\newtheorem{theo}{Theorem}[section]
\newtheorem{lemma}[theo]{Lemma}
\newtheorem{assume}[theo]{Assumption}
\newtheorem{nota}[theo]{Notation}
\newtheorem{propo}[theo]{Proposition}
\newtheorem{defi}[theo]{Definition}
\newtheorem{coro}[theo]{Corollary}
\newtheorem{rem}[theo]{Remark}
\newtheorem{exams}[theo]{Examples}
\newtheorem{exam}[theo]{Example}
\newcommand\card{\operatorname{card}}
\newcommand\Inj{\operatorname{Inj}}
\newcommand\Norm{\operatorname{\bf Norm}}
\newcommand\PNorm{\operatorname{\bf PNorm}}
\newcommand\iso{\operatorname{iso}}
\newcommand\Ord{\operatorname{\bf Ord}} 
\newcommand\Gra{\operatorname{\bf Gra}}
\newcommand\op{\operatorname{op}}
\newcommand\cell{\operatorname{cell}}
\newcommand\ap{\operatorname{ap}}
\newcommand\id{\operatorname{id}}
\newcommand\Set{\operatorname{\bf Set}}
\newcommand\Met{\operatorname{\bf Met}}
\newcommand\CMet{\operatorname{\bf CMet}}
\newcommand\PMet{\operatorname{\bf PMet}}
\newcommand\Ban{\operatorname{\bf Ban}}
\newcommand\colim{\operatorname{colim}}
\newcommand\cd{\mathcal {D}}
\newcommand\ch{\mathcal {H}}
\newcommand\ce{\mathcal {E}}
\newcommand\ck{\mathcal {K}}
\newcommand\cl{\mathcal {L}}
\newcommand\cm{\mathcal {M}}
\newcommand\cw{\mathcal {W}}
\newcommand\cx{\mathcal {X}}
\newcommand\eps{\varepsilon}
\newcommand\pa{\parallel}
\date{October 12, 2021}
\begin{document}
\title[Approximate injectivity and smallness in metric-enriched categories]
{Approximate injectivity and smallness in metric-enriched categories}
\author[J. Ad\'amek, and J. Rosick\'{y}]
{J. Ad\'amek, and J. Rosick\'{y}}
\thanks{Supported by the Grant Agency of the Czech Republic under the grant 
               19-00902S} 
\address{
\newline J. Ad\'amek\newline
Department of Mathematics,\newline 
Faculty of Electrical Engineering,\newline
Czech Technical University in Prague,\newline
Czech Republic\newline
and Department of Theoretical Computer Science,\newline
Technical University Braunschweig,\newline
Germany}
\email{j.adamek@tu-bs.de}

\address{\newline J. Rosick\'{y}\newline
Department of Mathematics and Statistics,\newline
Masaryk University, Faculty of Sciences,\newline
Kotl\'{a}\v{r}sk\'{a} 2, 611 37 Brno,\newline
 Czech Republic}
\email{rosicky@math.muni.cz}

\begin{abstract}
Properties of categories enriched over the category of metric spaces are investigated and applied to a study of well-known constructions of metric and Banach spaces. We prove e.g. that weighted limits and colimits exist in a metric-enriched category iff
ordinary limits and colimits exist and $\eps$-(co)equalizers are given
by $\eps$-(co)isometries for all $\eps$.

An object is called
approximately injective w.r.t. a morphism $h:A \to A'$ iff
morphisms from $A$ into it are arbitrarily close to those morphisms that
factorize through $h$. We investigate classes of objects specified
by their approximate injectivity w.r.t. given morphisms. They are
called approximate-injectivity classes.  And we also study,
conversely, classes of morphisms specified by the property that
certain objects are approximately injective w.r.t. them.

For every class of morphisms satisfying a mild
smallness condition we prove that the corresponding approximate-injectivity class
is weakly reflective, and we study the properties of the reflection
morphisms. As an application we present a new categorical proof of the
essential uniqueness of the Gurarii space.

\end{abstract} 
\keywords{}
\subjclass{}

\maketitle

\section{Introduction}
Categories enriched over $\Met$, the category of metric spaces and nonexpanding maps, play an important role in various realms, e.g.,
in the study of quantitative algebras \cite{MPP}, in continuous logic \cite{HH} and in a related theory of approximate Fra\"iss\'e
limits \cite{BY}, \cite{K1} and \cite{L}. These papers have led to a general theory of approximate injectivity developed in \cite{RT} and applied e.g. to a categorical proof of the existence of the Gurarii space, see \cite{Gu,GK} (Gurarii space is recalled in \ref{gur}(4) below). 

Recall that injectivity of objects w.r.t a morphism $h:A\to A'$ is one of the fundamental categorical concepts in algebra and topology: 
an object $K$ is called injective if every morphism $f: A\to K$ is equal to $f'\cdot h$ for some $f': A'\to K$. In $\Met$-enriched 
categories, $K$ is called \textit {approximately injective} if every morphism $f:A\to K$ has arbitrarily small distances from the morphisms of the form $f'\cdot h$, see \cite{RT}. 
We present applications of this concept for which we need $\eps$-pushouts introduced in \cite{RT} but previously considered in special cases in \cite{L}, \cite{G}, or \cite{GK}. An $\eps$-pushout of a span of morphisms is a square
universal among squares commuting up to $\eps$, see \ref{D:push}. 
Similarly, we work with $\eps$-coequalizers
and, in general, $\eps$-colimits introduced in \cite{RT}. We show that they are weighted colimits
in the sense of enriched category theory. In fact, we prove that a metric enriched category has weighted limits and colimits
iff it has ordinary ones and $\eps$-equalizers exist and are formed by isometries (introduced in \cite{RT}) and dually. We also show that isometries  lead to an important factorization system on $\Met$-enriched categories. Another useful concept is that of an $\eps$-isometry introduced in \cite{K1} which form the $\eps$-cancellable closure of isometries. Their role for approximate Fra\"iss\'e limits
was established in \cite{K1} and we transfer it to approximate injectivity.

Our main examples of $\Met$-enriched categories are $\Met$ itself, its full subcategory $\CMet$ on complete metric spaces, and the category $\Ban$ of (real or complex) Banach spaces and linear maps of norm $\leq 1$.  

Approximate injectivity classes, i.e. classes of objects approximately injective to a class $\ch$ of morphisms, were studied in \cite{RT}. In the present paper, we develop this theory further, e.g. we investigate classes of morphisms consisting, for a given class $\cx$ of objects, of precisely those morphisms for which all objects of $\cx$ are approximately injective. In case of injectivity, these classes are closed under transfinite composites, and they are stable under pushout and are (left) cancellable. In the approximate injectivity case, pushouts are replaced by $\eps$-pushouts and  cancellability by approximate cancellability.  We also develop an approximate small-object argument corresponding to the small-object argument in case of injectivity. The latter constructs weak reflections in the full subcategory of injective objects, which is important in homotopy theory where it yields e.g. fibrant replacements. Our approximate small-object argument provides weak reflections for the full subcategory of approximately injective objects. For this purpose, we introduce approximately small objects ensuring the convergence of the small-object argument. We prove that finite-dimensional Banach spaces are approximately small w.r.t. isometries and apply it to a new categorical proof of the essential uniqueness of the Gurarii space.

\vskip 1mm
\noindent
{\bf Acknowledgement.} 
We are grateful to W. Kubi\'s for valuable discussions about Banach spaces and 
to I. Di Liberti and J. Velebil for their help with understanding $\eps$-colimits as weighted colimits.

\section{Metric spaces}
We denote by $\Met$ the category of (generalized) metric spaces and nonexpanding maps. A metric is a function from $X\times X$ to 
$[0, \infty]$ (distance $\infty$ is allowed, therefore the proper name would be `generalized metric') satisfying the usual axioms. Given  metric spaces $C$ and $C'$, a function $f\colon C \to C'$ is \textit{nonexpanding} iff for all $x$, $y\in C$ we have 
$$
d(x,y) \geq d\big( f(x), f(y)\big)\,.
$$

In case that for all $x$, $y\in C$ we have
$$
d(x,y) 
= d\big( f(x), f(y)\big)\,,
$$
$f$ is called an \textit{isometry}.

\begin{nota}\label{N:Met}
{
\em
For every real number $\varepsilon\geq 0$ we write
$$
x\sim_\varepsilon y \quad \mbox{instead of}\quad d(x,y)\leq \varepsilon\,.
$$
We use the letter $\varepsilon$ to denote a (variable) real number $\geq 0$.
}
\end{nota}

\begin{rem}\label{R:Met}
\em 
{
(1) The category $\Met$ is symmetric monoidal closed w.r.t.\ $A\otimes B$ having the underlying set $A\times B$ and the metric
$$
d\big((a,b), (a', b')\big) = d(a, a') + d(b, b')\,.
$$
Here $[A,B]$ is the set of all nonexpanding maps with the metric 
$$
d(f,g) = \sup_{\alpha\in A} d\big(f(a), g(a)\big)\,.
$$

(2) Let $\lambda$ be a regular cardinal. Recall that an object $A$ of a category $\ck$ is \textit{$\lambda$-presentable} if its hom-functor $\ck(A,-):\ck\to\Set$ preserves $\lambda$-directed colimits (see \cite{AR}). That is, given a $\lambda$-directed diagram $D$ with colimit cocone $k_i:K_i \to K$ $(i \in I)$, for every morphism $f:A \to K$ 
\begin{enumerate}
\item[(a)] a factorization through $k_i$ exists for some $i\in I$, and

\item[(b)] given two factorizations $f',f'': A \to K_i$ of $f$, we can find a connecting morphism
$k_{i,j}: K_i \to K_j $ of $D$ merging them, i.e., $k_{i,j}f'=k_{i,j}f''$.
\end{enumerate}
A category $\ck$ is \textit{locally $\lambda$-presentable} if it is cocomplete and has a set of $\lambda$-presentable objects whose closure under $\lambda$-directed colimits is all of $\ck$.
Every such category is complete, wellpowered and cowellpowered, see \cite{AR}.
}
\end{rem}

\begin{exams}\label{ex}
{
\em
(1) $\Met$ is locally $\aleph_1$-presentable. Moreover, the forgetful functor $U:\Met\to\Set$ pre\-ser\-ves $\aleph_1$-directed colimits (see \cite{LR} 4.5(3)). In particular, $\Met$ is complete and cocomplete. $U$ has a left adjoint sending a set $X$ to the metric space on $X$ where all distinct points have distance $\infty$. These metric spaces are called \textit{discrete}.

(2) The subcategory $\CMet$ of all complete spaces is a full reflective subcategory of $\Met$ closed under $\aleph_1$-directed colimits. 
Thus it also is locally
$\aleph_1$-presentable, see \cite{AR}, 1.46.  $\CMet$ is also a symmetric monoidal closed category
w.r.t. the structure inherited from $\Met$.

(3) The category $\Ban$ of real (or complex) Banach spaces and linear maps of norm at most $1$ is locally $\aleph_1$-presentable (see \cite{AR} 1.48).

(4) Let $\PMet$ be the category of (generalized) pseudometric spaces and nonexpansive maps. (The  difference is just that for a pseudometric we do not  require $d(x,y)> 0$ if $x\ne y$.) Following \cite{LR} 4.5(3), $\PMet$ is locally $\aleph_1$-presentable.
Moreover, the forgetful functor from $\PMet$ to $\Set$ is topological, see \cite{AHS}, 21.8(1). Indeed, given  pseudometric spaces 
$K_i$, $i\in I$, and a cocone in $\Set$, $f_i\colon K_i \to K$ ($i\in I$), the following  pseudometric $d$ on the set $K$ is  the final one making all $f_i$ nonexpanding:
\[
\tag{$\ast$}d(x,y) =\inf \sum_{k=1}^n r_k\,.
\]
The infimum ranges over $n$-tuples of pairs $u_k$, $v_k\in K_{i_k}$ (for $i_1, \dots , i_n \in I$) of distance $r_k$ such that 
$x= f_{i_1}(u_1)$, $y=f_{i_n}(v_n)$ and for all $k=1, \dots , n-1$ we have $f_{i_k}(v_k) = f_{i_{k+1}} (u_{k+1})$.

The full subcategory $\Met$ of $\PMet$ is reflective: the reflection of a pseudometric space C is its metric quotient
$$q_K:K \to K/\cong 
$$
where $x \cong y$ iff $d(x,y)=0.$
}
\end{exams}
The category $\Met$ is complete and cocomplete and directed colimits are given as follows.

\begin{lemma}\label{filtered}
Let $(k_{ij}:K_i\to K_j)_{i\leq j\in I}$ be a directed diagram in $\Met$ with a colimit $(k_i:K_i\to K)_{i\in I}$. Then 
\begin{enumerate}
\item $k_i$ are jointly surjective: $K=\bigcup\limits_{i\in I} k_i [K_i]$, and
\item for every $i\in I$ and every pair $x$, $y \in K_i$  
$$
d(k_i(x),k_i(y))=\inf_{j\geq i}d(k_{ij}(x),k_{ij}(y)).
$$
\end{enumerate} 
\end{lemma}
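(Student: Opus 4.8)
The plan is to reduce everything to the category $\PMet$, where colimits are completely explicit because the forgetful functor to $\Set$ is topological (Examples \ref{ex}(4)). First I would form the colimit of the given diagram in $\PMet$: by topologicity its underlying set is the colimit $(p_i\colon K_i\to P)_{i\in I}$ of the diagram in $\Set$, and its pseudometric is the final one, given by the formula $(\ast)$ of \ref{ex}(4). Since $\Met$ is reflective in $\PMet$ with reflector the metric quotient $q_P\colon P\to P/{\cong}$, the colimit of the diagram in $\Met$ is obtained by applying this reflector; as colimit cocones are unique up to isomorphism it suffices to verify (1) and (2) for the particular cocone $k_i=q_P\cdot p_i\colon K_i\to P/{\cong}$.

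Statement (1) is then immediate. The diagram is directed, so the cocone $(p_i)$ in $\Set$ is jointly surjective (every element of a directed colimit in $\Set$ lies in the image of some $p_i$), and $q_P$ is surjective; hence the composites $k_i=q_P\cdot p_i$ are jointly surjective as well.

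For statement (2) I would use that the metric quotient preserves distances, $d\big(q_P(u),q_P(v)\big)=d(u,v)$ for $u,v\in P$, so that $d\big(k_i(x),k_i(y)\big)$ is precisely the value of the final pseudometric $(\ast)$ at the pair $p_i(x),p_i(y)$. The inequality $d\big(k_i(x),k_i(y)\big)\le d\big(k_{ij}(x),k_{ij}(y)\big)$ for every $j\ge i$ is clear from $k_i=k_j\cdot k_{ij}$ and non-expansiveness of $k_j$, giving $d\big(k_i(x),k_i(y)\big)\le\inf_{j\ge i}d\big(k_{ij}(x),k_{ij}(y)\big)$. For the reverse inequality, take any chain witnessing a value of $(\ast)$ at $p_i(x),p_i(y)$: pairs $u_k,v_k\in K_{i_k}$ of distance $r_k$ $(k=1,\dots,n)$ with $p_i(x)=p_{i_1}(u_1)$, $p_i(y)=p_{i_n}(v_n)$ and $p_{i_k}(v_k)=p_{i_{k+1}}(u_{k+1})$ for $k<n$. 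Using directedness of $I$ and the description of equality in a directed colimit in $\Set$, choose a single index $l\ge i,i_1,\dots,i_n$ large enough that all these finitely many equalities already hold after applying the connecting maps into $K_l$. Then the triangle inequality in $K_l$, applied along the points $k_{i_1l}(u_1),k_{i_1l}(v_1)=k_{i_2l}(u_2),\dots,k_{i_nl}(v_n)$, together with non-expansiveness of the connecting maps, gives $d\big(k_{il}(x),k_{il}(y)\big)\le\sum_{k=1}^n r_k$, hence $\inf_{j\ge i}d\big(k_{ij}(x),k_{ij}(y)\big)\le\sum_{k=1}^n r_k$. Taking the infimum over all such chains yields $\inf_{j\ge i}d\big(k_{ij}(x),k_{ij}(y)\big)\le d\big(k_i(x),k_i(y)\big)$, completing the proof.

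The only mildly delicate point is this last step: one must invoke that two elements of a directed colimit in $\Set$ are identified exactly when some connecting map already identifies them, and then absorb the entire finite chain into a single stage $K_l$ by choosing $l$ far enough along. After that, the estimate is a routine triangle-inequality computation, and I expect no further obstacle.
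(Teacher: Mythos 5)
Your proposal follows essentially the same route as the paper: compute the colimit in $\PMet$ via the final pseudometric $(\ast)$ of Example \ref{ex}(4) and then apply the metric-quotient reflector. The paper simply asserts that directedness reduces $(\ast)$ to the infimum formula in (2); your chain-absorption argument (pushing the finite witnessing chain into a single stage $K_l$ and applying the triangle inequality there) correctly fills in exactly that omitted step.
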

\begin{proof}
Let $\bar k_i \colon UK_i\to \bar K$ ($i\in I$) be the colimit of $U(k_{ij})$ in $\Set$. 
Denote by $\bar d$ the final pseudometric on $\bar K$,  see \ref{ex}(4). For this pseudometric space the cocone 
$\bar k_i\colon K_i\to \bar K$ ($i\in I$) is a colimit in $\PMet$. 
If $q\colon \bar K \to K$ denotes the metric quotient of $\bar K$, then the cocone
$$
k_i = q \cdot \bar k_i \colon K_i \to K
$$
is a colimit in $\Met$, see \ref{ex}(4).

Property (1) is clear. To verify (2) since our diagram is directed, $(\ast)$ from \ref{ex}(4) reduces to
$$
\bar d(\bar k_i(x),\bar k_i(y))=\inf_{j\geq i}d(k_{ij}(x),k_{ij}(y)).
$$
Thus (2) follows from
$$
d(k_i(x), k_i(y)) = \bar d(\bar k_i(x), \bar k_i(y)).
$$
\end{proof}  

\begin{rem}\label{directed}
{
\em
(1) For a $\lambda$-directed diagram with $\lambda$ uncountable, condition (2) can be strengthened to
 \begin{enumerate}
  \item[(2')] for every subset $M$ of $K_i$ of power less than $\lambda$ there exists a connecting  map $k_{ij} \colon K_i  \to K_j$ of $D$ such that for all $x,y$  in $M$we have
$$
d \big(k_{ij}(x), k_{ij}(y)\big) = d\big( k_i(x), k_i(y)\big).
$$
\end{enumerate}
(2) For every directed diagram of isometries $k_{ij}$, all $k_i$ are isometries. 

(3) Directed colimits in $\CMet$ are completions of those in $\Met$.

(4) Analogously, directed colimits in $\Ban$ are completions of those in the category $\Norm$ of normed vector spaces and linear maps
of norm $\leq 1$. Those colimits are described analogously to \ref{filtered} with (2) replaced by  
$$
\pa k_i(x)\pa=\inf_{j\geq i}\pa k_{ij}(x)\pa.
$$

The verification is analogous to the above Lemma: use
the category $\PNorm$ of pseudonormed spaces and linear maps of norm $\leq 1$. (The difference is that nonzero vectors can have norm $0$.)  This category is topological over the category of vector spaces and linear maps, and $\Norm$ is reflective in $\PNorm$ with reflections given by cokernel modulo the subspace of all vectors of norm $0$.
}
\end{rem}
 
\begin{lemma}\label{pres}
For an uncountable regular cardinal $\lambda$, a metric space is $\lambda$-presentable in $\Met$ iff it has a cardinality less than 
$\lambda$.
\end{lemma}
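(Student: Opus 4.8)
The plan is to deduce both implications directly from the explicit description of directed colimits in $\Met$ recorded in Lemma~\ref{filtered} and Remark~\ref{directed}: in one direction I would check that $\Met(A,-)$ preserves $\lambda$-directed colimits when $\card A<\lambda$, and in the other I would exhibit $A$ as a $\lambda$-directed colimit of its subspaces of cardinality $<\lambda$ and feed $\id_A$ into the factorization property.

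For sufficiency, fix a metric space $A$ with $\card A<\lambda$ and a $\lambda$-directed diagram $(k_{ij}\colon K_i\to K_j)$ with colimit cocone $(k_i\colon K_i\to K)$; I would verify conditions (a) and (b) of Remark~\ref{R:Met}(2). For (a), given $f\colon A\to K$, joint surjectivity (Lemma~\ref{filtered}(1)) lets me pick for each $a\in A$ an index $i_a$ and a point $x_a\in K_{i_a}$ with $k_{i_a}(x_a)=f(a)$; since $\card A<\lambda$ and the diagram is $\lambda$-directed there is an upper bound $i$ of all the $i_a$, so the points $y_a:=k_{i_a,i}(x_a)$ satisfy $k_i(y_a)=f(a)$. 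The set-map $a\mapsto y_a$ need not be nonexpanding, but condition~(2') of Remark~\ref{directed}(1), applied to $M=\{y_a:a\in A\}$ (of power $<\lambda$), produces $j\geq i$ with $d(k_{ij}(y_a),k_{ij}(y_b))=d(k_i(y_a),k_i(y_b))=d(f(a),f(b))\leq d(a,b)$; thus $a\mapsto k_{ij}(y_a)$ is a nonexpanding factorization of $f$ through $k_j$. For (b), given $f',f''\colon A\to K_i$ with $k_if'=k_if''$, Lemma~\ref{filtered}(2) gives $\inf_{j\geq i}d(k_{ij}(f'(a)),k_{ij}(f''(a)))=0$ for every $a$; applying~(2') to $M=f'[A]\cup f''[A]$ yields a single $j\geq i$ with $d(k_{ij}(f'(a)),k_{ij}(f''(a)))=0$, hence $k_{ij}(f'(a))=k_{ij}(f''(a))$ for all $a$ because $K_j$ is a genuine metric space, so $k_{ij}f'=k_{ij}f''$.

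For necessity, I would order the subspaces of $A$ of cardinality $<\lambda$ by inclusion. Regularity of $\lambda$ makes this family $\lambda$-directed, and the inclusion maps form a directed diagram of isometries whose colimit in $\Met$ is $A$ itself: by Lemma~\ref{filtered}(1) the colimit cocone is jointly surjective with union $A$, and by Remark~\ref{directed}(2) its legs are isometries, so the canonical comparison map onto $A$ is an isometric bijection and the cocone maps are the inclusions. Then $\lambda$-presentability of $A$, via condition (a) applied to $\id_A\colon A\to A=\colim$, produces a subspace $A_s$ with $\card A_s<\lambda$ and a morphism $g\colon A\to A_s$ with $\iota_s\cdot g=\id_A$, where $\iota_s\colon A_s\hookrightarrow A$ is the inclusion; but then $a=\iota_s(g(a))=g(a)\in A_s$ for all $a\in A$, so $A=A_s$ and $\card A<\lambda$.

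I expect the only genuinely delicate point to be the step in the sufficiency argument of upgrading a purely set-theoretic factorization (resp. equalization) to a nonexpanding one living inside a single stage $K_j$: this is exactly where uncountability of $\lambda$ is used, through condition~(2') of Remark~\ref{directed}(1), and indeed the equivalence fails for $\lambda=\aleph_0$ (a two-point space with nonzero distance is not finitely presentable). Everything else is routine bookkeeping with the colimit formula of Lemma~\ref{filtered}.
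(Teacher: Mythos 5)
Your proof is correct and follows essentially the same route as the paper: the necessity direction is the paper's argument verbatim (factor $\id_A$ through the $\lambda$-directed colimit of small subspaces), and the sufficiency direction likewise first produces a set-level factorization from the explicit colimit description and then upgrades it to a nonexpanding one via condition (2') of Remark \ref{directed}(1). The only difference is that you also spell out the uniqueness condition (b), which the paper leaves implicit.
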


\begin{proof}
Every metric space is a $\lambda$-directed colimit of its subspaces of cardinality less than $\lambda$.
If $A$ is $\lambda$-presentable, the identity $\id_A:A\to A$ factorizes through one of these subspaces and thus $A$ has cardinality
less than $\lambda$. Conversely, let $A$ has cardinality less than $\lambda$ and let $k_i:K_i\to K$ be a $\lambda$-directed colimit of metric spaces $K_i$, $i\in I$, with connecting mappings $k_{ij}:K_i\to K_j$ for $i<j\in I$. Let $f:A\to K$ be a morphism. Since $U$
preserves $\lambda$-directed colimits, there is a mapping $f':A\to K_i$ such that $k_if'=f$. Given $a,b\in A$ then, following
\ref{directed}(1), $d(fa,fb)=d(k_{ij}fa,k_{ij}fb)$ for some $i<j\in I$. Since $A$ has less than $\lambda$ elements, there is $i<j\in I$
such that $k_{ij}f'$ is nonexpanding. Hence $A$ is $\lambda$-presentable.

\end{proof}

\begin{rem}\label{empty}
{
\em
(1) The only $\aleph_0$-presentable object in $\Met$ or $\CMet$ is the empty space. Indeed,
let $2_{\eps}$ be the two-element space with distance $\eps$ between the elements. The chain   
$$
2_1 \xrightarrow{\ \id\ } 2_{\frac{1}{2}} \xrightarrow{\ \id \ } 2_{\frac{1}{3}} \cdots
$$
has the one-point space $1$ as a colimit. Given a nonempty space $A$, the two distinct constant morphisms $f_1,f_2:A\to 2_1$ are not identified by any $\id:2_1\to 2_{\frac{1}{n}}$. Hence $A$ is not $\aleph_0$-presentable.

Analogously, the only $\aleph_0$-presentable object in $\Ban$ is the trivial space $0$: for every space $A$
use the sequence of spaces $A_n$ obtained from $A$ by dividing the norm by $1/n$, whose colimit is $0$.

(2) For $\lambda$ uncountable, $\lambda$-presentable objects in $\CMet$ are precisely complete metric
spaces of density character less than $\lambda$, i.e., those having a dense subset of cardinality less that $\lambda$ (see 
\cite{LR}). Similarly, $\lambda$-presentable objects in $\Ban$ are precisely the Banach spaces of density character less than $\lambda$.
}
\end{rem}

\section{$\Met$-enriched categories}

We consider categories enriched over the symmetric monoidal closed category $\Met$. Every such category $\ck$ has its underlying category $\ck_0$ and a metric is given on every hom-set $\ck_0(A,B)$ such that composition is nonexpanding. 

\begin{rem} \label{conical}
{
\em
We have to distinguish limits in $\ck_0$ from conical limits in $\ck$. The latter are those limits in $\ck_0$ which have a \textit{collectively isometric} limit cone. This means
a limit cone $p_i\colon P\to P_i$ \ ($i\in I$) such that given a parallel pair $u,v$
$$
\xymatrix@=3pc{
K\ar@<0.5ex>[r]^{u}
\ar@<-0.5ex>[r]_{v}& P\ar[d]^{p_i}\\
& P_i
}
$$
we have $d(u,v)= \sup\limits_{i\in I} d(p_iu, p_iv)$. As we see in Section 4 below these are precisely the weighted limits with the trivial weight. Observe that every object $K$ of $\ck$ yields
an obvious functor $\ck(K,-) \colon \ck_0\to \Met$ and that conical limits are precisely
those that each such hom-funktor preserves.

Analogously for conical colimits: this means that the colimit cocone $c_i:C_i\to C$ $(i\in I)$ fulfils for parallel pairs
$u,v:C\to K$ that $d(u,v)=\inf\limits_{i\in I}d(uc_i,vc_i)$.
}
\end{rem}

\begin{exam}
{
\em
The categories $\Met, \CMet$ and $\Ban$ have conical limits and colimits, see \ref{T:all}.
}
\end{exam}

\begin{defi}[\cite{RT}]\label{D:iso}
{
\em
 A morphism $f\colon K\to L$ is called  an \textit{isometry} if for every parallel pair 
$u,v \colon Q\to K$ we have
$$
d(u,v) = d(fu, fv)\,.
$$

Dually, $f$ is called a \textit{coisometry}  if $d(uf,vf)=d(u,v)$ for all $u,v:L\to Q$.
}
\end{defi}

\begin{exam}
{
\em
In $\Met$ isometries precisely represent inclusions of subspaces. 

Coisometries are precisely the morphisms with dense image. Indeed, every such morphism is clearly a coisometry. Conversely, let $f:A\to B$ be a coisometry and decompose it as $A\to \overline{f[A]}\to B$. Assume that $\overline{f[A]}\neq B$. Form a pushout
$$
\xymatrix@=2pc{
	A \ar [r]^{}\ar[d]_{} & \overline{f[A]} \ar[d]^{ v}\\
	\overline{f[A]}  \ar [r]_{{u}}& C
}
$$ 
This pushout replaces every element in $B\setminus\overline{f[A]}$ by a pair of points having distance $\infty$ (for this,
one needs that $\overline{f[A]}$ is closed in $B$). Then $uf=vf$ but $d(u,v)=\infty$, which is not possible.
}
\end{exam} 

\begin{rem}\label{isometry}
{\em
(1) Every isometry is a monomorphism.

(2) Let $\ck_0$ be an ordinary category and enrich it trivially over $\Met$ by putting $d(f,g)=\infty$ iff $f\neq g$. Then every monomorphism
in $\ck_0$ is an isometry in $\ck$. Thus, isometries need not be regular monomorphisms.

(3) A composition of two isometries is an isometry.

(4) Isometries are \textit{left cancellable}, i.e., if $gf$ is an isometry, then so is $f$. In fact, for
a parallel pair $u,v$ we have $d(fu,fv)\leq d(u,v)$. On the other hand, $d(u,v)=d(gfu,gfv)\leq d(fu,fv)$.

(5) $f:K\to L$ is an isometry iff $\ck(Q,f)$ is an isometry in $\Met$ for every $Q$ in $\ck$.

(6) Since $f$ is a coisometry in $\ck$ iff it is an isometry in $\ck^{\op}$, we have dual statements for coisometries.
}
\end{rem}

\begin{nota}\label{N:eps}
	{
		\em
		(1) For parallel morphisms $f$, $g\colon X\to Y$ we write $f\sim_\varepsilon g$ if their distance in $\ck(X,Y)$ is at most $\varepsilon$. (This corresponds well with \ref{N:Met})
		
		Analogously, we denote situations with $f\sim_{\varepsilon} g_2 \cdot g_1$  by triangles as follows
		$$
		\xymatrix@=1pc{
			\ar[rr]^f \ar[ddr]_{g_1}&&\\
			& \sim_\varepsilon&\\
			& \ar[uur]_{g_2}
		}
		$$
		
		(2) Note that $f\sim_{\eps} g$ in $\Ban$ iff $\parallel f - g\parallel\leq\eps$, which means that $\parallel fx - gx\parallel\leq\eps$
		for all $x\in X$, $\parallel x\parallel\leq 1$.
	}
\end{nota}

\begin{defi}[see \cite{RT}]\label{D:push}
	{\em 
	(1)	By an \textit{$\varepsilon$-pushout} of morphisms $f_i\colon A\to B_i$ ($i=1,2$) is meant a universal pair of morphisms
		$g_i\colon B_i \to C$ ($i=1,2$) with $g_1 f_1 \sim_\varepsilon g_2 f_2$.
 
		Universality is  in the usual (strict) sense: for every other such square $g_1' f_1 \sim_\varepsilon g_2' f_2$ (where 
		$g_i'\colon B_i\to C'$) there exists a unique $h\colon C\to C'$ with $g_1'=h\cdot g_1$ and $g_2'= h\cdot g_2$.
		
		(2) Analogously, an \textit{$\varepsilon$-coequalizer} of a parallel pair $f_1$, $f_2$ is a (strictly) universal morphism $c$ w.r.t.  
		$cf_1 \sim_\varepsilon f_2$.
That is every morphism $c'$ with $c'f_1\sim_\eps c'f_2$ uniquely factorizes through $c$.
		
		(3) The dual concepts are $\varepsilon$-pullback of a cospan and $\varepsilon$-equalizer of a parallel pair.	
}

\end{defi}

\begin{rem}\label{R:coeq}
{
\em
Every $\eps$-coequalizer is an epimorphism. This follows from the universal property.

It need not be a regular epimorphism. Denote by $2_\eps$ the two-point metric space with points of distance 
$\eps$. The $\eps/2$-coequalizer of the two points $p_1,p_2: 1 \to 2_\eps$ is the identity map from $2_\eps$
to $2_\eps/2$.
}
\end{rem}

\begin{exam}\label{weighted}
	{
		\em
$\Met$ has $\eps$-pushouts,   \cite[Lemma 2.3]{RT}. $\varepsilon$-pullbacks in $\Met$  are easy to construct:
		given morphisms $u,v$, form the following square
		$$
		\xymatrix@=2pc{
			D \ar [r]^{\bar u}\ar[d]_{\bar v} & C \ar[d]^{ v}\\
			B  \ar [r]_{{u}}& A
		}
		$$ 
		where $D$ is the subspace of $B\times C$ consisting of all pairs $(b,c)$ such that $d(ub,vc)\leq\varepsilon$ and $\bar u$, $\bar v$ are the projections.
	}
\end{exam}

\begin{defi} \label{D:equ}
{
\em
We say that $\ck$ \textit{has isometric  $\eps$-equalizers} if for every parallel pair an $\eps$-equalizer exists and is formed by an isometry.
	
Dually: $\ck$ has \textit{coisometric $\eps$-coequalizers}.
}
\end{defi}

\begin{exam}
{
\em
$\Met$  has isometric $\eps$-equalizers and coisometric $\eps$-coequalizers. Also $\CMet$ and $\Ban$ have this property.
This follows from \ref{T:all} and \ref{weighted1} below.
}
\end{exam}
	
\begin{defi}\label{D:extr}
		{
			\em
			A morphism $f\colon K\to L$ is called an \textit{isometry-extremal epimorphism} if every isometry $M\to L$ through which
			$f$ factorizes is an isomorphism.
			
			Dually, $f\colon K\to L$ is called an \textit{coisometry-extremal monomorphism} if every co\-iso\-met\-ry $K\to M$ through which
			$f$ factorizes is an isomorphism.
		}
	\end{defi}
	
\begin{rem}\label{iso-ext}
{
\em
(1)	If $\ck$ has conical equalizers, every isometry-extremal epimorphism $f:K\to L$ is an epimorphism. Indeed, assume that $uf=vf$ and let $e$ be an equalizer of $u$ and $v$. Since $e$ is an isometry, it is an isomorphism and thus $u=v$.
			
Dually, coisometry-extremal monomorphisms are monomorphisms if coequalizers are conical.

(2) If $\ck$ has finite conical products and isometric $\eps$-equalizers,
	then it has $\eps$-pullbacks which, moreover, are jointly isometric.
	This follows from the classical construction of a pullback of morphisms $f_i:
	A_i \to B$ for $i=1,2$  as an equalizer of the pair $f_1\pi_1,f_2\pi_2 : A_1
	\times A_2 \to B$. This
	applies to $\eps$-pulbacks, too,  whenever the product $A_1 \times A_2$
	with projections $\pi_i$ is conical.
	
(3) If $\ck$ has isometric $\eps$-equalizers for every $\eps>0$, then isometry-extremal epimorphisms are coisometries. Indeed,
 	let $f:K\to L$ be an isometry-extremal epimorphism and $u,v:L\to Q$. Let $e:E\to L$ be an $\eps$-equalizer of $u,v$ for
 	$\eps = d(uf,vf)$. Since $f$ factorizes through $e$ and $e$ is an isometry, $e$ is an isomorphism. Hence $d(u,v)=\eps$.
	
		}
	\end{rem}

		\begin{theo} \label{L:well} If $\ck_0$ is wellpowered and $\ck$ has conical limits, then
		it has the factorization system $(\ce,\cm)$ with $\cm 
			= \mbox{isometries}$ and $\ce=\mbox{isometry-extremal epimorphisms}$.
		\end{theo}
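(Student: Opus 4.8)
The plan is to verify the two standard axioms of a (weak) factorization system — or rather the orthogonal factorization system — namely (i) every morphism factors as an isometry-extremal epimorphism followed by an isometry, and (ii) the diagonal-fill-in / unique lifting property holds between $\ce$ and $\cm$. Both classes obviously contain all isomorphisms and are closed under composition (for $\cm$ this is \ref{isometry}(3); for $\ce$ it is routine from the definition), so the only real content is the factorization and the fill-in.

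For the factorization of a morphism $f\colon K\to L$, the natural move is a "smallest subobject through which $f$ factors" argument, which is where wellpoweredness of $\ck_0$ and the existence of conical limits enter. First I would consider the (possibly large a priori, but small by wellpoweredness) family of all isometries $m_i\colon M_i\to L$ through which $f$ factors, say $f=m_i e_i$; this family is nonempty since $\id_L$ qualifies. Take the conical (wide) pullback / intersection $m\colon M\to L$ of all the $m_i$ — conical limits exist by hypothesis, and an intersection of isometries is again an isometry because isometries are stable under conical pullback (this follows from \ref{isometry}(5): each $\ck(Q,m_i)$ is an isometry in $\Met$, i.e. an isometric embedding, and an intersection of isometric embeddings in $\Met$ is an isometric embedding, so $\ck(Q,m)$ is an isometry for all $Q$). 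The compatible family $(e_i)$ induces $e\colon K\to M$ with $m e = f$. Then $m$ is an isometry by construction, and I claim $e$ is isometry-extremal: if $e$ factors through an isometry $n\colon N\to M$, then $mn\colon N\to L$ is an isometry (composite of isometries, \ref{isometry}(3)) through which $f$ factors, hence $mn$ is one of the $m_i$, hence $m$ factors through $mn$, i.e. there is $s\colon M\to N$ with $mns=m$; since $m$ is a monomorphism (\ref{isometry}(1)) this gives $ns=\id_M$, and $sn=\id_N$ follows because $n$ is monic and $n(sn)=(ns)n=n$. So $n$ is an isomorphism, as required.

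For the orthogonality, suppose we have a commuting square with an isometry-extremal epi $e\colon A\to B$ on the left, an isometry $m\colon C\to D$ on the right, and $v e = m u$ for $u\colon A\to C$, $v\colon B\to D$. Uniqueness of a diagonal is immediate: $e$ is an epimorphism (here I use \ref{iso-ext}(1): $\ck$ has conical limits, hence conical equalizers, so isometry-extremal epis are epic) and $m$ is a monomorphism, so any two fillers agree after precomposition with $e$ and hence are equal (or agree after postcomposition with $m$). For existence, form the $(\ce,\cm)$-type factorization just constructed applied to the pullback: take the conical pullback $P$ of $v$ along $m$, with projections $p\colon P\to B$ and $q\colon P\to C$ and $m p' = \dots$; since $m$ is an isometry, the pullback projection $p\colon P\to B$ is again an isometry (conical pullback stability of isometries, as above). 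The square $v e = m u$ induces $w\colon A\to P$ with $p w = e$ and $q w = u$. Now $e = p w$ with $p$ an isometry and $e$ isometry-extremal forces $p$ to be an isomorphism; then $d := q p^{-1}\colon B\to C$ satisfies $m d = m q p^{-1} = v p p^{-1} = v$ and $d e = q p^{-1} e = q w = u$, so $d$ is the desired diagonal.

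The main obstacle I anticipate is the legitimacy and correctness of the "intersection of all isometric subobjects" step: one must be careful that wellpoweredness of $\ck_0$ genuinely bounds the family of isometry-subobjects (isometries are monomorphisms by \ref{isometry}(1), so they are subobjects in $\ck_0$, and wellpowered means only a set of them up to isomorphism), and one must confirm the conical-pullback-stability of isometries in the enriched setting — this is the one place where the enrichment, rather than just $\ck_0$, does real work, and it rests on translating via the hom-functors $\ck(Q,-)$ (\ref{isometry}(5)) to the concrete statement in $\Met$ that intersections and pullbacks of isometric embeddings are isometric embeddings. Everything else — closure under composition, the presence of enough epimorphisms via conical equalizers, and the two fill-in verifications — is then a formal consequence.
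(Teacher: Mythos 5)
Your proof is correct and follows essentially the same route as the paper: the factorization is obtained as the intersection (wide conical pullback) of all isometric subobjects through which the morphism factors, and the diagonal is produced by pulling the isometry back and using extremality to invert the resulting projection. The only cosmetic difference is that you justify the stability of isometries under intersections and pullbacks by transporting to $\Met$ along the hom-functors $\ck(Q,-)$, whereas the paper performs the equivalent distance computation directly from the conical-limit property.
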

		
		\begin{proof}
			(1) We first verify that an intersection of  isometries $f_i \colon X_i \to Y$ \ ($i\in I$) is an isometry. Indeed, since our category is wellpowered and isometries are monic, an intersection $f$ exists:
			$$ 
			\xymatrix@=3pc{
				X\ar[r]^{\pi_i} \ar[d]_{f} &X_i\ar[dl]^{f_i}\\
				Y &}  \quad (i\in I)
			$$
			Given $u_1$, $u_2\colon Z\to X$, we have, since limits are conical,
			$$
			d(u_1, u_2) = \sup\limits_{i\in I} d(\pi_i u_1, \pi_i u_2)
			$$
			and since $f_i$ is an isometry,
			$$
			d(\pi_i u_1, \pi_i u_2) = d(f_i \pi_i u_1, f_i\pi_i u_2) = d( fu_1, fu_2)\,.
			$$
			This proves $d(u_1, u_2) = d(fu_1, fu_2)$.
			\vskip 2mm
			(2) For every morphism $g\colon X\to Y$ let $m:Z\to Y$ be the  intersection of all isometries through which $g$ factorizes. Then $m$ is an isometry and $g= mh$ for a unique $h\colon X\to Z$. Then $h$ is an isometry-extremal epimorphism. Indeed, suppose $h=m_0k$ for some  isometry $m_0 \colon Z_0 \to Z$. Then $mm_0$ is also an isometry, and since $g$ factorizes through it $(g= mm_0k)$, we see that $m$ is a subobject of $mm_0$, hence, $m_0$ is invertible as required.
			
			\vskip 2mm
			(3) The diagonal fill-in property holds. Indeed, given a commutative square as follows 
			$$
			\xymatrix@=3pc{
				A\ar[r]^{e} \ar[d]_{f} & B\ar[d]^{g} \\
				X \ar[r]_{m} & Y
			}
			$$
			with $m$ an isometry and $e$ an isometry-extremal epimorphism, form the pullback $P$ of $m$ along $g$:
			$$
			\xymatrix{
				A\ar[rr]^{e} \ar[dd]_{f} \ar@{.>} [dr]&& B\ar[dd]^{g} \\
				& P \ar[ur]^{\bar m} \ar[dl]_{\bar g}&\\
				X \ar[rr]_{m} && Y
			}
			$$
			Then $\bar m$ is an isometry: given a parallel pair $u_1$, $u_2\colon Z\to P$, then since the pullback is conical, we have
			$$
			d(u_1, u_2) = \sup\{d(\bar g u_1, \bar g u_2),d(\bar m u_1, \bar m u_2)\}\,.
			$$
			Since $m$ is an isometry, we conclude
			$$
			d(\bar g u_1, \bar g u_2) = d(m\bar g u_1, m\bar g u_2)
			= d(g\bar m u_1, g\bar m u_2)
			\leq d(\bar m u_1, \bar m u_2)\,.
			$$
			This proves 
			$$
			d(u_1, u_2) = d(\bar m u_1, \bar m u_2)
			$$
			as required.
			
			Since $e$ is an isometry-extremal epimorphism which factorizes through $\bar m$ (using the universal property of $P$), we conclude that $\bar m$ is invertible.
			The desired  diagonal is 
			$$
			\bar g \cdot \bar m^{-1} \colon B\to X\,.
			$$
		\end{proof}
		
\begin{rem}\label{fact}
{
\em
(1) Dually, we have the (coisometry, isometry-extremal monomorphism) factorization system, whenever $\ck_0$ is cowellpowered and $\ck$ has conical colimits. 
				
(2) We have shown that isometry-extremal epimorphisms coincide with isometry-strong epimorphisms, i.e., with those having the diagonal
				fill-in property w.r.t. isometries. Dually, isometry-extremal monomorphisms coincide with isometry-strong mo\-no\-mor\-phisms.
			}
		\end{rem}
		
\begin{exams}\label{fact1}
{
\em
(1) Isometry-extremal epimorphisms in $\Met$ are precisely the surjective morphisms. Thus we have (surjective, isometry) factorizations. Coisometry-extremal momomorphisms are precisely the closed isometries, i.e., isometries with a closed image. Since coisometries are precisely the dense morphisms, the second factorization system is
(dense, closed isometry).

(2) In $\CMet$ and $\Ban$ the two factorization systems coincide: we get the (dense, isometry) factorization system.
}
\end{exams}

\begin{lemma}\label{L:epush}
If $\ck$ has $\eps$-coequalizers for every $\eps$, then every limit is conical.
\end{lemma}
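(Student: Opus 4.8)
The plan is to show that if $\ck$ has $\eps$-coequalizers for every $\eps$, then every limit cone is collectively isometric, which is exactly the definition of conical (see \ref{conical}). So let $L$ be a limit in $\ck_0$ with limit cone $p_i\colon L\to L_i$ ($i\in I$), and let $u,v\colon K\to L$ be an arbitrary parallel pair. We always have $d(p_iu,p_iv)\leq d(u,v)$ for each $i$ since composition is nonexpanding, hence $\sup_{i\in I}d(p_iu,p_iv)\leq d(u,v)$. The real content is the reverse inequality $d(u,v)\leq\sup_{i\in I}d(p_iu,p_iv)$.

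To prove this, set $\eps=\sup_{i\in I}d(p_iu,p_iv)$; I may assume $\eps<\infty$, since otherwise there is nothing to prove. First I would form an $\eps$-coequalizer $c\colon K\to K'$ of the pair $u,v$, which exists by hypothesis; by \ref{R:coeq} it is an epimorphism. The idea is to build, from $c$, a factorization of both $u$ and $v$ that will force $d(u,v)\leq\eps$. For each $i\in I$ we have $p_iu\sim_\eps p_iv$ by the choice of $\eps$, so by the universal property of the $\eps$-coequalizer $c$ there is a unique $q_i\colon K'\to L_i$ with $q_i c = p_iu$ (note $p_iv$ need not factor as $q_ic$ — this is the asymmetry to watch). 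The crucial step is to check that the family $(q_i)_{i\in I}$ is compatible with the limit cone, i.e. that it forms a cone over the same diagram whose limit is $L$: for a connecting morphism $\ell\colon L_i\to L_j$ of the diagram one computes $\ell q_i c = \ell p_i u = p_j u = q_j c$, and then cancels the epimorphism $c$ to get $\ell q_i = q_j$. Hence $(q_i)$ induces a unique $w\colon K'\to L$ with $p_i w = q_i$ for all $i$, and then $p_i (wc) = q_i c = p_i u$ for all $i$, so $wc = u$ by the uniqueness part of the limit property.

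Now the closing move: I would like to also produce $wc=v$, or more precisely to estimate $d(u,v)$ via $w$. Since $c$ is an $\eps$-coequalizer, applying its universal property symmetrically — or rather, re-running the argument with the roles designed so that the single map $w$ witnesses both — one gets $p_i(wc)=p_iu$ and wants $d(wc, v)\leq\eps$; indeed from $wc=u$ we reduce to showing $d(u,v)\le\eps$, which is circular as stated, so the honest version is: the defining inequality of the $\eps$-coequalizer gives $c u$-data only up to $\eps$, and the correct extraction is that $d(u,v)=d(p_i\text{-components})$ because $w$ realizes $u$ as factoring through $c$ exactly while $v$ factors through $c$ up to $\eps$ in each coordinate $p_i$, and $\sup_i$ of these coordinatewise $\eps$'s is $\eps$. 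Concretely: $d(u,v)=d(wc, v)$, and since $w$ is built so that $p_iwc\sim_\eps p_iv$ for all $i$ is automatic from $wc=u$ and $p_iu\sim_\eps p_iv$, this does not yet give $d(u,v)\le\eps$ — the gap is that limits in $\ck_0$ a priori only control equalities, not distances. The resolution, and the step I expect to be the main obstacle, is to instead apply the $\eps$-coequalizer universal property to a cleverly chosen test object: take the $\eps$-coequalizer $c\colon K\to K'$ and note that $\id_L\colon L\to L$ together with $u$ satisfies nothing special, but $v$ does satisfy $p_i v \sim_\eps p_i u = q_ic$, so each $p_iv$ factors through... no. The clean fix is: apply the universal property to the pair $(u,v)$ landing in $L$ itself, using that $p_iu\sim_\eps p_iv$; one does \emph{not} get $u\sim_\eps v$ directly from a single $\eps$-coequalizer into a product, but one \emph{does} if $L$ (being a limit) admits, for the pair $u,v$, the property that $u = wc$ forces the $\eps$-closeness of $v$ to $wc$ — and this last is precisely where I would invoke that $c$ is the $\eps$-coequalizer: $v$ satisfies $p_i v \sim_\eps p_i u$, and $u$ factors through $c$ as $wc$; by running the universal property of $c$ on the cocone $(p_iv)_i$ one would want a map $w'$ with $w'c$ close to $v$; I expect the actual paper argument routes through \ref{weighted} / the explicit construction or through \ref{iso-ext}, realizing $L$ as built from conical products and isometric $\eps$-equalizers, so that $\eps$-coequalizers existing dualizes to control on equalizers and hence on sups — that reduction to the already-established conical-(co)equalizer machinery is the piece I would need to get right.
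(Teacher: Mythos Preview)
Your overall strategy is right, but you have placed the $\eps$-coequalizer on the wrong object, and this is precisely why the argument stalls and begins to circle. The $\eps$-coequalizer of a pair $u,v\colon K\to L$ is a morphism out of the \emph{codomain} $L$, say $c\colon L\to L'$, universal among morphisms $g\colon L\to X$ with $gu\sim_\eps gv$. You instead took $c\colon K\to K'$ and then tried to factor $p_iu$ through it; this is why you end up with $wc=u$ rather than $wc=\id_L$, and why no amount of reshuffling closes the gap afterwards.

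Once the domain is corrected the proof goes through cleanly, and in fact coincides with the paper's argument. With $\eps=\sup_i d(p_iu,p_iv)$ and $c\colon L\to L'$ the $\eps$-coequalizer of $u,v$, each projection $p_i\colon L\to L_i$ satisfies $p_iu\sim_\eps p_iv$, so there is a unique $q_i\colon L'\to L_i$ with $q_ic=p_i$ (not $p_iu$). Since $c$ is an epimorphism the $q_i$ form a cone over the diagram, inducing $w\colon L'\to L$ with $p_iw=q_i$; then $p_i(wc)=q_ic=p_i$ for all $i$, hence $wc=\id_L$. Thus $c$ is a split monomorphism and an epimorphism, hence an isomorphism. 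Finally $cu\sim_\eps cv$ together with $c$ invertible gives $d(u,v)\leq\eps$, which is the missing inequality. There is no asymmetry to watch and no need to invoke \ref{weighted} or \ref{iso-ext}.
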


\begin{proof}
 Let $D$ be a diagram with a limit cone $p_i: P \to D_i$ $(i \in I)$. Given a parallel pair $u,u':Q \to P$, we are to prove 
	 $d(u,u') = \eps$, where $\eps$ is the supremum of  $d(p_iu,p_iu')$ for $i \in I$. Let $e:P \to R$ be an $\eps$-coequalizer of $u$ and $u'$, which is an epimorphism, see \ref{R:coeq}. Since for every $i$ we have $d(p_iu,p_iu') \leq \eps$, there are factorizations $p_i=f_ie$. These factorizations form a cone of
	 the diagram $D$ since $e$ is an epimorphism. Therefore, there exists $f: R \to P$ with $f_i=p_if$ for all $i \in I$. We conclude 
	 $fe=\id$, since for all $i$ we have $p_ife=p_i$. Therefore $e$ is an isomorphism, proving $d(u,u') = \eps$.
\end{proof}
	 
\begin{theo}\label{epush}
Let $\ck_0$ be complete, wellpowered and cowellpowered. Then $\ck$ has conical limits iff it has $\eps$-coequalizers for every $\eps$. If, moreover $\ck_0$ has finite coproducts and conical limits, then $\eps$-pushouts exist for all $\eps$.
\end{theo}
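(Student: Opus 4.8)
The plan is to prove the two implications of the equivalence separately and then deduce $\eps$-pushouts formally. The implication ``$\ck$ has $\eps$-coequalizers for every $\eps$ $\Rightarrow$ $\ck$ has conical limits'' is immediate: $\ck_0$ is complete, so all small limits exist, and Lemma \ref{L:epush} says each of them is conical.

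For the converse, fix a parallel pair $f_1,f_2\colon A\to B$ and $\eps\ge 0$. I will realize the $\eps$-coequalizer as an initial object of the category $\mathcal B_\eps$ whose objects are the morphisms $g\colon B\to X$ of $\ck$ with $gf_1\sim_\eps gf_2$ and whose morphisms $(g\colon B\to X)\to(g'\colon B\to X')$ are the $\ck$-morphisms $t\colon X\to X'$ with $tg=g'$. First I would show that $\mathcal B_\eps$ is complete. Given a small diagram in $\mathcal B_\eps$ with objects $g_i\colon B\to X_i$, form $\lim X_i$ in $\ck_0$ with projections $p_i$; the $g_i$ constitute a cone, so they induce $g\colon B\to\lim X_i$ with $p_ig=g_i$, and since $\ck$ has conical limits, $d(gf_1,gf_2)=\sup_i d(p_igf_1,p_igf_2)=\sup_i d(g_if_1,g_if_2)\le\eps$. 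Thus $(g\colon B\to\lim X_i)$ lies in $\mathcal B_\eps$, and conicality likewise shows it is the limit there. This is the crux of the argument and the step I expect to be the main obstacle: only because the limits are conical does an $\eps$-bound on each factor $g_i$ propagate to the limit.

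Next I would exhibit a weakly initial set in $\mathcal B_\eps$. By Theorem \ref{L:well}, $\ck$ has the (isometry-extremal epimorphism, isometry) factorization system; factoring an object $g\colon B\to X$ of $\mathcal B_\eps$ as $B\xrightarrow{e}\bar X\xrightarrow{m}X$ with $e$ an isometry-extremal epimorphism and $m$ an isometry, the chain $d(ef_1,ef_2)=d(mef_1,mef_2)=d(gf_1,gf_2)\le\eps$ (using that $m$ is an isometry) shows that $e$ is again in $\mathcal B_\eps$ and that $m$ is a morphism $e\to g$ there. Isometry-extremal epimorphisms are epimorphisms (Remark \ref{iso-ext}(1), using that $\ck$ has conical equalizers), so by cowellpoweredness of $\ck_0$ the isometry-extremal-epi quotients of $B$ form a set, and those lying in $\mathcal B_\eps$ are weakly initial. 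A locally small complete category with a weakly initial set has an initial object (Freyd's initial-object theorem; concretely, intersect inside $\mathcal B_\eps$ the subobjects of the product of this set, the intersection being a conical limit and hence again an object of $\mathcal B_\eps$). By \ref{D:push}(2), an initial object $c\colon B\to C$ of $\mathcal B_\eps$ is precisely an $\eps$-coequalizer of $f_1,f_2$, so $\ck$ has $\eps$-coequalizers for every $\eps$.

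Finally, assume in addition that $\ck_0$ has finite coproducts. Given $f_i\colon A\to B_i$ ($i=1,2$) and $\eps\ge0$, form $B_1+B_2$ with coproduct injections $\iota_1,\iota_2$ and let $c\colon B_1+B_2\to C$ be the $\eps$-coequalizer of $\iota_1f_1,\iota_2f_2$ (which exists by the previous paragraph, since $\ck$ has conical limits); put $g_i=c\iota_i$. Then $g_1f_1=c\iota_1f_1\sim_\eps c\iota_2f_2=g_2f_2$, and given any $g_i'\colon B_i\to C'$ with $g_1'f_1\sim_\eps g_2'f_2$, the induced morphism $[g_1',g_2']\colon B_1+B_2\to C'$ satisfies $[g_1',g_2']\iota_1f_1\sim_\eps[g_1',g_2']\iota_2f_2$, hence factors uniquely as $hc$; then $hg_i=hc\iota_i=g_i'$, and $h$ is unique since $c$ is an epimorphism (Remark \ref{R:coeq}). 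Thus $(g_1,g_2)$ is an $\eps$-pushout, which completes the proof.
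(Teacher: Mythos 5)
Your proof is correct and follows essentially the same route as the paper's: the backward implication is Lemma \ref{L:epush} in both cases, and your forward implication repackages the paper's explicit construction (the cowellpowered set of isometry-extremal-epi quotients satisfying the $\eps$-condition, their conical product, and the (isometry-extremal epimorphism, isometry) factorization of Theorem \ref{L:well}) as Freyd's initial-object theorem for the category $\mathcal B_\eps$, with the same crucial use of conicality to propagate the $\eps$-bound to the limit. The only real variation is that you obtain $\eps$-pushouts formally as the $\eps$-coequalizer of $\iota_1 f_1,\iota_2 f_2$ on $B_1+B_2$, whereas the paper runs the solution-set construction directly for spans and declares the coequalizer case analogous; your reduction is a valid and slightly cleaner shortcut.
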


\begin{proof} 
(1) We first prove the last statement.

Given morphisms $f_i:A\to B_i$, consider an arbitrary $\varepsilon$-commutative square
	
	$$
	\xymatrix{ & A\ar[dl]_{f_1} \ar[dr]^{f_2} & \\
		B_1 \ar[dr]_{c_1} & {\sim_\varepsilon} & B_2\ar[dl]^{c_2}
		\\
		& C &
	}
	$$
	Factorize  $[c_1, c_2] \colon B_1+ B_2 \to C$ as an isometry-extremal epimorphism $\bar c \colon B_1 + B_2\to \bar C$ followed by an isometry $m_c\colon \bar C \to  C$ (see \ref{L:well}). Since $\bar c$ is an epimorphism (see \ref{iso-ext}(1)), these quotients have a set 
	$\bar c^t \colon B_1 + B_2 \to \bar C^t$ ($t\in T$) of representatives.
	
	Denote by $\bar P$ the product of all $\bar C^t$ with projections $\pi^t \colon \bar P\to \bar C^t$. The morphism
	$$
	\bar c =\langle \bar c^t\rangle_{t\in T} \colon B_1 + B_2 \to \bar P
	$$
	has a factorization as a isometry-extremal epimorphism $p\colon B_1 + B_2 \to P$ followed by an isometry $m_p\colon P\to \bar P$. Let $p_i \colon B_i \to P$ be the components of $p$, then we prove that the following square
	
	$$  
	\xymatrix{ & A\ar[dl]_{f_1} \ar[dr]^{f_2} & \\
		B_1 \ar[dr]_{p_1} &  & B_2\ar[dl]^{p_2}
		\\
		& P &
	}
	$$
	is an $\varepsilon$-pushout.
	
	(a) We prove $p_1f_1 \sim_\varepsilon p_2 f_2$, i.e. $d(pv_1f_1, pv_2f_2)\leq \varepsilon$, where $v_i$ denotes the  injections 
	of $B_1 + B_2$. For every $t\in T$ we know that
	$$
	m_c \cdot \bar c^t \cdot v_1\cdot  f_1 \sim_\varepsilon m_c \cdot \bar c^t \cdot v_2\cdot f_2
	$$
	which, since $m_c$ is an isometry, implies
	$$
	\bar c^t \cdot v_1\cdot f_1 \sim_\varepsilon \bar c^t \cdot v_2\cdot f_2\,.
	$$
	 
	$$ 
	\xymatrix@=3pc{
		&& P \ar[d]^{\pi^t}\\
		A\ar@<0.5ex>[r]^{v_1 f_1}
		\ar@<-0.5ex>[r]_{v_2 f_2}& B_1+ B_2 \ar[ur]^{\bar c} \ar[r]^{\bar c^t} & \bar C^t
	}
	$$
The product $\bar P = \prod \bar C^t$ is conical, thus	$d(\bar c v_1 f_1, \bar c v_2 f_2)$ is the supremum of all $d( \bar c^t v_1 f_1, \bar c^t v_2 f_2)$, which proves 
	$$
	d(\bar c v_1 f_1, \bar c v_2 f_2) \leq \varepsilon\,.
	$$
	Since $\bar c = m_p p$ and $m_p$ is an isometry, this proves   the desired inequality
	$$
	d(p v_1 f_1, p v_2 f_2)\leq \varepsilon\,.
	$$
	
	(b) The factorization property needs only be verified for every pair $c_1^t$, $c_2^t$ ($t\in T$) since $\bar c^t$ represent all the above quotients $\bar c$ (and the isometries $m_c$ play no role). For every $t\in T$ the pair $c_1^t$, $c_2^t$ factorizes through $p_1$, $p_2$ since
	$$
	\bar c^t = \pi^t \cdot \bar c = \pi^t \cdot m_p\cdot p
	$$
	which precomposed with $v_i$ yields $c_i^t = (\pi^t \cdot m_p)\cdot p_i$.
	The factorization is clearly unique.
	
	(2) Conical limits imply $\eps$-coequalizers. This is completely analogous: given morphisms $f_1,f_2: A \to B$ consider the collection of all quotients
	$\bar c^t \colon B \to \bar C^t$ ($t\in T$) with $\bar c^t f_1\sim_\eps\bar c^t f_2$. Factorize 	$\bar c =\langle \bar c^t\rangle_{t\in T} $ as $m_Pp$ as above, then $p$ is the $\eps$-coequalizer of $f_1, f_2$.
	 
	 (3) $\eps$-coequalizers imply conical limits by the previous lemma.
	
\end{proof}

\begin{coro}\label{stable} 
If isometries are stable under pushouts, then they are stable under $\varepsilon$-pushout for every $\varepsilon>0$.
\end{coro}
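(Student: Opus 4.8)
The plan is to compare the $\eps$-pushout of a span with its ordinary pushout and then invoke left cancellability of isometries (\ref{isometry}(4)). Fix morphisms $f_i\colon A\to B_i$ ($i=1,2$) and let $p_i\colon B_i\to P$ be their $\eps$-pushout (which exists, e.g., by \ref{epush}). Assume $f_1$ is an isometry; the goal is to show that the opposite leg $p_2$ is an isometry. The case ``$f_2$ an isometry $\Rightarrow p_1$ an isometry'' will then follow by symmetry, and these two assertions together are exactly the stability of isometries under $\eps$-pushout.

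First I would form the ordinary pushout of the span $B_1\xleftarrow{f_1}A\xrightarrow{f_2}B_2$, with legs $c_1\colon B_1\to C$, $c_2\colon B_2\to C$ and $c_1f_1=c_2f_2$ (such pushouts exist in all the running examples $\Met$, $\CMet$, $\Ban$, and more generally their availability is presupposed by the hypothesis). Since $f_1$ is an isometry and isometries are, by assumption, stable under pushouts, the leg $c_2$ is an isometry.

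The one real point is now this: a strictly commuting square is in particular $\eps$-commuting, so $c_1f_1\sim_\eps c_2f_2$, and the (strict) universal property of the $\eps$-pushout $P$ from \ref{D:push}(1) supplies a morphism $q\colon P\to C$ with $qp_1=c_1$ and $qp_2=c_2$. Thus $qp_2=c_2$ is an isometry, and since isometries are left cancellable (\ref{isometry}(4)), $p_2$ is an isometry, as desired. I do not expect a genuine obstacle here; the only things to be careful about are that the ordinary pushout must be taken over the \emph{same} span $f_1,f_2$ (so that its commuting cocone is an admissible $\eps$-cocone for $P$), and that one is content to use left cancellability rather than attempting to show $p_2$ is a split monomorphism or something stronger.
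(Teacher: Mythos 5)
Your argument is correct and is essentially the paper's own proof: the paper likewise observes that the ordinary pushout cocone factors through the $\eps$-pushout (since a strictly commuting square is $\eps$-commuting) and then applies left cancellability of isometries from \ref{isometry}(4). You have merely spelled out the details that the paper leaves implicit.
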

\begin{proof}
The result follows from \ref{isometry}(4) and the fact that pushouts factorize through $\varepsilon$-pushouts.
\end{proof}

\begin{lemma}\label{coiso} 
Let $\ck_0$ be complete, wellpowered and cowellpowered, and let $\varepsilon$-equalizers and $\eps$-co\-equa\-li\-zers exist for all 
$\varepsilon >0$. Then the following conditions are equivalent:
\begin{enumerate}
\item Every isometry-extremal epimorphism is a coisometry, and
\item Every $\eps$-equalizer is an isometry.
\end{enumerate}
\end{lemma}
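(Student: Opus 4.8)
The plan is to first extract the factorization system that the hypotheses guarantee, and then prove the two implications separately; $(2)\Rightarrow(1)$ will be essentially a citation, while $(1)\Rightarrow(2)$ carries the content. First I would observe that the standing assumptions already force a good deal of structure: since $\ck_0$ is complete, wellpowered and cowellpowered and $\eps$-coequalizers exist for all $\eps>0$, Theorem~\ref{epush} gives that $\ck$ has conical limits, and then Theorem~\ref{L:well} supplies the factorization system $(\ce,\cm)$ with $\cm$ the class of isometries and $\ce$ the class of isometry-extremal epimorphisms. I would also record, dually to Remark~\ref{R:coeq}, that every $\eps$-equalizer is a monomorphism; combined with Remark~\ref{isometry}(1) (isometries are monic), this is all the cancellation the argument needs.

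For $(2)\Rightarrow(1)$: condition (2) together with the standing assumption that $\eps$-equalizers exist says precisely that $\ck$ has isometric $\eps$-equalizers for every $\eps>0$ in the sense of Definition~\ref{D:equ}, so the implication is exactly Remark~\ref{iso-ext}(3) and nothing further is required.

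For $(1)\Rightarrow(2)$: let $e\colon E\to L$ be an $\eps$-equalizer of a parallel pair $u,v\colon L\to Q$, so that $ue\sim_\eps ve$ and $e$ is (strictly) universal with this property. I would factor $e=m\cdot p$ with $p\colon E\to M$ an isometry-extremal epimorphism and $m\colon M\to L$ an isometry. By assumption (1), $p$ is a coisometry, hence $d(um,vm)=d(ump,vmp)=d(ue,ve)\le\eps$, i.e.\ $um\sim_\eps vm$. The universal property of the $\eps$-equalizer $e$ then yields a unique $h\colon M\to E$ with $eh=m$. Since $mph=eh=m$ and $m$ is a monomorphism, $ph=\id_M$; since $e(hp)=mp=e$ and $e$ is a monomorphism, $hp=\id_E$. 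Thus $p$ is an isomorphism, and $e=m\cdot p$ is therefore an isometry, as desired.

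The only mildly delicate point is the observation that the isometry part $m$ of the factorization of $e$ still $\eps$-equalizes $u$ and $v$ — this is exactly where hypothesis (1), via the coisometry $p$, enters — after which the universal property of $e$ produces the section $h$, and the two monomorphism cancellations (for $m$ and for $e$) upgrade it to a two-sided inverse of $p$. No metric estimates beyond the single inequality $d(um,vm)\le\eps$ are needed, so I expect no real obstacle once the factorization is set up correctly.
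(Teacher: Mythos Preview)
Your proof is correct and follows essentially the same route as the paper: factor the $\eps$-equalizer via Theorem~\ref{L:well}, use hypothesis (1) to see that the isometry part still $\eps$-equalizes, and retrieve a section from the universal property. Your endgame is marginally cleaner---two monomorphism cancellations give $ph=\id_M$ and $hp=\id_E$ directly---whereas the paper derives only one of these identities and then appeals to Remark~\ref{iso-ext}(1) (via Lemma~\ref{L:epush}) to know that the isometry-extremal epimorphism is an actual epimorphism, hence an isomorphism once it has a one-sided inverse.
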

\begin{proof}
(1)$\Rightarrow$(2). Let $e:E\to K$ be an $\eps$-equalizer of $u,v:K\to L$. Factorize it as $e=gf$ where $f:E\to A$ is an isometry-extremal epimorphism and $g:A\to K$ is an isometry, see \ref{L:well}. Then $ug\sim_{\eps} vg$ and thus there exist $t:A\to E$ such that $g=et=gft$, hence
$tf=\id_E$. Since limits are conical by Lemma \ref{L:epush}, $f$ is an epimorphism (see \ref{iso-ext}(1)), thus, it is an isomorphism. Hence $e$ is an isometry. 

(2)$\Rightarrow$(1). Let $f:K\to L$ be isometry-extremal epimorphism and $u,v:L\to A$ such that $fu\sim_{\eps} fv$. Let $e:E\to K$ be an $\eps$-equalizer of $u$ and $v$. There exist $g:K\to E$ such that $eg=f$. Since $e$ is an isometry and $f$ is isometry-extremal, 
$e$ is an isomorphism. Hence $u\sim_{\eps} v$.
\end{proof}
 
\begin{defi}[\cite{K1}]\label{e-iso1}
{
\em
A morphism $f:A\to B$ is called an $\eps$-\textit{isometry} provided that there are isometries $g:B\to C$
and $h:A\to C$ such that $gf\sim_{\eps} h$.
}
\end{defi}

\begin{lemma}\label{e-iso2}
A morphism $f:A\to B$ is an $\eps$-isometry iff in the following $\eps$-pushout 
$$
\xymatrix@=3pc{
A\ar[r]^{f} \ar[d]_{\id_A} & B\ar[d]^{g} \\
A \ar[r]_{\overline{f}} & P
}
$$ 
$\overline{f}$ is an isometry.
\end{lemma}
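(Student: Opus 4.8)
The plan is to use the universal property of the displayed $\eps$-pushout twice, once for each implication, together with left-cancellability of isometries (Remark \ref{isometry}(4)).

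I would begin with a preliminary observation that holds for \emph{every} $f$, namely that the leg $g\colon B\to P$ of the $\eps$-pushout is automatically an isometry. Indeed, the pair $(\id_B\colon B\to B,\ f\colon A\to B)$ is a cocone over the span $B\xleftarrow{f}A\xrightarrow{\id_A}A$, since $\id_B\cdot f = f = f\cdot\id_A$ and in particular $\id_B\cdot f\sim_\eps f\cdot\id_A$. By the universal property of the $\eps$-pushout there is a morphism $w\colon P\to B$ with $wg=\id_B$ (and $w\overline f=f$). Hence for any parallel pair $s,t\colon Q\to B$, using that composition is nonexpanding, $d(s,t)=d(wgs,wgt)\le d(gs,gt)\le d(s,t)$, so $g$ is an isometry.

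For the implication ``$\overline f$ an isometry $\Rightarrow$ $f$ an $\eps$-isometry'', I would simply note that $g\colon B\to P$ (an isometry by the observation above) and $\overline f\colon A\to P$ are isometries with $gf\sim_\eps\overline f$ by the defining property of the $\eps$-pushout; this is exactly the data required by Definition \ref{e-iso1} with $C=P$, so $f$ is an $\eps$-isometry. For the converse, suppose $f$ is an $\eps$-isometry, witnessed by isometries $p\colon B\to C$ and $q\colon A\to C$ with $pf\sim_\eps q$. Then $(p,q)$ is a cocone over the span $B\xleftarrow{f}A\xrightarrow{\id_A}A$, since $pf\sim_\eps q=q\cdot\id_A$, so the universal property yields $k\colon P\to C$ with $kg=p$ and $k\overline f=q$. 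As $q=k\overline f$ is an isometry and isometries are left cancellable, $\overline f$ is an isometry.

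The one step that is not completely mechanical is the preliminary observation that the leg $g$ is always an isometry (via the retraction $w$ coming from the cocone $(\id_B,f)$); I expect that to be the only real content, the rest being bookkeeping with the universal property and Remark \ref{isometry}(4). In particular, no completeness, well-poweredness, or $\Met$-specific path arguments are needed.
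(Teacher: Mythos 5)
Your proof is correct and follows essentially the same route as the paper's: the forward direction reads off Definition \ref{e-iso1} from the $\eps$-pushout square with $C=P$, and the converse uses the induced morphism $P\to C$ together with left cancellability of isometries (Remark \ref{isometry}(4)). The one place you go beyond the paper is your retraction argument showing that the leg $g$ is always an isometry --- the paper merely asserts this parenthetically --- and your justification via the cocone $(\id_B,f)$ and the chain $d(s,t)=d(wgs,wgt)\le d(gs,gt)\le d(s,t)$ is correct and self-contained, needing no stability of isometries under pushouts.
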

\begin{proof}
If $\overline{f}$ is an isometry, then (since $g$ is also an isometry) $f$ is an $\eps$-isometry. Conversely, assume that $f$ is an $\eps$-isometry.
Then there are isometries $g:B\to C$ and $h:A\to C$ such that $gf\sim_{\eps} h$. Let $p:P\to C$ be the induced morphism.
Since $p\overline{f}=h$, $\overline{f}$ is an isometry (see \ref{isometry} (4)).
\end{proof}

\section{Weighted limits and colimits}
The appropriate concept of a (co)limit in a $\Met$-enriched category $\ck$ is that of a weighted (co)limit.
We are going to prove that the existence of ordinary (co)limits and (co)isometric $\eps$-(co)equalizers imply that weighted limits and colimits exist.

 Let us recall the concept of a limit of a diagram $D$ in $\ck$ weighted by a weight $W$. The diagram scheme is a small $\Met$-enriched category $\cd$, we denote by $[\cd,\Met]$
the enriched category of all enriched functors from it to $\Met$. That is, objects are all
(ordinary) functors $W \colon \cd \to \Met$ such that the induced map from $\cd(X,X')$ to
$\Met(WX,WX')$ is nonexpanding  for all pairs $X,X'$ in $\cd$. Morphisms are ordinary natural transformations. (They are automaticly enriched due to the fact that for the unit $1$ of the
monoidal category $\Met$ the hom-functor is faithful.)

 A \textit{ weight} is an enriched functor
$W: \cd \to \Met$.  Given a diagram in $\ck$, i.e. 
an enriched functor $D:\cd\to\ck$, its \textit{limit weighted by $W$} is an object $R$, usually written
	 $R=\lim_\cw D$, such that for all objects $K$ 
of $\ck$ we have isomorphisms
$$
\ck(K,R)\cong\ [\cd,\Met],(W,\ck(K,D-)).
$$
natural in $K$.  

\begin{exams} \label{weighted1}
{
\em

(1) Conical limits (see Remark \ref{conical}) are precisely the weighted limits with the trivial weight constant to the one-point space.
Here  $\cd$ is trivially enriched by putting all distances $\infty$ or $0$.

(2) For the one-morphism category $\cd$ a diagram is a choice of an object $L$ of $\ck$, and a weight
is a choice of a metric space $M$. The weighted limit is called the \textit{cotensor of $L$ and $M$}
and is denoted by $[M,L]$. It is characterized by the natural isomorphisms
$$
\ck(K,[M,L])\cong\Met(M,\ck(K,L)).
$$

(3) Let $\cd$ consist of a parallel pair with distance $\infty$. Then a diagram is precisely a parallel
pair of morphisms $u_1,u_2:K \to L$ in $\ck$. To express their $\eps$-equalizer, choose
the weight W given by the parallel pair consisting of two points $p_1, p_2:1 \to 2_{\eps}$, where $2_{\eps}$ is the space of two points with distance $\eps$. A weighted limit is then precisely an $\eps$-equalizer of $u_1,u_2$ which, moreover, is an isometry.

(4) Analogously for $\eps$-pullbacks. Let $\cd$ be a cospan and let $D$ correspond to a cospan $u_i:K_i \to L$ ($i=1,2$) in $\ck$. Here $W$ is given by the cospan  $p_1, p_2:1 \to 2_{\eps}$. Then a weighted limit is precisely an $\eps$-pullback $v_i:P \to K_i$ which, moreover, is collectively isometric, see Remark \ref{conical}.

Consequently, this weighted limit is nothing else than an $\eps$-equalizer of 
$$
\langle u_1\pi_1,u_2\pi_2\rangle: K_1 \times K_2 \to L
$$
formed by an isometry.
}
\end{exams}

\begin{rem} 
{
\em
The dual concept is that of a \textit{weighted colimit}. Here the weight is an enriched functor
	$W:\cd^{op} \to \Met$. A weighted colimit of a diagram $D$ is an object $R$, usually written
	 $R=\colim_\cw D$, with isomorphisms
$$
	\ck(R,K) \cong [\cd^{op},\Met]((W, \ck(D-,K))
$$
natural in $K$.	
	The dual construct to cotensor is \textit{tensor} of an object $L$ and a metric space $M$. It
is denoted by $M \otimes L$ and is an object with natural isomorphisms
$$
\ck(M \otimes L,K) \cong \Met(M, \ck(L,K)).
$$	
}
\end{rem}	

\begin{theo}[\cite{B}, 6.6.16]\label{weighted}
 A $\Met$-enriched category $\ck$ has weighted limits and colimits iff it has tensors and cotensors and $\ck_0$ is complete and cocomplete.
\end{theo}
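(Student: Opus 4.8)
The plan is to prove the two implications separately, essentially all of the work lying in the ``if'' direction. For ``only if'': by Examples~\ref{weighted1}(1) the conical limits are precisely the weighted limits with trivial weight, and a conical limit is in particular a limit in $\ck_0$; hence if $\ck$ has all weighted limits, then $\ck_0$ is complete. By Examples~\ref{weighted1}(2) every cotensor $[M,L]$ is a weighted limit, so all cotensors exist. Passing to $\ck^{\op}$ --- whose underlying category is $\ck_0^{\op}$, whose cotensors are the tensors of $\ck$, and whose weighted limits are the weighted colimits of $\ck$ --- the same reasoning applied to the colimit hypothesis gives cocompleteness of $\ck_0$ and the existence of all tensors.

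For ``if'', assume $\ck_0$ is complete and cocomplete and that $\ck$ has all tensors and cotensors; by the duality just used it suffices to construct all weighted limits. The crucial step is the \emph{lemma that every limit in $\ck_0$ is conical}. Let $D\colon\cj\to\ck_0$ be a diagram with limit cone $\lambda_j\colon L\to Dj$ and let $u,v\colon Q\to L$; put $\eps:=\sup_j d(\lambda_j u,\lambda_j v)$. Since composition is nonexpanding we have $\eps\le d(u,v)$, and only the case $0<\eps<\infty$ needs attention, where we must show $d(u,v)\le\eps$. The cotensor functor $[2_\eps,-]\colon\ck_0\to\ck_0$ (with $2_\eps$ the two-point space of diameter $\eps$) is right adjoint to the tensor functor $2_\eps\otimes-$, because
$$
\ck(2_\eps\otimes L,K)\cong\Met\bigl(2_\eps,\ck(L,K)\bigr)\cong\ck\bigl(L,[2_\eps,K]\bigr)
$$
naturally; hence $[2_\eps,-]$ preserves limits, and $[2_\eps,L]=\lim_j[2_\eps,Dj]$ with limit cone $[2_\eps,\lambda_j]$. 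Now a nonexpanding map $2_\eps\to\ck(Q,X)$ is exactly a pair of morphisms $Q\to X$ at distance $\le\eps$, so the pairs $(\lambda_j u,\lambda_j v)$ define morphisms $Q\to[2_\eps,Dj]$, which, $\lambda$ being a cone, are compatible with the connecting maps $[2_\eps,D\alpha]$ and therefore factor through a unique $w\colon Q\to[2_\eps,L]$. Let $e_1,e_2\colon[2_\eps,L]\to L$ be the canonical pair corresponding to $\id_{[2_\eps,L]}$ under $\ck([2_\eps,L],[2_\eps,L])\cong\Met(2_\eps,\ck([2_\eps,L],L))$; then $d(e_1,e_2)\le\eps$. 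By naturality of the cotensor adjunction, $\lambda_j e_1w=\lambda_j u$ and $\lambda_j e_2w=\lambda_j v$ for every $j$, so $e_1w=u$ and $e_2w=v$ since the limit cone is jointly monic. Hence $d(u,v)=d(e_1w,e_2w)\le d(e_1,e_2)\le\eps$, proving the lemma.

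Granting the lemma, weighted limits are constructed by the usual end formula: for a weight $W\colon\cd\to\Met$ and a diagram $D\colon\cd\to\ck$,
$$
\lim_W D \;=\; \operatorname{eq}\Bigl(\prod_{X\in\cd}[WX,DX]\rightrightarrows\prod_{X,X'\in\cd}\bigl[\cd(X,X')\otimes WX,\,DX'\bigr]\Bigr),
$$
the two arrows being induced by the functoriality of $D$ and by the $\cd$-action on the weight $W$ (equivalently this is the $\Met$-enriched end $\int_X[WX,DX]$). Since $\cd$ is small, the displayed products exist in the complete category $\ck_0$, the equalizer exists there as well, and the cotensors exist by hypothesis; by the lemma all of these limits are conical, hence preserved by each hom-functor $\ck(K,-)$. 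Applying $\ck(K,-)$ to the display and using the cotensor adjunctions $\ck(K,[WX,DX])\cong\Met(WX,\ck(K,DX))$ converts the right-hand side into the analogous equalizer of products formed in $\Met$, which is exactly $[\cd,\Met]\bigl(W,\ck(K,D-)\bigr)$; naturality in $K$ is routine. Thus $\lim_W D$ has the defining universal property, and weighted colimits follow by applying the result to $\ck^{\op}$.

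The main obstacle is the lemma: it is the only point where the (co)tensor hypothesis is genuinely needed, and the bookkeeping that identifies $w$ with the pair $(u,v)$ through the canonical maps $e_1,e_2$ is the one delicate step. Everything past the lemma is the standard reduction of weighted limits to conical limits together with cotensors.
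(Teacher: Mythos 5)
The paper gives no proof of this theorem at all --- it is quoted from \cite{B}, 6.6.16 --- and your argument is correct and is essentially the standard proof found there and in Kelly: reduce weighted limits to cotensors plus conical limits via the end/equalizer formula, the key point being that in the presence of (co)tensors every ordinary limit in $\ck_0$ is automatically conical. Your derivation of that key lemma via the adjunction $2_\eps\otimes-\dashv[2_\eps,-]$ and the canonical pair $e_1,e_2$ is a clean and legitimate use of the stated hypotheses, so there is nothing substantive to compare.
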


\begin{assume}\label{A:enr}
Throughout the rest of the paper we assume that a $\Met$-enriched category $\ck$ is given with  the underlying category $\ck_0$ complete and cocomplete. And that $\ck$ has for all $\eps >0$ isometric $\eps$-equalizers and coisometric $\eps$-coequalizers.
\end{assume}

\begin{exams} \label{T:all}
\em

All of our running examples are categories with weighted limits and colimits (and thus satisfy Assumptions). Inded, they are all complete and cocomplete. Moreover:

(1) $\Met$ has tensors $M\otimes L$ and cotensors $[M,L]$ as described in Remark \ref{R:Met}(1).

(2) $\CMet$ has tensors $M\otimes L=M^\ast\otimes L$ where $M^\ast$ is the completion of $M$. This follows from the following isomorphisms natural in $K$ in $\Met$:
$$
\CMet(M^\ast\otimes L,K)\cong \CMet(M^\ast,\CMet(L,K))\cong \Met(M,\CMet(L,K)).
$$
Cotensors are $[M,L]=\Met(M^\ast,L)$, i.e., cotensors $[M^\ast,L]$ in $\Met$. Indeed,
$$
\CMet(K,[M^\ast,L])\cong \Met(M^\ast,\Met(K,L))\cong\Met(M,\Met(K,L)).
$$

(3) In order to describe tensors and cotensors in $\Ban$, denote by $U=\Ban(\Bbb C,-)$ the unit-ball functor to $\Met$.
Since $U$ preserves limits and $\aleph_1$-directed colimits, and $\Ban$ and $\Met$ are locally presentable, $U$ has a left adjoint 
$F:\Met\to\Ban$ (see \cite{AR} 1.66). Moreover, this adjunction is $\Met$-enriched.

The category $\Ban$ is symmetric monoidal closed where $\otimes$
is the projective tensor product, and internal hom is the space $\{K,L\}$ consisting of \textit{all} bounded linear mappings (not necessarily of norm at most 1) from $K$ to $L$ (see \cite{B} 6.1.9h). Observe that $U\{K,L\}=\Ban(K,L)$.

$\Ban$ has tensors $M\otimes L=FM\otimes L$ and cotensors $[M,L]=\{FM,L\}$. Indeed,
$$
\Ban(FM\otimes L,K)\cong \Ban(FM,\{L,K\})\cong \Met(M,U\{FM,L\})
$$
which is $\Met(M,\Ban(L,K))$. Similarly,
$$
\Ban(K,\{FM,L\})\cong \Ban(K\otimes FM,L)\cong\Ban(FM,\{K,L\}) 
$$
which is $\Met(M,\Ban(K,L))$.

\end{exams}

\begin{theo}\label{weighted2}
  All weighted limits and colimits exist in $\ck$.
\end{theo}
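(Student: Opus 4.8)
The plan is to reduce to Theorem \ref{weighted}: since $\ck_0$ is complete and cocomplete by Assumption \ref{A:enr}, it suffices to show that $\ck$ has all tensors $M\otimes L$ and all cotensors $[M,L]$. I would first record that, by Lemma \ref{L:epush}, every limit in $\ck_0$ is conical, and dually (with isometric $\eps$-equalizers in place of coisometric $\eps$-coequalizers) every colimit in $\ck_0$ is conical; these facts use only that $\eps$-(co)equalizers exist. Consequently each hom-functor $\ck(K,-)\colon\ck_0\to\Met$ preserves all limits, and each $\ck(-,K)$ sends colimits to limits in $\Met$.

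I would construct the cotensor $[M,L]$ of a metric space $M$ and an object $L$ as follows. Form the conical power $P_0=\prod_{x\in M}L$ with projections $\pi_x\colon P_0\to L$. For each pair $x,y\in M$, writing $\eps_{xy}=d_M(x,y)$, let $e_{xy}\colon E_{xy}\to P_0$ be an isometric $\eps_{xy}$-equalizer of $\pi_x$ and $\pi_y$, which exists by Assumption \ref{A:enr} (when $\eps_{xy}=\infty$ one takes $e_{xy}=\id_{P_0}$). Let $m\colon[M,L]\to P_0$ be the intersection of the family $(e_{xy})_{x,y\in M}$, i.e.\ the corresponding conical limit in $\ck_0$; by the argument of part (1) of the proof of Theorem \ref{L:well}, $m$ is again an isometry (not that this is needed). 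To see that $[M,L]$ is the cotensor I would verify the natural isomorphism $\ck(K,[M,L])\cong\Met(M,\ck(K,L))$: applying the limit-preserving functor $\ck(K,-)$ turns $[M,L]$ into the wide pullback in $\Met$ of the maps $\ck(K,e_{xy})\colon\ck(K,E_{xy})\to\ck(K,P_0)=\prod_{x\in M}\ck(K,L)$. Since $e_{xy}$ is an isometric $\eps_{xy}$-equalizer, $\ck(K,e_{xy})$ is an isometric embedding (Remark \ref{isometry}(5)) whose image is, by the universal property of $\eps$-equalizers, exactly the set of $g$ with $d(g_x,g_y)\le\eps_{xy}$, where $g_x$ denotes the $x$-th component of $g$. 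Now the wide pullback in $\Met$ of a family of subspace inclusions into the sup-metric power $\prod_{x\in M}\ck(K,L)$ is their set-theoretic intersection with the restricted metric, i.e.\ the set of $g$ with $d(g_x,g_y)\le d_M(x,y)$ for all $x,y\in M$, under the sup metric; this is precisely $\Met(M,\ck(K,L))$ as described in Remark \ref{R:Met}(1). Naturality in $K$ is immediate since all maps in sight are given by precomposition.

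For tensors I would apply the construction above to $\ck^{\op}$. This is again a $\Met$-enriched category with complete and cocomplete underlying category, and — since isometries in $\ck^{\op}$ are exactly the coisometries of $\ck$ and conversely — it again satisfies Assumption \ref{A:enr}; hence $\ck^{\op}$ has cotensors, which are precisely the tensors of $\ck$ (the cotensor condition in $\ck^{\op}$ reads $\ck(M\otimes L,K)\cong\Met(M,\ck(L,K))$). Alternatively one repeats the argument directly, replacing $P_0$ by the conical copower $\coprod_{x\in M}L$ and the isometric $\eps$-equalizers by coisometric $\eps$-coequalizers of the pairs of coprojections. With all tensors and cotensors at hand, Theorem \ref{weighted} yields all weighted limits and colimits in $\ck$.

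The only non-formal step — and the one I would expect to need care — is the claim that $\ck(K,-)$ carries the isometric $\eps_{xy}$-equalizer $e_{xy}$ to the correct subspace of $\prod_{x\in M}\ck(K,L)$: that its image realizes the prescribed ``$\eps_{xy}$-ball'' condition (from the universal property of $\eps$-equalizers together with their being monic, the dual of Remark \ref{R:coeq}) and that the induced metric is the restricted sup metric (this is exactly where the hypothesis that $\eps$-equalizers are \emph{isometries} is used, via Remark \ref{isometry}(5)). Together with the elementary description of wide pullbacks of subspaces in $\Met$, everything else is bookkeeping.
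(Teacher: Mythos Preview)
Your proposal is correct and follows essentially the same strategy as the paper: reduce to Theorem~\ref{weighted} by constructing cotensors (and dually tensors) from $\eps$-equalizers indexed by pairs of points of $M$, assembled via a conical limit. The paper organizes this in two stages---first building $[M_{x,y},L]$ for each two-point subspace $M_{x,y}$ as the $\eps_{xy}$-pullback of $\id_L$ along itself, then taking the limit over the diagram of these---whereas you do it in one step as the intersection of the $\eps_{xy}$-equalizers inside the conical power $\prod_{x\in M}L$; the content is the same.
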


\begin{proof}
We only need to prove that cotensors exist. The dual result then yields tensors and we can apply the Theorem \ref{weighted}. 

Consider a metric space $ M$. We present cotensors for all objects $L$.

(1) If $M$ is the one-element space, then cotensors are trivial: $[M,L] = L$. 

(2) If $M$ has just two elements of distance $\eps$, then $\Met(M,\ck(K,L))$ is given by a parallel pair $u_1,u_2:K \to L$ in $\ck$ of distance $\eps$. Form the following $\eps$-pullback
$$
\xymatrix@=3pc{
	P_\eps\ar[r]^{v_\eps} \ar[d]_{u_\eps} & L\ar[d]^{\id_L} \\
	L \ar[r]_{\id_L} & L
}
$$
Then $u_\eps,v_\eps$ is a universal parallel pair of distance $\eps$. Indeed, 
these morphisms are collectively isometric by \ref{L:epush} and \ref{iso-ext}(2). 
Hence $P_\eps$ is the desired cotensor $[M,L]$ due to the natural isomorphism
$$
\ck(K,P)\cong\Met(M,\ck(K,L)).
$$

(3) Let $M$ be an arbitrary metric space. Form all subspaces $M_{x,y}$ on at most two elements $x,y \in M$.
Then $M$ is a canonical colimit of the diagram of all these subspaces and all inclusions $M_{x,x} 
\hookrightarrow M_{x,y}$, where the colimit maps are also the inclusions. By the above items we get a diagram of all 
cotensors $[M_{x,y},L]$ and all the derived morphisms  $[M_{x,y},L]  \to  [M_{x,x},L]$. The desired cotensor is then
a limit of this diagram:

$$
[M,L] = \lim_{(x,y) \in M \times M}  [M_{x,y},L]. 
 $$
Indeed, for every object $K$ the desired natural isomorphism is obtained as the following composite
$$
\ck(K,[M,L]) =\ck(K,\lim [M_{x,y},L])\cong\lim\ck(K,[M_{x,y},L])\cong\lim\Met(M_{x,y},\ck(K,L)),
$$
where we used the fact that limits are conical (\ref{epush}), thus preserved by
$\ck(K,-)$, and then we applied the universal property of cotensors. From this we get

$$
\ck(K,[M,L]) \cong\Met(\colim M_{x,y},\ck(K,L)) =  \Met(M,\ck(K,L)) ).
$$
\end{proof}

\section{Approximate Injectivity and Approximate Smallness}

Throughout this section we assume that a class $\ch$ of morphisms in a category
satisfying \ref{weighted2} (and thus having weighted limits and colimits) is given.
We now come to the central concepts of our paper: objects that are approximately injective or approximately small with respect to $\ch$.

In an ordinary category an object $X$ is called injective w.r.t.\ $\ch$ if for every member $h\colon A\to A'$ of $\ch$ all morphisms $f\colon A\to X$ factorize through $h$ (i.e. $f=f'h$ for some $f':A'\to X$). We denote by
$$
\Inj \ch
$$
the class of all such objects. Classes of objects of this form are called \textit{injectivity classes}. 

\begin{defi}[See \cite{RT}]\label{ain}
{\em
 (1) An object $X$ is \textit{approximately injective} w.r.t.\ $\ch$ if for every member $h\colon A\to A'$ of $\ch$ and every  morphism $f\colon A\to X$ there exist $\varepsilon$-factorizations $f'\colon A'\to X$ through $h$ for all $\varepsilon >0$:
$$
\xymatrix@=1pc{
A \ar[rr]^{h} \ar[ddr]_{f} && A' \ar[ddl]^{f'}\\
& \sim_\varepsilon&\\
& X &}
$$
 (2) The class of all these objects is denoted by
$$
\Inj_{\ap} \ch\,.
$$
An \textit{approximate injectivity class} is a class of objects of the form $\Inj_{\ap} \ch$. 
}
\end{defi}

\begin{rem}
{
\em
An object $X$ is approximately injective w.r.t. $h:A\to A'$ iff the induced morphism $\ck(h,X):\ck(A',X)\to\ck(A,X)$ is a coisometry (i.e. a dense mor\-phism of $\Met$).
This means that $X$ is $\ce$-injective in the sense of \cite{LaR} where $\ce$ is the class of coisometries in $\Met$.
}
\end{rem}

\begin{exam}
{
\em 
If $\ck$ is locally $\lambda$-presentable in the enriched sense (see Section 6), every approximate injectivity class is an injectivity class, as proved in \cite{RT}, but not conversely: see Example \ref{E:app}.
}
\end{exam}
 
\begin{rem}\label{R:inj-con}
{
\em 
Using the terminology from \cite{AHSo}, we say that a morphism $h$ is an \textit{injectivity consequence} of $\ch$ if $\Inj \ch \subseteq \Inj\{h\}$. That is, objects injective w.r.t.\ $\ch$ are also injective w.r.t.\ $h$. Here are two `approximate' versions:}
\end{rem}

\begin{defi}\label{D:cons}
{
\em
(1) A morphism $h$ is an \textit{approximate-injectivity consequence} of $\ch$
if $\Inj_{\ap}\ch \subseteq \Inj_{\ap}\{h\}$, i.e., objects approximately injective w.r.t.\ $\ch$ are also approximately injective w.r.t.\ $h$.  

(2) $h$ is called a \textit{strict approximate-injectivity consequence} of $\ch$ if $\Inj_{\ap} \ch \subseteq \Inj\{h\}$,
i.e., objects approximately injective w.r.t. $\ch$ are injective w.r.t $h$.  
}
\end{defi}

\begin{exam}\label{E:cons} 
{\em
 Given an $\varepsilon$-pushout

$$
  \xymatrix@=3pc{
 \ar[r]^h \ar[d]_{k} & \ar[d]^{k'} \\ 
     \ar [r]_{h'} &   
}  
$$
then $h'$ is a strict approximate-injectivity consequence of $h$.}
\end{exam} 

In fact, if $X$ is approximately injective w.r.t.\ $h$, then every morphism $u$ as in the  following diagram
$$
\xymatrix@C=3pc{
\ar[r]^h \ar[d]_{p} & \ar[d]^{p'} \ar@/^2pc/[ddd]^{v} \\ 
     \ar [r]^{h'}\ar[ddr]_{u} & \ar@{.>}[dd]^{w} \\
    &\\ & X
  }
$$
factorizes through $h'$. To see this, use the approximate injectivity of $X$ to choose a morphism $v$ with $up \sim_{\varepsilon} vh$. Then the universal property yields $w$ with $u=wh'$.

\begin{lemma}\label{L:cons} 
Let $h:A\to B$ and $h':A\to B'$ be morphisms. Given triangles
$$
  \xymatrix@=3pc{
  A \ar[r]^{h} \ar [d]_{h'} \ar[d]^<<<<{\ \ \sim_{1/n}} & B\\
  B' \ar[ur]_{f_n} &}
$$
$(n=1, 2, 3, \dots )$ then $h'$ is an approximate-injectivity consequence of $h$. 
\end{lemma}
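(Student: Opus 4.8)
The plan is to verify the inclusion $\Inj_{\ap}\{h\}\subseteq\Inj_{\ap}\{h'\}$ directly from the definition of approximate injectivity. So I would fix an object $X$ that is approximately injective with respect to $h$, an arbitrary morphism $g\colon A\to X$, and an arbitrary $\varepsilon>0$, and aim to produce a morphism $g'\colon B'\to X$ with $g'h'\sim_\varepsilon g$.

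The construction of $g'$ uses the hypothesis in two steps, each absorbing half of $\varepsilon$. First, applying approximate injectivity of $X$ with respect to $h$ to the morphism $g$ with tolerance $\varepsilon/2$, there is a morphism $\bar g\colon B\to X$ with $\bar g h\sim_{\varepsilon/2} g$. Second, I pick $n$ with $1/n<\varepsilon/2$ and use the assumed triangle $f_n h'\sim_{1/n} h$, where $f_n\colon B'\to B$; the candidate factorization is $g':=\bar g\cdot f_n$.

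The estimate is then routine: since composition in a $\Met$-enriched category is nonexpanding, post-composing with $\bar g$ does not increase distances, so $d(\bar g f_n h',\,\bar g h)\le d(f_n h',h)\le 1/n<\varepsilon/2$. Combining this with $d(\bar g h,g)\le\varepsilon/2$ and the triangle inequality gives $d(g'h',g)\le\varepsilon$, i.e.\ $g'h'\sim_\varepsilon g$. Since $g$ and $\varepsilon$ were arbitrary, $X\in\Inj_{\ap}\{h'\}$, which is what we wanted.

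There is essentially no genuine obstacle; the only point worth flagging is why the argument needs the triangles for \emph{all} $n$ rather than for a single approximation: the tolerance $\varepsilon$ is arbitrary and must be split between the approximate factorization through $h$ and the defect in $f_n h'\sim_{1/n} h$, so this defect must be allowed to shrink to zero. The whole content is the interaction of the ``for all $\varepsilon$'' quantifiers with nonexpansiveness of composition.
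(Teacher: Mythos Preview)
Your proof is correct and follows essentially the same approach as the paper's: both split the tolerance $\varepsilon$ in two, use approximate injectivity of $X$ with respect to $h$ for one half, the assumed triangle $f_nh'\sim_{1/n}h$ for the other, and combine via the triangle inequality and nonexpansiveness of composition. The only cosmetic difference is that the paper fixes $n$ with $2/n<\varepsilon$ first and then applies approximate injectivity with tolerance $1/n$, whereas you apply approximate injectivity with tolerance $\varepsilon/2$ first and then choose $n$; the resulting estimate is the same.
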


\begin{proof}
Let $X$ be approximately injective w.r.t.\ $h$ and let $u\colon A\to B$ be given:
$$
\xymatrix@=3pc{
A\ar [r]^{h} \ar [d]_{h'}  \ar@/^-2pc/[dd]_{u}& B \ar [ddl]^v\\
B' \ar[ur]_<<<<<{f_n} \ar @{.>}[d]&\\
X &
}
$$
For every $\varepsilon >0$ choose $n$ with $\frac{2}{n} <\varepsilon$. We have a morphism $v$ with $u\sim_{1/n} v \cdot h$ which implies $u\sim_\varepsilon (v\cdot f_n) \cdot h'$, as desired. This follows from $u\sim_{1/n} v\cdot h$ and $v\cdot h \sim_{1/n} v\cdot f_n \cdot h'$, thus, $u\sim_{2/n} (v\cdot f_n)\cdot h'$.
\end{proof}

\begin{rem}\label{R:comp} 
{
\em
(1) Recall the concept of a \textit{transfinite composite} of morphisms:

(a) Given an ordinal $\alpha$, an $\alpha$-chain (of objects $K_i$, $i<\alpha$, and morphisms $k_{ji} \colon K_j \to K_i$ for 
$j\leq i$) is called \textit{smooth} if for every limit ordinal $i<\alpha$ we have
$$
K_i =\underset{j<i}{\colim}\ K_j
$$
with the colimit cocone $(k_{ji})_{j<i}$.

(b) Given a smooth $(\alpha +1)$-chain, the morphism $k_{0\alpha} \colon K_0\to K_\alpha$ is called the \textit{$\alpha$-composite} of the morphisms $(k_{i, i+1})_{i<\alpha}$.

Thus $\alpha=2$  yields the usual concept of a composite of two morphisms $K_0 \to K_1\to K_2$ up to isomorphism of the codomain $K_2$.

The case $\alpha=0$ means that every isomorphism is a $0$-composite (of the empty set of morphisms).

(c) Transfinite composites are $\alpha$-composites where $\alpha$ is an arbitrary ordinal.
If $\alpha$ has cofinality at least $\lambda$, then these chains is $\lambda$-directed.
We then speak about $\lambda$-directed transfinite composites.

(2) The closure of $\ch$ under pushout and transfinite composite is denoted by $\cell(\ch)$ (the \textit{cellular morphisms for 
$\ch$}). A well-known fact is that every object injective w.r.t.\  $\ch$  is also injective w.r.t. cellular morphisms for 
$\ch$ (see e.g. \cite{AHRT}).
}
\end{rem}

In ordinary categories an object is called $\lambda$-small if its hom-functor preserves
$\lambda$-directed transfinite composites of cellular morphisms. We are using the appropriate
enriched variant:

\begin{defi}\label{lambda-small}
{
\em	
 An object $A$ is called \textit{$\lambda$-small} w.r.t.\ $\ch$ if $\ck(A, -):\ck\to\Met$ preserves  $\lambda$-directed transfinite composites of cellular morphisms for $\ch$.

Explicitly: for every $\lambda$-directed smooth chain $k_{ij} \colon K_i\to K_j$ where $k_{i,i+1} \in \cell(\ch)$ 
($i<\mu$) with a colimit $k_i \colon K_i \to K_\mu$ ($i<\mu$) the following hold:
\begin{enumerate}
\item[(a)]given a morphism $f\colon A\to K_\mu$, there is $i<\mu$ such that $f$ factorizes through $k_i$: we have $f= k_if',$
\end{enumerate}

\begin{enumerate}
\item[(b)]   if $k_{i\mu}f'\sim_\varepsilon k_{i\mu}f''$ for $f',f'':A\to K_i$ and $i<\mu$, then 
$k_{ij}f'\sim_\varepsilon k_{ij}f''$ for some $i\leq j<\mu$.
\end{enumerate}
}
\end{defi}

Let us introduce the corresponding  approximate concepts:

\begin{defi}\label{D:cell} 
{
\em 
(1) The class of transfinite composites of $\varepsilon$-pushouts of morphisms 
from $\ch$ (for all $\varepsilon>0$) is denoted by $\cell_{\ap}(\ch)$. Its members are called \textit{approximately cellular} morphisms for $\ch$.

(2) Let $\lambda$ be a regular cardinal. An object $A$ is called \textit{approximately  $\lambda$-small} w.r.t. $\ch$
if for every $\lambda$-directed transfinite composite $(k_{ij}:K_i\to K_j)_{i\leq j\leq\mu}$ of morphisms approximately cellular
for $\ch$ and every $\varepsilon>0$ we have that 
\begin{enumerate}
\item[(a)]  every morphism $f:A\to K_\mu$ has an $\varepsilon$-factorization $f':A\to K_i$ through $k_{i\mu}$ for some $i<\mu$,  i.e.\ $f\sim_\varepsilon k_{i\mu}f'$,
\item[(b)]  given $i<\mu$ with $k_{i\mu}f'\sim_\varepsilon k_{i\mu}f''$ for $f',f'':A\to K_i$, then $k_{ij}f'\sim_\varepsilon k_{ij}f''$ for some $i\leq j<\mu$.
\end{enumerate}
}
\end{defi}

\begin{rem}\label{small:iso}
{
\em

(1) Every $\lambda$-small object w.r.t. $\ch$ is approximately $\lambda$-small w.r.t. $\ch$. 

(2) If $\cell_{\ap}(\ch)$ consists of isometries, then condition (b) can be omitted in \ref{D:cell}.

(3) If $\ck$ is enriched over $\CMet$, then $A$ is approximately $\lambda$-small w.r.t. $\ch$ iff $\ck(A,-):\ck\to\CMet$ preserves
$\lambda$-directed transfinite composites of approximately cellular morphisms for $\ch$.
}
\end{rem}

From \ref{R:comp}(2) and \ref{E:cons} we conclude the following fact.

\begin{lemma}\label{L:cell}
All approximately cellular morphisms are strict injectivity consequences of the given class $\ch$.
\end{lemma}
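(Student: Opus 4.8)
\noindent
The plan is to deduce the statement from Example~\ref{E:cons}, which handles a single $\eps$-pushout, together with the well-known stability of (strict) injectivity under transfinite composition recalled in Remark~\ref{R:comp}(2). Concretely, for $h\in\cell_{\ap}(\ch)$ we must show that every object approximately injective with respect to $\ch$ is injective with respect to $h$, i.e.\ $\Inj_{\ap}\ch\subseteq\Inj\{h\}$, which is exactly Definition~\ref{D:cons}(2). Unravelling Definition~\ref{D:cell}(1), such an $h$ has the form $h=k_{0\alpha}$ for a smooth $(\alpha+1)$-chain $(k_{ij}\colon K_i\to K_j)_{i\le j\le\alpha}$ in which each link $k_{i,i+1}$ is an $\eps_i$-pushout of some member $h_i\in\ch$ (for some $\eps_i>0$).

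First I would handle the links. Fix $X\in\Inj_{\ap}\ch$ and $i<\alpha$. Since $X$ is approximately injective with respect to all of $\ch$, it is in particular approximately injective with respect to $h_i$; hence, by Example~\ref{E:cons} and the diagram chase immediately following it, $X$ is (strictly) injective with respect to the $\eps_i$-pushout $k_{i,i+1}$ of $h_i$. Thus $X$ is strictly injective with respect to \emph{every} link of the chain. Now I would run the standard transfinite recursion: from a given $f\colon K_0\to X$ set $f_0=f$; for each $i<\alpha$ use injectivity of $X$ with respect to $k_{i,i+1}$ to extend $f_i$ to $f_{i+1}\colon K_{i+1}\to X$ with $f_{i+1}k_{i,i+1}=f_i$; and at each limit ordinal $i\le\alpha$ let $f_i\colon K_i\to X$ be the morphism induced on $K_i=\colim_{j<i}K_j$ by the cocone $(f_j)_{j<i}$, using smoothness of the chain. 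The resulting cocone satisfies $f_ik_{ji}=f_j$ for all $j\le i$, so in particular $f_\alpha k_{0\alpha}=f_0=f$ and $f$ factorizes through $h$. Hence $X\in\Inj\{h\}$.

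The only point that is not merely bookkeeping — and it is precisely the content behind Remark~\ref{R:comp}(2), so it may legitimately be cited rather than reproven — is the transfinite recursion: one must check that the partial cocones $(f_j)_{j<i}$ stay compatible at every successor and limit stage, which is what makes the colimit of the smooth chain produce the $f_i$ needed at limit ordinals. An equivalent, more abstract packaging avoids even this: writing $\ch'$ for the class of all $\eps$-pushouts (over all $\eps>0$) of members of $\ch$, the argument above shows $\Inj_{\ap}\ch\subseteq\Inj\ch'$, and since $\cell_{\ap}(\ch)$ consists of transfinite composites of members of $\ch'$ we have $\cell_{\ap}(\ch)\subseteq\cell(\ch')$, whence $\Inj_{\ap}\ch\subseteq\Inj\ch'\subseteq\Inj\cell(\ch')\subseteq\Inj\cell_{\ap}(\ch)$ by Remark~\ref{R:comp}(2); the outer inclusion is the assertion.
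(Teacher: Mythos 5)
Your proof is correct and follows exactly the route the paper takes: the paper derives the lemma in one line from Example~\ref{E:cons} (each $\eps$-pushout link) together with Remark~\ref{R:comp}(2) (closure of injectivity under transfinite composition), which is precisely the two-step argument you spell out. Your concluding abstract packaging via $\ch'$ and the inclusion $\cell_{\ap}(\ch)\subseteq\cell(\ch')$ is a clean way of making the citation of Remark~\ref{R:comp}(2) fully rigorous.
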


\begin{lemma}\label{L:copr}
A coproduct of less than $\lambda$ approximately $\lambda$-small objects w.r.t. $\ch$ is approximately $\lambda$-small w.r.t. $\ch$, i.e., if $h\in\cell_{\ap}(\ch)$ then $\Inj\ch\subseteq\Inj\{h\}$.
\end{lemma}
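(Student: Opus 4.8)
The plan is to reduce the statement to the approximate $\lambda$-smallness of the individual summands, the glue being the observation that a coproduct is a \emph{conical} colimit, so that its hom-spaces are $\Met$-products. Write the coproduct as $A=\coprod_{s\in S}A_s$ with $\card S<\lambda$, each $A_s$ approximately $\lambda$-small w.r.t.\ $\ch$, and let $\iota_s\colon A_s\to A$ be the injections. By Theorem \ref{weighted2} the coproduct exists as a conical colimit of the discrete diagram $(A_s)_{s\in S}$, being a weighted colimit with trivial weight (dual of Example \ref{weighted1}(1)); equivalently, since $\ck$ has $\eps$-equalizers for all $\eps$ (Assumption \ref{A:enr}), the dual of Lemma \ref{L:epush} forces every colimit of $\ck$ to be conical. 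Either way the cocone $(\iota_s)$ is collectively isometric, which by Remark \ref{conical} means: for any parallel $u,v\colon A\to K$ one has $d(u,v)=\sup_{s\in S}d(u\iota_s,v\iota_s)$, that is, $u\sim_\eps v$ iff $u\iota_s\sim_\eps v\iota_s$ for every $s\in S$. (The forward implication is in any case immediate, since composition is nonexpanding; it is conicity that yields the converse.)

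Now fix a $\lambda$-directed transfinite composite $(k_{ij}\colon K_i\to K_j)_{i\le j\le\mu}$ of morphisms in $\cell_{\ap}(\ch)$ --- so $\operatorname{cf}(\mu)\ge\lambda$ by the terminology of Remark \ref{R:comp}(1) --- and an $\eps>0$; we check the two conditions of Definition \ref{D:cell}. For (a), given $f\colon A\to K_\mu$, apply condition (a) for each $A_s$ to $f\iota_s$ to obtain $i_s<\mu$ and $f_s'\colon A_s\to K_{i_s}$ with $f\iota_s\sim_\eps k_{i_s\mu}f_s'$. Since $\card S<\lambda\le\operatorname{cf}(\mu)$, there is $i<\mu$ with $i_s\le i$ for all $s$; replacing $f_s'$ by $k_{i_s i}f_s'$ we may assume $i_s=i$ for all $s$, and the induced morphism $f'=[f_s']_{s\in S}\colon A\to K_i$ then satisfies $(k_{i\mu}f')\iota_s=k_{i\mu}f_s'\sim_\eps f\iota_s$ for every $s$, whence $k_{i\mu}f'\sim_\eps f$ by conicity. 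For (b), let $f',f''\colon A\to K_i$ with $k_{i\mu}f'\sim_\eps k_{i\mu}f''$. Then $k_{i\mu}(f'\iota_s)\sim_\eps k_{i\mu}(f''\iota_s)$ for each $s$ (composition nonexpanding), so condition (b) for $A_s$ yields $i\le j_s<\mu$ with $k_{ij_s}(f'\iota_s)\sim_\eps k_{ij_s}(f''\iota_s)$. Choosing $j<\mu$ above all $j_s$ (again $\card S<\operatorname{cf}(\mu)$) and precomposing with the nonexpanding $k_{j_sj}$ gives $(k_{ij}f')\iota_s\sim_\eps(k_{ij}f'')\iota_s$ for every $s$, hence $k_{ij}f'\sim_\eps k_{ij}f''$ by conicity. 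This shows $A$ is approximately $\lambda$-small w.r.t.\ $\ch$.

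The only step with genuine content is the first one: the identification $\ck(A,K)\cong\prod_{s\in S}\ck(A_s,K)$ as metric spaces (the sup-metric), i.e.\ that the coproduct cocone is collectively isometric; this is the point I would state carefully, resting as it does on Assumption \ref{A:enr} via (the dual of) Lemma \ref{L:epush}, or on Theorem \ref{weighted2} exhibiting the coproduct as a trivial-weight weighted colimit. Everything after that is routine bookkeeping: each summand returns an index below $\mu$, there are fewer than $\lambda\le\operatorname{cf}(\mu)$ of them so they are bounded below $\mu$, and nonexpansiveness of composition propagates the $\eps$-estimates to the common stage. The degenerate case $S=\varnothing$ gives the initial object, whose hom-functor is constant at the one-point space and hence preserves all colimits, so it needs no separate treatment.
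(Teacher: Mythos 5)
Your proof is correct and follows essentially the same route as the paper's: apply approximate $\lambda$-smallness to each summand, use $\lambda$-directedness (with $\card S<\lambda$) to pass to a common stage, and conclude via conicity of the coproduct. You are merely more explicit than the paper about why a single index $i$ (resp.\ $j$) bounding all the $i_s$ (resp.\ $j_s$) exists and about where conicity is actually needed.
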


\begin{proof}
Let $u_t:A_t\to\coprod_{t\in I} A_t$ with $A_t$ approximately $\lambda$-small  where $|I|<\lambda$, and let a morphism 
$f:\coprod A_t\to K_\mu$  be given. Since $|I|<\lambda$ and $(k_{ij}:K_i\to K_j)_{i\leq j\leq\mu}$ is $\lambda$-directed, there exist $i<\mu$ such that for every $t\in I$ we have $f'_t:A_t\to K_i$ 
with $k_{i\mu}f'_t\sim_\varepsilon fu_t$. Let $f':\coprod A_t\to K_i$ be the induced morphism, i.e., $f'u_t=f'_t$ for every $t\in I$. Since coproducts are conical, we have $k_{i\mu}f'\sim_\varepsilon f$. Given morphisms $f',f''\colon \coprod A_t \to K_i$ with  
$k_{i\mu}f''\sim_\varepsilon k_{i\mu}f'$, we conclude $k_{i\mu}f'u_t\sim_\varepsilon k_{i\mu}f''u_t$ for each $t\in I$. There is 
$j\geq i$ such that $k_{ij}f'u_t\sim_\varepsilon k_{ij}f''u_t$ for each $t\in I$. Hence $k_{ij}f'\sim_\varepsilon k_{ij}f''$.
\end{proof}

\begin{theo}\label{T:cell} 
For every uncountable regular cardinal $\lambda$ and every $\varepsilon>0$ all approximately $\lambda$-small objects w.r.t. $\ch$ are stable under $\varepsilon$-pushouts.
\end{theo}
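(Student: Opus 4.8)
The plan is to verify directly that the $\varepsilon$-pushout $P$ of a span $B\xleftarrow{\,h\,}A\xrightarrow{\,k\,}C$ with $A,B,C$ approximately $\lambda$-small w.r.t.\ $\ch$ satisfies conditions (a) and (b) of \ref{D:cell}(2). The one structural fact I would lean on is that, by \ref{A:enr} together with the dual of \ref{iso-ext}(2), this $\varepsilon$-pushout is collectively coisometric, equivalently (using \ref{weighted2}) it is the weighted colimit: for every $X$ the hom-space $\ck(P,X)$ is the subspace $\{(u,v)\in\ck(B,X)\times\ck(C,X)\ :\ uh\sim_\varepsilon vk\}$ of the conical product, carrying the sup metric $d\big((u,v),(u',v')\big)=\max\{d(u,u'),d(v,v')\}$. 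Fix a $\lambda$-directed transfinite composite $(k_{ij}\colon K_i\to K_j)_{i\le j\le\mu}$ of approximately cellular morphisms with colimit cocone $(k_i\colon K_i\to K_\mu)$, and a real $\delta>0$.

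For (a): given $f\colon P\to K_\mu$, split it into its legs $u:=fk'\colon B\to K_\mu$ and $v:=fh'\colon C\to K_\mu$, so $uh\sim_\varepsilon vk$. Using approximate $\lambda$-smallness of $B$ and $C$, pass to a common index $i<\mu$ and pick $u_0\colon B\to K_i$, $v_0\colon C\to K_i$ factorizing $u$, $v$ through $k_{i\mu}$ up to an auxiliary $\delta'>0$ to be chosen later; then $k_{i\mu}(u_0h)\sim_{\varepsilon+2\delta'}k_{i\mu}(v_0k)$, i.e.\ at stage $i$ the two legs form a square commuting only up to $\varepsilon+2\delta'$, not up to $\varepsilon$.

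The heart of the proof --- and the step I expect to be the main obstacle --- is to promote this to a square commuting \emph{exactly} up to $\varepsilon$ at some later stage $j$, since only an exactly $\varepsilon$-commuting square at stage $j$ is fed back, by the strictly universal property of the $\varepsilon$-pushout, into a morphism $f'\colon P\to K_j$. This is precisely where uncountability of $\lambda$ enters: because $A,B,C$ are approximately $\lambda$-small, the comparison maps $\colim_i\ck(A,K_i)\to\ck(A,K_\mu)$ (and likewise for $B$, $C$) are dense isometric embeddings, so the relevant hom-spaces are governed by $\lambda$-directed colimits in $\Met$, and for those \ref{directed}(1)(2') guarantees that the distances among any fewer-than-$\lambda$ chosen elements are attained at a single connecting stage. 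Carrying the approximations of $u$ and $v$ along a countable net and using \ref{directed}(1)(2') to find one stage $j$ realizing that net, one obtains $u'\colon B\to K_j$, $v'\colon C\to K_j$ that still $\delta$-factorize $u$, $v$ but now satisfy $u'h\sim_\varepsilon v'k$ on the nose; equivalently, this is the assertion that for uncountable $\lambda$ the finite-limit construction ``$\varepsilon$-pullback of the hom-cospan'' commutes with the given $\lambda$-directed colimit. (In the $\Met$-enriched rather than $\CMet$-enriched generality one must watch the interplay of density and completeness here, which is the delicate point.) The pair $(u',v')$ induces $f'\colon P\to K_j$, and collective coisometry upgrades the two one-sided estimates to $k_{j\mu}f'\sim_\delta f$, proving (a); when $\cell_\ap(\ch)$ consists of isometries this simplifies, cf.\ \ref{small:iso}(2), but I would not rely on that.

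Condition (b) should be routine from the same description. Given $i<\mu$ and $f',f''\colon P\to K_i$ with $k_{i\mu}f'\sim_\delta k_{i\mu}f''$, write $f'\leftrightarrow(u',v')$, $f''\leftrightarrow(u'',v'')$; collective coisometry gives $k_{i\mu}u'\sim_\delta k_{i\mu}u''$ and $k_{i\mu}v'\sim_\delta k_{i\mu}v''$. Condition (b) of approximate $\lambda$-smallness applied to $B$ and to $C$ yields a common $j\ge i$ with $k_{ij}u'\sim_\delta k_{ij}u''$ and $k_{ij}v'\sim_\delta k_{ij}v''$, hence $k_{ij}f'\sim_\delta k_{ij}f''$. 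Finally, the quantifiers ``for every $\delta>0$'' and the passage from $\delta'$ to $\delta$ cause no trouble since the index poset is $\lambda$-directed, so the whole argument uses only \ref{weighted2}, \ref{A:enr}, \ref{iso-ext}(2), \ref{directed}(1), and \ref{D:cell}(2) for the three given objects.
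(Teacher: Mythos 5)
Your overall strategy for condition (b) and your reduction of (a) to the two legs of the pushout match the paper, but there is a genuine gap exactly at the step you yourself single out as ``the heart of the proof.'' After choosing $u_0,v_0$ at stage $i$ with $k_{i\mu}u_0\sim_{\delta'}u$ and $k_{i\mu}v_0\sim_{\delta'}v$, you only know $d(k_{i\mu}(u_0h),k_{i\mu}(v_0k))\leq\varepsilon+2\delta'$, and pushing forward to a stage $j$ where distances are \emph{attained} (via \ref{directed}(2')) can only reproduce that same bound $\varepsilon+2\delta'$ --- it cannot shrink it to $\varepsilon$. So ``realizing a countable net at one stage'' does not by itself produce a pair $(u',v')$ with $u'h\sim_{\varepsilon}v'k$ on the nose. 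The only way to extract such a pair from the net is to take a limit of the Cauchy sequence $(k_{i_nj}u_n,k_{i_nj}v_n)$ in $\ck(B,K_j)\times\ck(C,K_j)$, which requires completeness of the hom-spaces; the theorem is stated for $\Met$-enriched $\ck$ under Assumption \ref{A:enr} only, and you flag this ``interplay of density and completeness'' as delicate without resolving it. As it stands, condition (a) is not proved.

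The paper's proof avoids ever producing an exactly $\varepsilon$-commuting square at a finite stage. For each $n$ it takes the approximations with tolerance $\tfrac1n$, uses smallness of $A$ (its condition (b)) to land the resulting $(\varepsilon_0+\tfrac2n)$-commuting square at a stage $j$ that, by uncountable regularity of $\lambda$, can be chosen \emph{independently of $n$}, and then forms the $(\varepsilon_0+\tfrac2n)$-pushout $D_n$ of the original span. The strict universal property of $D_n$ yields $h_n\colon D_n\to K_j$, and the key observation is that the original $\varepsilon_0$-pushout $D$ is the colimit of the $\omega$-chain $D_1\to D_2\to\cdots$ of relaxed pushouts; the $h_n$ form a cocone and glue to the desired $f'\colon D\to K_j$. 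This relaxation-and-colimit device is the missing idea in your argument, and it is what makes the proof work without any completeness hypothesis on hom-spaces. If you restrict to $\CMet$-enriched categories your route can be completed (take the limit of the Cauchy sequence at stage $j$), but that proves a weaker statement than the one asserted.
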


\begin{proof}
Let us fix a number $\eps_0>0$ and let
\newcommand{\pullbackcorner}[1][dr]{\save*!/#1+1.2pc/#1:(1.5,-1.5)@^{|-}\restore}
$$
  \xymatrix@C=3pc@R=.51pc{
 A\ar[r]^u \ar[ddd]_{v} & B \ar[ddd]^{\bar v} \\ 
 &&&&\\
 &&&&\\
  C   \ar [r]_{\bar u}\ar[r]^<<<<<<<<<<<<<{{\!\varepsilon_0}} 
 & \ D \pullbackcorner
 }
 $$
be an $\varepsilon_0$-pushout with $A$, $B$ and $C$ approximately $\lambda$-small w.r.t. $\ch$. For a morphism $f\colon D\to K_\mu$ where
$(k_{ij}:K_i\to K_j)_{i\leq j\leq\mu}$ is given as in \ref{D:cell}(2) and given $\varepsilon>0$ we verify Conditions (a) and (b) 
for $f$.

Condition (a).  Since $C$ satisfies (a) we can find for $f\cdot \bar u \colon C \to K_\mu$ a $\frac 1n$-factorization $g_1$ through some $k_{i\mu}$ ($i<\mu$). Analogously for $f\cdot \bar v$ a $\frac1n$-factorization $g_2$ (through the same $k_{i\mu}$):
\renewcommand{\pullbackcorner}[1][dr]{\save*!/#1+1.2pc/#1:(1,-1)@^{|-}\restore}
\begin{equation}\label{e1}
  \xymatrix@C=3pc@R=.51pc{
 A\ar[r]^u \ar[ddd]_{v} & B \ar[ddd]^{\bar v} \ar[dddddrr]^{g_2}_{\sim_{1/n}}\\ 
 &&&&\\
 &&&&\\
  C   \ar [r]_{\bar u}^<<<<<<<<<<<<<<<<{{\!\!\!\!\!\varepsilon_0}} \ar[ddrrr]^{\!\!\!\!\!\!\sim_{1/n}}_{g_1}
 &\ D \pullbackcorner 
\ar[dr]^<<<{\,\,f} &&\\
&&K_\mu&\\
&&& K_i \ar[ul]_{\!\!\!k_{i\mu}}
 }
 \end{equation}

 We conclude for $\bar \varepsilon =\varepsilon_0 +\frac 2n$ that
 $$
 k_{i\mu} \cdot g_1\cdot  v \sim_{\bar\varepsilon} k_{i\mu}\cdot g_2\cdot u
 $$
 due to
 $$
 k_{i\mu}\cdot g_1\cdot  v \sim_{1/n} k_{i\mu}\cdot \bar u \cdot v \sim_{\varepsilon_0} k_{i\mu} \cdot \bar v \cdot u \sim_{1/n} 
 k_{i\mu}\cdot g_2\cdot u\,.
 $$
 Since $A$ is approximately $\lambda$-small, Condition (b) in \ref{D:cell}(2) implies that there exists $i\leq j<\mu$ such that $k_{ij}g_1u\sim_{\bar{\eps}}k_{ij}g_2v$:
\begin{equation}\label{e2}
\xymatrix@C=0.5pc{
A \ar[r]^u \ar[d]_v & B\ar[r]^{g_1} & K_i\ar[dd]^{k_{ij}}\\
C\ar[d]_{g_2}&\hskip 1cm \sim_{\varepsilon_0+\frac 2n}  & \\
K_i \ar[rr]_{k_{ij}} &&K_j
}
\end{equation}
This ordinal $j$ can be chosen independent  of $n$ ($\in \mathbb N$) since $\lambda$ is uncountable and regular. Let us form an $\bar \varepsilon$-pushout $D_n$ of $u$ and $v$:

\renewcommand{\pullbackcorner}[1][dr]{\save*!/#1+1.2pc/#1:(1.5,-1.5)@^{|-}\restore}
\begin{equation}\label{e3}
  \xymatrix@C=3pc@R=.51pc{
 A\ar[r]^u \ar[ddd]_{v} &\ B \ar[ddd]^{\bar v_n} \ar@{-->}[ddddr]^{k_{ij}\cdot g_1}&\\ 
 &&&&\\
 &&&&\\
  C   \ar [r]^{\bar u_n}^<<<<<<<<<<<<<{{{\bar \varepsilon}}} \ar@{-->}[drr]_{k_{ij}\cdot {g_2}}
 &\ D_n \pullbackcorner
\ar[dr]^<<<{\,\,h_n} &\\
&&K_j
}
\end{equation}
We obtain the unique factorization $h_n\colon D_n \to K_j$ as  indicated above.

The objects $D_n$, $n\geq 1$ form an $\omega$-chain, where $d_{nm} \colon D_n \to D_m$ is the obvious morphism we get from $\varepsilon_0 +\frac 1n \geq \varepsilon_0 +\frac 1m$ (for all $n\geq m$). The object $D$ is a colimit of this chain with colimit morphisms $d_n\colon D_n \to D$ uniquely determined by
\begin{equation}\label{e4}
\bar u = d_n \cdot \bar u_n\quad \mbox{and} \quad \bar v =d_n\cdot \bar  v_n\ \mbox{\  for all \ $n<\omega$}\,.
\end{equation}
And the morphisms $h_n$ form a cocone: if $n\leq m$ then $h_n = h_m \cdot d_{nm}$ because  
$$
h_n \cdot \bar{u}_n= k_{ij}\cdot g_2 = h_m \cdot \bar{u}_m = (h_m\cdot d_{nm})\cdot \bar{u}_n
$$
and analogously for $v_n$. Thus we get a unique
\begin{equation}\label{e5}
 f'\colon D\to K_j \quad \mbox{with}\quad f' \cdot d_n = h_n \quad (n\geq 1)\,.
 \end{equation}
 This is the desired $\varepsilon$-factorization of $f$: choose $n$ with $\frac 1n <\varepsilon$, then 
 $f\sim_\varepsilon k_{j\mu} \cdot f'$ because $f\sim_{1/n} k_{j\mu} \cdot f'$. Indeed,
to verify this last statement, we only need proving that  
$f\cdot \bar u\sim_{1/n} k_{j\mu} \cdot f'\cdot \bar u$ (and analogously for $\bar v$):
 \begin{align*} 
 f\cdot \bar u &\sim_{1/n} k_{i\mu} \cdot g_1 && \mbox{see \eqref{e1}}&&\\
  & = k_{j\mu} \cdot k_{ij} \cdot g_1 \\
  &= k_{j\mu} \cdot h_n \cdot \bar v_n && \mbox{see\eqref{e3}}&&\\
  & = k_{j\mu}\cdot f' \cdot d_n \cdot\bar  v_n && \mbox{see \eqref{e5}}&&\\
  &= k_{j\mu} \cdot f' \cdot \bar u && \mbox{see \eqref{e4}}.&&
  \end{align*}
  
Condition (b). Assume that for $i<\mu$ we have $k_{i\mu}\cdot f'\sim_\varepsilon k_{i\mu}\cdot f''$ for $f',f'':A\to K_i$. Since $C$ is approximately
$\lambda$-small and 
  $$
  f \cdot \bar u \sim_{1/n} k_{j\mu} \cdot f'\cdot \bar u\,, \quad f\cdot \bar u \sim_{1/n} c_j\cdot f'' \cdot \bar u
  $$
imply $k_{j\mu} \cdot f'\cdot \bar u\sim_{\frac 2n} k_{j\mu}\cdot f''\cdot \bar u$ (and analogously for $\bar v$), there exists $\bar j<\lambda$ such that $k_{j\bar j} \cdot f'\cdot \bar u \sim_{\frac 2n} k_{j\bar j} \cdot f''\cdot \bar u$ (and analogously for $\bar v$). Choose $n$ with $\varepsilon\geq \frac 2n$ to get  $k_{j\bar j}\cdot f' \cdot \bar u \sim_\varepsilon  k_{j\bar j} \cdot f''\cdot \bar u$ as well as $k_{j\bar j}\cdot f' \cdot \bar v \sim_\varepsilon k_{j\bar j} \cdot f'' \cdot \bar v$. By the dual of Example \ref{weighted1}(4), $\eps_0$-pushouts are (under our standing assumptions) weighted colimits, therefore the morphisms $\bar{u},\bar{v}$ are collectively coisometric. Thus we have proved that $k_{j\bar{j}}f'\sim_\eps k_{j\bar{j}}f''$, as desired.
\end{proof}

\begin{rem}
	{
		\em 
		For $\lambda=\aleph_0$ \ref{T:cell} does not hold, see \ref{counter}.  
	}
\end{rem} 
  
\begin{defi}\label{apgen}
{
\em  
For a regular cardinal $\lambda$, an object $A$ in $\ck$ will be called \textit{approximately $\lambda$-generated} if it is approximately $\lambda$-small w.r.t. the class of all isometries.
}
\end{defi} 

\begin{rem}\label{apgen1}
{
\em
The above concept was introduced in \cite{RT} 6.4 where, however, instead of $\lambda$-directed composites, general $\lambda$-directed colimits were used. For $\lambda = \aleph_0$ this makes no difference provided that isometries are closed under directed colimits in the category $\ck^\to$ of morphisms of $\ck$ (see \cite{AR} 1.7). This is true in $\Met$, $\CMet$ and $\Ban$.
}
\end{rem}

\begin{exams}
{\em 
(1) Finite metric spaces are  approximately $\aleph_0$-generated in $\Met$ (use \ref{directed}(2)).  

(2) Finite discrete spaces are approximately $\aleph_0$-generated in $\CMet$. Just use the fact that a directed colimit in $\CMet$
is a completion of that in $\Met$. Conversely:

}
\end{exams}

\begin{propo}
Every finite space which is approximately $\aleph_0$-generated in $\CMet$ is discrete.
\end{propo}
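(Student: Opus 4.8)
The plan is to prove the contrapositive: if a finite space $A$ is \emph{not} discrete — say it has distinct points $a_0,a_1$ with $r:=d(a_0,a_1)<\infty$ — then $A$ is not approximately $\aleph_0$-generated in $\CMet$. By Remark~\ref{apgen1} this reduces to exhibiting an $\aleph_0$-directed colimit of isometries in $\CMet$, with cocone $k_N\colon K_N\to K$, together with a morphism $f\colon A\to K$ and some fixed $\varepsilon>0$ such that \emph{no} $f'\colon A\to K_N$ satisfies $k_Nf'\sim_\varepsilon f$; condition (b) of \ref{D:cell}(2) plays no role here, by \ref{small:iso}(2) together with \ref{stable} and \ref{isometry}(3).

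The construction rests on two facts: directed colimits in $\CMet$ are completions of directed colimits in $\Met$ (\ref{directed}(3)), so the colimit acquires limit points that lie in no finite stage; and the distance in the colimit between such a new point and an old one is the \emph{infimum} of the corresponding stage distances (\ref{filtered}(2)), which need not be attained. I replace $a_0$ and $a_1$ by two sequences sitting ``just outside'' them. Let $L_0$ be the generalized metric space on $(A\setminus\{a_0,a_1\})\cup\{p_n\mid n\ge 1\}\cup\{q_n\mid n\ge 1\}$, where $d$ restricts to the metric of $A$ on $A\setminus\{a_0,a_1\}$ and
$$
d(x,p_n)=d(x,a_0)+\tfrac1n,\qquad d(x,q_n)=d(x,a_1)+\tfrac1n,\qquad d(p_n,p_m)=d(q_n,q_m)=\bigl|\tfrac1n-\tfrac1m\bigr|,
$$
while, decisively, $d(p_n,q_m)=r+\tfrac1n+\tfrac1m>r$ for all $n,m$. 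One checks (routine triangle inequalities) that $L_0$ is a generalized metric space; it fails to be complete, and its completion is $L=L_0\cup\{\hat a_0,\hat a_1\}$ with $\hat a_0=\lim_n p_n$, $\hat a_1=\lim_n q_n$, and $d(\hat a_0,x)=d(a_0,x)$, $d(\hat a_1,x)=d(a_1,x)$ for $x\in A\setminus\{a_0,a_1\}$, $d(\hat a_0,p_n)=d(\hat a_1,q_n)=\tfrac1n$, $d(\hat a_0,q_n)=r+\tfrac1n$, $d(\hat a_0,\hat a_1)=\lim_n d(p_n,q_n)=r$. Hence the map $f\colon A\to L$ with $f(x)=x$ for $x\ne a_0,a_1$ and $f(a_i)=\hat a_i$ is an isometric embedding, thus a morphism of $\CMet$. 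Finally, let $K_N$ be the finite (hence complete) subspace of $L_0$ on $(A\setminus\{a_0,a_1\})\cup\{p_1,\dots,p_N,q_1,\dots,q_N\}$; the inclusions $K_N\hookrightarrow K_{N+1}$ are isometries, this $\omega$-chain has $\Met$-colimit $L_0$, hence $\CMet$-colimit $L$, with cocone the inclusions $k_N\colon K_N\to L$.

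It remains to rule out $\varepsilon$-factorizations of $f$ for $\varepsilon:=\tfrac12\min\bigl(\{r\}\cup\{d(a_i,x)\mid i\in\{0,1\},\ x\in A\setminus\{a_0,a_1\}\}\bigr)$, which is a positive real since $A$ is finite. Among the points of $K_N$, the ones at distance $\le\varepsilon$ from $\hat a_0$ in $L$ are exactly the $p_n$ with $\tfrac1n\le\varepsilon$: every $x\in A\setminus\{a_0,a_1\}$ has $d(\hat a_0,x)=d(a_0,x)>\varepsilon$, and every $q_n$ has $d(\hat a_0,q_n)=r+\tfrac1n>r\ge2\varepsilon$. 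Symmetrically, only the $q_m$ with $\tfrac1m\le\varepsilon$ are within $\varepsilon$ of $\hat a_1$. Thus a morphism $f'\colon A\to K_N$ with $k_Nf'\sim_\varepsilon f$ would force $f'(a_0)=p_n$ and $f'(a_1)=q_m$ for such $n,m$; but then the distance of $f'(a_0)$ and $f'(a_1)$ in $K_N$ is $d(p_n,q_m)=r+\tfrac1n+\tfrac1m>r=d(a_0,a_1)$, contradicting that $f'$ is nonexpanding. Hence $\ck(A,-)$ does not preserve this colimit of isometries, so $A$ is not approximately $\aleph_0$-generated, as claimed.

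The only real obstacle is the design of $L_0$: one needs $d(p_n,q_m)$ to stay strictly above $r$ for \emph{all} $n,m$ while $d(p_n,q_n)\to r$, so that no finite stage can realize the colimit-distance $r$ by a nonexpanding map. This is exactly the phenomenon that breaks approximability, and it is the reason the statement is peculiar to $\CMet$: in $\Met$ there is no completion step, every point of a directed colimit already lives in some stage, and finite spaces are approximately $\aleph_0$-generated there. Verifying the triangle inequalities for $L_0$ and pinning down its completion are the only computations involved, and both are elementary.
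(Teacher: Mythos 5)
Your proof is correct and rests on exactly the same mechanism as the paper's: argue the contrapositive by exhibiting an $\omega$-chain of finite spaces and isometric inclusions whose $\CMet$-colimit (the completion) contains two points at distance $r$ that can only be $\varepsilon$-approximated at finite stages by points at distance strictly greater than $r$, contradicting nonexpansiveness of any $\varepsilon$-factorization. The only difference is that the paper realizes this more economically with the subspace $\{0,d\}\cup\{d+\tfrac1n\}$ of the real line and a map collapsing $A$ to two points, whereas you build a bespoke space into which $A$ embeds isometrically; both constructions are valid.
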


\begin{proof}
If a finite space $A$ is not discrete, we prove that it is not approximately $\aleph_0$-generated. Let $d$ be the minimum distance of distinct elements of $A$, then $0<d<\infty$. Choose $x$, $y\in A$ of distance $d$.
Consider the following subspace $K$ of the real line
$$
K= \{0,d\} \cup \Big\{d+\frac1n; n=1,2,3,\dots \Big\}\,.
$$
This is a colimit of the $\omega$-chain of its subspaces $K_r =\{0\} \cup \big\{ d+\frac 1n; n=1,2,\dots , r\big\}$ for $r=1,2,3\dots$ in $\CMet$. The function $f\colon A\to K$ mapping $x$ to $0$ and all other points to $d$ is clearly nonexpanding. Let $\eps<d$.
Then for no $r$ is there a nonexpanding $\eps$-factorization $g$ of $f$ through the colimit map $K_r \hookrightarrow K$ (since $d(x,y) =d$, we must have $g(x)=0$ and there  is no point  of $K_r\backslash \{0\}$ of distance at most  $d$ from $0$, thus, no possibility for $g(y)$). Therefore, $A$ is not approximately $\aleph_0$-generated.
\end{proof}

\begin{rem}\label{counter}
	{
		\em 
		For $\lambda=\aleph_0$ \ref{T:cell} does not hold. Consider the $\varepsilon$-pushout below in $\CMet$:
		\newcommand{\pullbackcorner}[1][dr]{\save*!/#1+1.2pc/#1:(1.5,-1.5)@^{|-}\restore}
		$$
		\xymatrix@C=3pc@R=.51pc{
			1\ar[r]^{\id} \ar[ddd]_{\id} & 1\ar[ddd] \\ 
			&&&&\\
			&&&&\\
			1  \ar[r]^<<<<<<<<<<<<<<<<{{\!\!\!\!\!\varepsilon}} 
			& \ D \pullbackcorner 
		}
		$$
		Then $D$ is a space of two points of distance $\varepsilon$. Although $1$ is approximately $\aleph_0$-generated, $D$ is not.  
	}
\end{rem} 
 
\begin{rem}\label{almost}
{\em 
Every finite space $A$ is `almost' approximately $\aleph_0$-generated in $\CMet$ in the following sense:
let $(k_{ij}:K_i\to K_j)_{i\leq j\leq\mu}$ a transfinite composite of isometries in $\CMet$
and let $f:A\to K_\mu$ be an isometry. Then for every $\varepsilon>0$ there is an $\varepsilon$-isometry $f':A\to K_i$ in the sense of \ref{e-iso} (1)
(not necessarily nonexpanding) with  $i< \mu$ such that $k_if'\sim_\varepsilon f$. In fact, there is $i<\mu$ such that
$$
\varepsilon\geq |d(k_if'x,k_if'y)-d(fx,fy)|=|d(f'x,f'y)- d(x,y)|.
$$
}
\end{rem}
 
\begin{lemma}
Let $\lambda$ be an uncountable regular cardinal and  $\ck$  be  $\CMet$-enriched.  Then every  approximately $\lambda$-small object 
$A$  is $\lambda$-small.
\end{lemma}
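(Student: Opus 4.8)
The plan is to prove the converse of Remark~\ref{small:iso}(1) by \emph{upgrading the approximate factorizations of Definition~\ref{D:cell}(2) to exact ones}, using two features of the situation: since $\ck$ is $\CMet$-enriched, every hom-space $\ck(A,K)$ is a \emph{complete} metric space, so a Cauchy sequence of morphisms $A\to K$ converges to a morphism $A\to K$; and since $\lambda$ is uncountable and regular, the length $\mu$ of any $\lambda$-directed transfinite composite has $\operatorname{cf}(\mu)\ge\lambda>\aleph_0$, so the supremum of a countable family of ordinals below $\mu$ stays below $\mu$. I would also note at the outset that a $\lambda$-directed transfinite composite of cellular morphisms for $\ch$ is, a fortiori, one of approximately cellular morphisms (an ordinary pushout being a $0$-pushout), so it is enough to fix such a composite $(k_{ij}\colon K_i\to K_j)_{i\le j\le\mu}$ with colimit cocone $k_i\colon K_i\to K_\mu$ and to verify conditions~(a) and~(b) of Definition~\ref{lambda-small} for it.

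Condition~(b) of~\ref{lambda-small} is word-for-word condition~(b) of~\ref{D:cell}(2), hence holds because $A$ is approximately $\lambda$-small. For condition~(a), let $f\colon A\to K_\mu$. For each $n\ge 1$ I would apply approximate $\lambda$-smallness with $\eps=1/n$, obtaining $i_n<\mu$ and $f_n\colon A\to K_{i_n}$ with $k_{i_n\mu}f_n\sim_{1/n}f$. Put $i=\sup_n i_n<\mu$ and replace $f_n$ by $g_n:=k_{i_n i}f_n\colon A\to K_i$, so $k_{i\mu}g_n\sim_{1/n}f$, and in particular $k_{i\mu}g_n\sim_{1/n+1/m}k_{i\mu}g_m$. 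Condition~(b) of~\ref{D:cell}(2), applied to each pair $(n,m)$, produces $j_{nm}\ge i$ with $k_{ij_{nm}}g_n\sim_{1/n+1/m}k_{ij_{nm}}g_m$; put $j=\sup_{n,m}j_{nm}<\mu$. In the complete metric space $\ck(A,K_j)$ the morphisms $h_n:=k_{ij}g_n=k_{i_n j}f_n$ satisfy $h_n\sim_{1/n+1/m}h_m$, so they form a Cauchy sequence with a limit $h\in\ck(A,K_j)$. Since composition with $k_{j\mu}$ is nonexpanding, hence continuous, and $k_{j\mu}h_n=k_{i_n\mu}f_n\to f$, we get $k_{j\mu}h=f$: an exact factorization of $f$ through $k_{j\mu}$, as wanted. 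Hence $A$ is $\lambda$-small.

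The point I expect to be the crux is organizational rather than deep: the ordinals $i$ and $j$ must be chosen \emph{uniformly}, independently of $n$ (respectively of the pair $(n,m)$), and this is precisely where uncountable regularity of $\lambda$ is used. Without it --- already for $\lambda=\aleph_0$ --- the suprema $\sup_n i_n$ and $\sup_{n,m}j_{nm}$ need not remain below $\mu$, and the conclusion then genuinely fails (cf.\ \ref{counter}). The other ingredients are routine: composition is $1$-Lipschitz in a $\Met$-enriched category, and Cauchy sequences of morphisms converge under $\CMet$-enrichment. The only remaining item needing a careful word is the passage, in the first paragraph, from cellular to approximately cellular morphisms.
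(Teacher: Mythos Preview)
Your proof is correct and follows the same route as the paper: build $\tfrac1n$-factorizations, use uncountable regularity of $\lambda$ together with condition~(b) to land them all at a single stage $K_j$, and take the Cauchy limit in the complete hom-space $\ck(A,K_j)$ to obtain an exact factorization. The paper compresses the middle step by citing Remark~\ref{directed}(1), whereas you unwind it explicitly via condition~(b) of Definition~\ref{D:cell}(2); otherwise the two arguments coincide.
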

\begin{proof}
We have to check condition (a) of \ref{lambda-small}. Let 
$(k_{ij}:K_i\to K_j)_{i\leq j\leq\mu}$ be a $\lambda$-directed transfinite composite of morphisms from $\ch$, and let  $f:A\to K_\mu$ be a morphism. For every $n>0$, there is $f'_n:A\to K_{i_n}$ such that $k_{i_n\mu}f'_n\sim_{\frac{1}{n}} f$. Hence $k_{i_n\mu}f'_n\sim_{\frac{1}{n}+\frac{1}{m}} k_{i_m\mu}f'_m$. Following \ref{directed}(1), there is $i>i_n$ for every $n$ such that $k_{i_ni}f'_n\sim_{\frac{1}{n}+\frac{1}{m}} k_{i_mi}f'_m$ (for all $n$, $m$). Thus $k_{i_ni}f'_n$ form  a Cauchy sequence in $\ck(A,K_i)$. Let $f':A\to K_i$ be its limit. Then $k_{i\mu}f'$ is the limit of $k_{i\mu}k_{i_ni}f'_n$ and thus $k_{i\mu}f'=f$.
\end{proof}

\begin{defi} 
{
\em
(1) A morphism $m\colon K\to L$ is called \textit{approximately split} if for every $\varepsilon >0$ there is a morphism $e\colon L\to K$ with $e\cdot m\sim_\varepsilon \id_K$.

(2) A full subcategory $\cl$ of  $\ck$ is \textit{closed under approximately split morphisms} if for every approximately split morphism $K\to L$ with $L$ in $\cl$ one also has $K$ in $\cl$.
}
\end{defi}

\begin{rem} \label{spl}
{
\em
Every approximate injectivity class is closed under approximately split morphisms (see \cite{RT} 5.5 and 3.4(3)).
}
\end{rem}

\begin{exam}\label{E:app}
{
	\em 
	The class $\CMet$ of complete metric  spaces is an injectivity class in $\Met$ which fails to be an approximate injectivity class.
	
	(1) To verify that $\CMet$ is not an approximate injectivity class in $\Met$, observe that in the real line the inclusion $m:(0,1]\to[0,1]$ is approximately split. Indeed, $g_n\colon  [0,1] \to (0,1]$ defined by $g_n(x) = x+\frac{1-x}{n}$ is  nonexpanding and fulfils $g_n\cdot m \sim_{1/n} \id$. Indeed, for every $x$ it fulfils $d\big(x,g_n(x)\big)\leq \frac 1n$ because $d^2\big(x, g_n(x)\big) =\frac{(x-1)^2}{n^2} < \frac{1}{n^2}$.
Thus, $\CMet$ is not closed under approximately split morphisms .

(2) $\CMet$ is an injectivity class: For every countable metric space $A$ denote by $i_A \colon A\to A^\ast$ its Cauchy completion. Then $\CMet$ is  the injectivity class of the (essentially small) set of all $i_A$'s. Indeed, a Cauchy sequence in a space $X$ has a limit  in $X$ iff for the embedding $f\colon A\hookrightarrow X$ of the  corresponding (countable) subspace a non-expanding extension to $A^\ast$ exists.

}

\end{exam}

\begin{exam}\label{finite}
{
\em
(1) Finite spaces are closed in $\Met$ under approximately split morphisms.
In fact, assume that an infinite metric space $B$ has an approximately split morphism $u:B\to A$ to a finite space $A$, we derive a contradiction. There are morphisms 
$s_n:A\to B$ such that $s_nu\sim_{\frac{1}{n}}\id_B$. Choose pairwise distinct elements $b_i\in B$, $i<\omega$ with $u(b_i)=u(b_j)$ for every $i,j<\omega$. 
Then given $i\ne j$ for every $n$ we conclude $b_i\sim_{2/n} b_j$:
$$
b_i\sim_{\frac{1}{n}} s_nu(b_i) = s_nu(b_j)\sim_{\frac{1}{n}} b_j.
$$
Hence $ b_i=b_j$, which is the desired contradiction.

(2) Separable metric spaces are also closed in $\Met$ under approximately split morphisms. Let $u:B\to A$ be approximately split and $B$ separable. Let $X$
be a countable dense subset of $A$. For $s_n$ as in (1) we have a countable dense subset $\bigcup\limits_{n<\omega} s_n(X)$ of $A$.
}
\end{exam}

\begin{propo}\label{split}
Let  $\lambda$ be  an uncountable regular cardinal. Then approximately $\lambda$-small objects w.r.t. $\ch$ are closed under approximately split morphisms.
\end{propo}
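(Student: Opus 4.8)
The plan is to verify the two defining conditions (a) and (b) of Definition \ref{D:cell}(2) directly for $K$, given an approximately split morphism $m\colon K\to L$ with $L$ approximately $\lambda$-small w.r.t.\ $\ch$. So I fix a $\lambda$-directed transfinite composite $(k_{ij}\colon K_i\to K_j)_{i\le j\le\mu}$ of approximately cellular morphisms for $\ch$; here $\lambda$-directedness forces $\operatorname{cf}(\mu)\ge\lambda$, in particular $\mu$ has uncountable cofinality, and this is exactly what makes the argument work. Throughout I use freely that composition in $\ck$ is nonexpanding, so that pre- or post-composing two $\varepsilon$-close morphisms with a fixed morphism keeps them $\varepsilon$-close.

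For Condition (a) I would argue as follows. Given $f\colon K\to K_\mu$ and $\varepsilon>0$, choose $e\colon L\to K$ with $e\cdot m\sim_{\varepsilon/2}\id_K$ (possible since $m$ is approximately split). Applying Condition (a) for the approximately $\lambda$-small object $L$ to $f\cdot e\colon L\to K_\mu$ and the number $\varepsilon/2$, I obtain $i<\mu$ and $g\colon L\to K_i$ with $f\cdot e\sim_{\varepsilon/2}k_{i\mu}\cdot g$. Then $f'=g\cdot m\colon K\to K_i$ does the job: $k_{i\mu}\cdot f'=k_{i\mu}\cdot g\cdot m\sim_{\varepsilon/2}f\cdot e\cdot m\sim_{\varepsilon/2}f$, so $f\sim_\varepsilon k_{i\mu}\cdot f'$.

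For Condition (b) I plan to iterate the approximate sections. Suppose $k_{i\mu}\cdot f'\sim_\varepsilon k_{i\mu}\cdot f''$ for $f',f''\colon K\to K_i$. For each $n\ge 1$ pick $e_n\colon L\to K$ with $e_n\cdot m\sim_{1/n}\id_K$. Precomposing with $e_n$ gives $k_{i\mu}\cdot(f'e_n)\sim_\varepsilon k_{i\mu}\cdot(f''e_n)$, so Condition (b) for $L$ yields $j_n$ with $i\le j_n<\mu$ and $k_{ij_n}\cdot f'e_n\sim_\varepsilon k_{ij_n}\cdot f''e_n$. Precomposing with $m$ and using $e_n m\sim_{1/n}\id_K$ on both sides, I get $k_{ij_n}\cdot f'\sim_{\varepsilon+2/n}k_{ij_n}\cdot f''$. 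Finally set $j=\sup_{n\ge 1}j_n$; since $\mu$ has uncountable cofinality, $j<\mu$, and clearly $i\le j$. Post-composing the last relation with $k_{j_n j}$ gives $k_{ij}\cdot f'\sim_{\varepsilon+2/n}k_{ij}\cdot f''$ for every $n$, hence $k_{ij}\cdot f'\sim_\varepsilon k_{ij}\cdot f''$, which is what Condition (b) demands.

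The one delicate point is the error bookkeeping in Condition (b): a single approximate section $e$ with $e m\sim_\delta\id_K$ would only produce $k_{ij}f'\sim_{\varepsilon+2\delta}k_{ij}f''$ with the witnessing ordinal $j$ depending on $\delta$, which is not enough. This is circumvented by letting the section $e_n$ sharpen as $n\to\infty$ and taking the supremum of the countably many resulting ordinals $j_n$ — and it is precisely here that uncountability of $\lambda$, hence of $\operatorname{cf}(\mu)$, is needed, so that this supremum still lies strictly below $\mu$. (For $\lambda=\aleph_0$ the supremum may reach $\mu$, and the statement should not be expected to hold.)
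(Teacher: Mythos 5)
Your argument is correct and is essentially the paper's own proof: condition (a) is handled by transporting $f$ along an $\varepsilon/2$-section and using smallness of $L$, and condition (b) by using a sequence of ever-sharper sections $e_n$, collecting the countably many witnesses $j_n$, and bounding them by some $j<\mu$ via the uncountable cofinality guaranteed by $\lambda$-directedness. The error bookkeeping ($\varepsilon+2/n$ for each $n$, then letting $n\to\infty$ at a single index $j$) matches the paper exactly.
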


\begin{proof}
Let $u:A\to B$ be an approximately split morphism with $B$ approximately $\lambda$-small w.r.t. $\ch$.  Consider  
$(k_{ij}:K_i\to K_j)_{i\leq j\leq\mu}$ from \ref{D:cell}(2). Choose a morphism $f:A\to K_\mu$. For every $\varepsilon >0$ and arbitrary morphisms $t:B\to A$ and $f':B\to K_i$ there is a  morphism $u$ such that $tu\sim_{\frac{\varepsilon}{2}}\id_A$ and 
$k_{i\mu}f'\sim_{\frac{\varepsilon}{2}} ft$. Hence $k_{i\mu}f'u\sim_{\frac{\varepsilon}{2}} ftu\sim_{\frac{\varepsilon}{2}} f$ and thus 
$k_{i\mu}f'u\sim_\varepsilon f$. This verifies (a) in Definition~\ref{D:cell}. To verify  (b), assume that $k_{i\mu}f'\sim_\varepsilon k_{i\mu}f''$ for $f',f'':A\to K_i$. For $n>0$, there are morphisms $t_n:B\to A$ such that $t_nu\sim_{\frac{1}{n}}\id_A$
and we choose $j_n\geq i$ such that $k_{ij_n}f't_n\sim_\varepsilon k_{ij_n}f''t_n$. Hence
$$
k_{ij_n}f'\sim_{\frac{1}{n}} k_{ij_n}f't_nu\sim_\varepsilon k_{ij_n}f''t_nu\sim_{\frac{1}{n}} k_{ij_n}f''
$$
and thus $k_{ij_n}f'\sim_{\varepsilon +\frac{2}{n}} k_{ij_n}f''$. 
Since $\lambda$ is uncountable, we can choose
 $j>j_n$ for all $n$, and we get $k_{ij}f'\sim_\varepsilon k_{ij}f''$.
\end{proof}

\begin{theo}\label{T:ref}
Assume  that the domains of morphisms from $\ch$ are approximately small w.r.t. $\ch$. Then $\Inj_{\ap}\ch$
is weakly reflective in $\ck$  with approximately cellular weak reflections.
\end{theo}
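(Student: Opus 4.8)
The plan is to run an \emph{approximate small-object argument}. Fix a regular cardinal $\lambda$ such that every domain of a morphism in $\ch$ is approximately $\lambda$-small w.r.t.\ $\ch$ (this is exactly the hypothesis), and, as always for such arguments, assume the collections of lifting problems occurring below are sets. For an object $K$ I build a smooth chain $(k_{ij}\colon K_i\to K_j)_{i\le j\le\lambda}$ with $K_0=K$, where the passage $K_i\to K_{i+1}$ is itself a transfinite composite of $\eps$-pushouts of members of $\ch$. Namely, well-order the set of all triples $(h,f,n)$ with $h\colon A\to A'$ in $\ch$, $f\colon A\to K_i$ and $n\ge1$ as $(h_\xi,f_\xi,n_\xi)_{\xi<\theta_i}$; put $L_0=K_i$; at step $\xi$ form the $\tfrac1{n_\xi}$-pushout of the span $A'_{h_\xi}\xleftarrow{h_\xi}A_{h_\xi}\to L_\xi$ whose right leg is $f_\xi$ followed by $K_i\to L_\xi$, obtaining $L_\xi\to L_{\xi+1}$ and $g_\xi\colon A'_{h_\xi}\to L_{\xi+1}$ whose two composites $A_{h_\xi}\to L_{\xi+1}$ agree up to $\tfrac1{n_\xi}$; take colimits at limit ordinals and set $K_{i+1}=L_{\theta_i}$; and at limit ordinals $j$ put $K_j=\colim_{i<j}K_i$. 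Each $L_\xi\to L_{\xi+1}$ is a leg of an $\eps$-pushout of the member $h_\xi$ of $\ch$, so concatenating all these sub-chains exhibits $r:=k_{0\lambda}\colon K\to K_\lambda$ as a single smooth transfinite composite of $\eps$-pushouts of morphisms from $\ch$; thus $r\in\cell_{\ap}(\ch)$, and by Lemma~\ref{L:cell} $r$ is a strict approximate-injectivity consequence of $\ch$.

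Next I check $K_\lambda\in\Inj_{\ap}\ch$. Fix $h\colon A\to A'$ in $\ch$, a morphism $g\colon A\to K_\lambda$, and $\delta>0$. The chain $(K_i)_{i\le\lambda}$ is a $\lambda$-directed transfinite composite of morphisms approximately cellular for $\ch$ (here regularity of $\lambda$ is what makes the $\lambda$-chain $\lambda$-directed), and $A$ is approximately $\lambda$-small, so condition (a) of Definition~\ref{D:cell}(2) yields $i<\lambda$ and $f'\colon A\to K_i$ with $k_{i\lambda}f'\sim_{\delta/2}g$. Pick $n$ with $\tfrac1n\le\tfrac\delta2$; the triple $(h,f',n)$ appears in the enumeration used to build $K_i\to K_{i+1}$, hence was attacked there, giving $g''\colon A'\to K_{i+1}$ with $g''h\sim_{1/n}k_{i,i+1}f'$. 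Put $g':=k_{i+1,\lambda}g''$. Since composition in a $\Met$-enriched category is nonexpanding, $g'h\sim_{1/n}k_{i\lambda}f'\sim_{\delta/2}g$, so $g'h\sim_\delta g$. As $\delta$ was arbitrary, $K_\lambda$ is approximately injective w.r.t.\ $h$, and as $h$ was arbitrary, $K_\lambda\in\Inj_{\ap}\ch$.

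Weak reflectivity is then immediate: for $X\in\Inj_{\ap}\ch$ and $q\colon K\to X$, the fact that $r$ is a strict approximate-injectivity consequence of $\ch$ means $X\in\Inj\{r\}$, so $q$ factors through $r$; hence $r\colon K\to K_\lambda$ is a weak reflection of $K$ into $\Inj_{\ap}\ch$, approximately cellular by the first paragraph. I expect the main obstacle to be the organization of the construction rather than any single estimate: each successor step must attack every lifting problem $A\to K_i$ at every approximation level $\tfrac1n$ while remaining, internally, a transfinite composite of ordinary $\eps$-pushouts of single members of $\ch$ (so that $\cell_{\ap}(\ch)$-membership stays visible and Lemma~\ref{L:cell} applies), and the resulting $\lambda$-chain must be genuinely smooth so that the sub-chains concatenate into one transfinite composite and the $\lambda$-directedness demanded by approximate smallness is at hand; once this is set up correctly, the verification uses nothing beyond nonexpansiveness of composition.
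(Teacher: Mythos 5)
Your proposal is correct and follows essentially the same two-level small-object argument as the paper: an inner transfinite chain of $\tfrac1n$-pushouts attacking all spans into the current object, iterated $\lambda$ times into an outer smooth $\lambda$-chain, with approximate $\lambda$-smallness (condition (a) only) supplying the $\delta/2$-factorization through some stage and Lemma~\ref{L:cell} giving the strict factorization needed for weak reflectivity. The only difference is cosmetic — you merge the paper's two named constructions into one doubly-indexed chain.
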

  
 \begin{proof}
 For every object $K$ we construct a weak reflection in $\Inj_{\ap} \ch$ in two transfinite steps: we first define a morphism $t_K \colon K\to K^\ast$ as a composite of a certain transfinite chain formed by $\varepsilon$-pushouts of morphisms of $\ch$. Then we iterate this step from $K$ to $K^\ast$ transfinitely in order to obtain the desired weak reflection $\widehat K$.
 
Construction (1):
 Consider the set $\cx_K$  of all triples $(h,u,n)$ where $u$ and $h$ form a span
$$
\xymatrix@=3pc{ 
A \ar [r]^h \ar[d]_u & B\\K&
}
$$
with $h\in \mathcal H$ and $n>0$ is a natural number. Put $\mu_K = \card \cx _K$. We will index members of $\cx_\ck$ by ordinals 
$i < \mu_\ck$. That is, $\cx_K =\{(h_i, u_i, n_i); i<n_K\}$. We define a chain $k_{ij} : K_i \to K_j$,
$i\leq j \leq \mu_K$
and morphisms $b_i\colon B_i\to K_{i+1}$
 by the following transfinite recursion:

First step: $K_0 =K$.

Isolated step: $K_{i+1}$, $r_i$ and $k_{i, i+1}$ are given by an $\frac{1}{n_i}$-pushout as follows
\newcommand{\pullbackcorner}[1][dr]{\save*!/#1+1.5pc/#1:(1,-1)@^{|-}\restore}
$$
\xymatrix@C=4pc@R=2pc{
A_i \ar [r]^{h_{i}} 
\ar[d]_{u_i} & B_i\ar [dd]^{r_i}\\
K\ar[d]_{k_{0i}}&\\
K_i\ar[r]_{k_{i, i+1}} \ar[r]^<<<<<<<<<<<<<<<<<{^{_{^{\quad {1}\!/\!{n_i}}}}}&\ 
 K_{i+1} \pullbackcorner 
 } 
$$
We put $k_{j,i+1} = k_{i, i+1} \cdot  k_{j,i}$.\ for all $j<i$.

Limit step: $K_i$ is the colimit of the chain $(K_j)_{j<i}$ and $k_{ji} : K_j \to K_i$ are given as the colimit cocone for all $j<i$.

The object $K_{\mu_K}$ will be denoted by $K^\ast$ and the morphism $k_{0\mu_k} : K\to K^\ast$ by $t_K$. For every $i$ we obtain the following $\frac{1}{n_i}$-commutative square
$$
\xymatrix@C=4pc@R=2pc{
A_i \ar [r]^{h_{i}} \ar[dd]_{u_i} & B_i\ar [d]^{r_i}\\
& K_{i+1}\ar[d]^{k_{i+1,\mu_K}}\\
K \ar[r]_{t_K} & K^\ast
 }
$$

Construction (2): We are ready to construct a weak reflection $\widehat K$ of $K$. 
By assumption on $\ch$ there is a regular cardinal $\lambda$ such  that the domains of all morphisms in $\ch$ are $\lambda$-small. We define a $\lambda$-chain by iterating Construction (1) $\lambda$-times. That is, we define 
$m_{ij} : M_i \to M_j$, $i\leq j \leq \lambda$ by the following  transfinite recursion:

First step: $M_0 = K$.

Isolated step: $ M_{i+1}= M_i^\ast$ and $m_{i, i+1}=t_{M_i}$.

Limit step:  $M_i$ is the colimit of the chain $(M_j)_{j<i}$ with the colimit cocone $(m_{ji})_{j<i}$. In particular,  $M_\lambda$ is a colimit of $(M_j)_{j<\lambda}$.  We put
$$ 
\widehat K = M_\lambda \quad \mbox{and}\quad r_K = m_{0,\lambda}\colon K\to \widehat K\,.
$$

We will show that this is a desired weak reflection of $K$.
 
(2a) $\widehat K$ is approximately injective. Indeed, given $h:A\to B$ in $\ch$, a morphism $u:A\to \widehat K$ and $n>0$, we provide an $\frac{1}{n}$-factorization of $u$ through $h$ as follows. 
Since the object $A$ is $\lambda$-approximately small and $\widehat K$ is a directed colimit of $M_i$, $i < \lambda$, there is a $\frac{1}{2n}$-factorization
$$
\xymatrix@=3pc{
&M_i \ar[d]^{m_{i\lambda}}\ar[d]_>>{\sim_{1/2n}}\\
A \ar[ur]^{u'} \ar[r]_{u}& \widehat K
}
$$
of $u$ through $m_{i\lambda}$ for some $i<\lambda$ (i.e. the triangle is $\frac{1}{2n}$-commuting). Since the triple $(h,u',2n)$ 
lies in the set $\cx_{M_i}$, we obtain a $\frac{1}{2n}$-commutative square as follows
$$
\xymatrix@=4pc{\ar@{} [dr]|{\sim_{1/2n}}
A \ar [r]^{h} \ar[d]_{u'} &  B\ar[d]^{v}\\
M_i\ar[r]_{m_{i, i+1}} &M_{i+1}
}
$$
We have
$$
u\sim_{\frac{1}{2n}} m_{i\lambda}u^\prime = m_{i+1,\lambda} m_{i, i+1}u^\prime \sim_{\frac{1}{2n}} m_{i+1,\lambda}vg\,.
$$
Hence $u\sim_{\frac{1}{n}}m_{i+1,\lambda}vg $, which proves that  $\widehat K$ is approximately injective w.r.t. $\ch$.

(2b) Since $m_{0\lambda}:K\to  \widehat K$ is approximately cellular, it is the desired weak reflection (following Lemma 
\ref{L:cell}).
\end{proof}

\begin{rem}\label{soa}
{
\em
(1) In the proof of \ref{T:ref} we only need \ref{D:cell}(a).

(2) The proof mimics the construction of a weak reflection to $\Inj\ch$ (see e.g. 
\cite{AHRT}) -- just pushouts are replaced
by $\eps$-pushouts. By using pushouts, we would get $\bar{r}_K=\bar{m}_{0\lambda}:K\to \bar{K}$ with $\bar{K}$ approximately injective
w.r.t. $\ch$ but not a weak reflection to $\Inj_{\ap}\ch$. There is a morphism $t:\widehat K\to\bar{K}$ such that 
$tr_K=\bar{r}_K$ given by comparison morphisms from $\eps$-pushouts to pushouts.

(3) Observe that if $\ch'$ is the class of all the weak reflections $r_K:K\to\widehat{K}$, $K\in\ck$, then $\Inj_{\ap}\ch$
is the corresponding injectivity class:
$$
\Inj_{\ap}\ch=\Inj\ch'.
$$
}
\end{rem}

\begin{defi}
{
\em
The \textit{approximately cancellable closure} of  $\ch$  consists of all
morphisms $g:A\to B'$ such that for every $n>0$ there are morphisms $h:A\to B$ in $\ch$ and $f_n:B'\to B$ such that
$f_ng\sim_{\frac{1}{n}} h$.
}
\end{defi}

\begin{propo}\label{logic}
Suppose that domains of all morphisms of $\ch$ are approximately small w.r.t. $\ch$.
 Then a morphism  
in $\ck$ is an approximate injectivity consequence of $\ch$ iff  it  belongs to the approximately cancellable closure 
of $\cell_{\ap}(\ch)$. 
\end{propo}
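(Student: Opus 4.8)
The plan is to prove the two inclusions separately. Membership in the approximately cancellable closure of $\cell_{\ap}(\ch)$ will yield an approximate injectivity consequence easily via Lemma \ref{L:cell}, whereas the converse is essentially a repackaging of Theorem \ref{T:ref}; only the converse uses the smallness hypothesis.

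First I would treat the ``if'' direction. Suppose $g\colon A\to B'$ lies in the approximately cancellable closure of $\cell_{\ap}(\ch)$; I must show $\Inj_{\ap}\ch\subseteq\Inj_{\ap}\{g\}$. Take $X\in\Inj_{\ap}\ch$. By Lemma \ref{L:cell} we have $\Inj_{\ap}\ch\subseteq\Inj\{h\}$ for every $h\in\cell_{\ap}(\ch)$, so $X$ is (ordinarily) injective with respect to each such $h$. Now fix $u\colon A\to X$ and $\eps>0$ and choose $n$ with $\tfrac1n<\eps$. By definition of the approximately cancellable closure there are $h\colon A\to B$ in $\cell_{\ap}(\ch)$ and $f_n\colon B'\to B$ with $f_n g\sim_{1/n}h$. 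Using injectivity of $X$ w.r.t.\ $h$, factor $u=vh$ for some $v\colon B\to X$; since composition is nonexpanding, $u=vh\sim_{1/n}vf_ng$, so $vf_n\colon B'\to X$ is an $\eps$-factorization of $u$ through $g$. Hence $X\in\Inj_{\ap}\{g\}$. Note this argument uses neither the smallness hypothesis nor Theorem \ref{T:ref}.

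Conversely, suppose $g\colon A\to B'$ is an approximate injectivity consequence of $\ch$, i.e.\ $\Inj_{\ap}\ch\subseteq\Inj_{\ap}\{g\}$. Since the domains of morphisms of $\ch$ are approximately small w.r.t.\ $\ch$, Theorem \ref{T:ref} provides for the object $A=\dom g$ a weak reflection $r_A\colon A\to\widehat A$ into $\Inj_{\ap}\ch$ which is moreover approximately cellular, so $r_A\in\cell_{\ap}(\ch)$. As $\widehat A\in\Inj_{\ap}\ch\subseteq\Inj_{\ap}\{g\}$, the object $\widehat A$ is approximately injective with respect to $g$; applying this to the morphism $r_A\colon A\to\widehat A$ gives, for every $n>0$, a morphism $f_n\colon B'\to\widehat A$ with $r_A\sim_{1/n}f_ng$. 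Taking $h=r_A$ (the same for all $n$, so $B=\widehat A$ in the notation of the definition) together with these $f_n$ is precisely the defining condition for $g$ to belong to the approximately cancellable closure of $\cell_{\ap}(\ch)$, which completes the proof.

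I do not anticipate a real obstacle: the substance is packaged in Theorem \ref{T:ref}, which turns the object $\dom g$ into an approximately cellular morphism $r_A$ whose codomain lies in the approximate injectivity class, and in Lemma \ref{L:cell}, which makes members of $\cell_{\ap}(\ch)$ behave like genuine injectivity consequences for injective objects. The only care needed is bookkeeping with the $\varepsilon$'s (choosing $n$ with $\tfrac1n<\eps$), distinguishing the strict inclusion $\Inj_{\ap}\ch\subseteq\Inj\{h\}$ from the merely approximate one, and the observation that in the converse direction a single $h=r_A$ suffices for every $n$, slightly more than the closure definition formally requires.
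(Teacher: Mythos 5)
Your proof is correct and follows essentially the same route as the paper: the converse direction is verbatim the paper's argument via the approximately cellular weak reflection of Theorem \ref{T:ref}, and the forward direction unwinds by hand what the paper obtains by combining Lemmas \ref{L:cell} and \ref{L:cons}. If anything, your direct argument is slightly more careful, since it allows the cellular morphism $h$ to vary with $n$, which the paper's appeal to Lemma \ref{L:cons} (stated for a single fixed $h$) quietly glosses over.
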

\begin{proof}
Following \ref{L:cell} and \ref{L:cons}, every morphism from the approximately cancellative closure of $\cell_{\ap}(\ch)$ is an approximate injectivity consequence of $\ch$. Conversely, assume that a morphism $h\colon K\to L$ is an approximate injectivity consequence of $\ch$.
Let $r_K : K\to\widehat K$ be the approximately cellular weak reflection of \ref{T:ref}. Since $\widehat K$ is approximately injective, for each $n>0$ there is $f_n:L\to \widehat K$ such that $f_n \cdot g\sim{_\frac{1}{n}}r_K$. Hence $g$ belongs to the approximately cancellative closure of $\ch$.
\end{proof}

\begin{rem}
{
\em 
(1) Recall $\cell(\ch)$,  the class of cellular morphisms, from \ref{R:comp} (2). Let   $\overline{\cell}(\ch)$ be its cancellative closure given by morphisms $f$ with $hf\in\cell\ch$ for some $h$. Then
$$
\cell_{\ap}(\ch)\subseteq \overline{\cell}(\ch).
$$
Indeed, given an approximately cellular morphism $f:K\to L$, we take the corresponding cellular morphism $g:K\to M$ where we replace every $\varepsilon$-pushout by a pushout. Then $g=hf$ for some $h:L\to M$.

(2) If the domains of morphisms from $\ch$ are $\lambda$-small for some $\lambda$, then $\overline{\cell}(\ch)$ is the class of injectivity consequences of $\ch$. Thus every approximately cellular morphism w.r.t. $\ch$ is an injectivity consequence of $\ch$.

(3) Under the assumption of \ref{logic}, every strict approximate injectivity consequence of $\ch$ (see Definition~\ref{D:cons}) belongs 
to $\overline{\cell}_{\ap}(\ch)$.

Indeed, if $h:A\to B$ is a strict approximate injectivity consequence of $\ch$ and $r:A\to A^\ast$ is the approximately cellular weak reflection from \ref{T:ref}, then $r=gf$ for some $g$. 

On the other hand, one cannot expect that all morphisms from $\ch$ are strict approximate injectivity consequences of $\ch$.
}
\end{rem}

\section{Approximate injectivity in locally presentable categories}
We have recalled the concept of a locally presentable (ordinary) category in Remark \ref{R:Met}(2). Since we
work with $\Met$-enriched categories, we need the enriched concept. Let $\lambda$ be a regular cardinal. An object
$A$ is called \textit{$\lambda$-presentable in the enriched sense} if its hom-functor $\ck(A,-):\ck\to\Met$ preserves $\lambda$-directed colimits.
This means that (b) in Remark \ref{R:Met}(2) is strengthened to

(b') given $f',f'': A \to K_i$ then
$$d(k_if',k_if'') = \inf_{j\geq i}d(k_{i,j}f',k_{i,j}f'').$$

Following \cite{Ke}, a $\Met$-enriched $\ck$ is called \textit{localy $\lambda$-presentable in the enriched sense} if it has weighted colimits and a set
of $\lambda$-presentable objects in the enriched sense whose closure under $\lambda$-directed {colimits} is all of $\ck$.  Every such category has weighted limits.

For $\lambda$ uncountable, a $\Met$-enriched category $\ck$ with weighted colimits is locally $\lambda$-presentable in the enriched sense iff $\ck_0$ is locally $\lambda$-presentable and every $\lambda$-presentable object in $\ck_0$ is $\lambda$-presentable in the enriched sense.

\begin{propo}\label{coincide}
Let $\ck_0$ be locally $\lambda$-presentable. Then $\ck$ is locally $\lambda$-presentable 
in the enriched sense iff $\lambda$-presentable objects in $\ck_0$ are closed under $\eps$-coequalizers for every $\eps>0$.
\end{propo}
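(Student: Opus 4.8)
The plan is to reduce the proposition, via the remark that precedes it, to a statement about $\lambda$-presentable objects only, and then to treat the two resulting implications separately; throughout I take $\lambda$ uncountable, which is where that remark and Remark \ref{directed}(1) apply. Since $\ck_0$ is locally $\lambda$-presentable and $\ck$ has weighted colimits (\ref{weighted2}), that remark says $\ck$ is locally $\lambda$-presentable in the enriched sense precisely when every $\lambda$-presentable object of $\ck_0$ is $\lambda$-presentable in the enriched sense, i.e.\ satisfies the strengthened condition (b$'$). So it is enough to prove: \emph{every $\lambda$-presentable object of $\ck_0$ is $\lambda$-presentable in the enriched sense if and only if $\lambda$-presentable objects of $\ck_0$ are closed under $\eps$-coequalizers for every $\eps>0$.} I will also use the following: because $\ck$ then has conical $\lambda$-directed colimits agreeing with those in $\ck_0$, for $A$ $\lambda$-presentable in the enriched sense $\ck(A,K)$ is the $\lambda$-directed colimit $\colim_j\ck(A,K_j)$ computed in $\Met$, and there, by Remark \ref{directed}(1), the infimum in the distance formula over a finite set of elements is attained.

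For ``$(\Rightarrow)$'' of the proposition, assume every $\lambda$-presentable object of $\ck_0$ is $\lambda$-presentable in the enriched sense, let $A,B$ be $\lambda$-presentable in $\ck_0$, let $f_1,f_2\colon A\to B$, and let $c\colon B\to C$ be their $\eps$-coequalizer; it exists and is an epimorphism by \ref{A:enr} and \ref{R:coeq}. Fix a $\lambda$-directed colimit $k_i\colon K_i\to K$ in $\ck_0$; I verify conditions (a) and (b) of Remark \ref{R:Met}(2) for $C$. For (a): a morphism $g\colon C\to K$ yields $gc\colon B\to K$, which factors as $gc=k_ih$ for some $h\colon B\to K_i$ since $B$ is $\lambda$-presentable; then $d(k_ihf_1,k_ihf_2)=d(gcf_1,gcf_2)\le\eps$, and since $A$ is $\lambda$-presentable in the enriched sense this distance equals $\inf_{j\ge i}d(k_{ij}hf_1,k_{ij}hf_2)$, attained at some $j\ge i$ by the observation above; so $(k_{ij}h)f_1\sim_\eps(k_{ij}h)f_2$, whence $k_{ij}h$ factors through $c$ as $k_{ij}h=\bar hc$, and then $k_j\bar hc=k_jk_{ij}h=k_ih=gc$ forces $k_j\bar h=g$ because $c$ is epi. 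For (b): if $g',g''\colon C\to K_i$ satisfy $k_ig'=k_ig''$, then $k_i(g'c)=k_i(g''c)$, so $k_{ij}(g'c)=k_{ij}(g''c)$ for some $j$ since $B$ is $\lambda$-presentable, and $k_{ij}g'=k_{ij}g''$ because $c$ is epi. Hence $C$ is $\lambda$-presentable in $\ck_0$.

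For ``$(\Leftarrow)$'', assume $\lambda$-presentable objects of $\ck_0$ are closed under $\eps$-coequalizers and let $A$ be $\lambda$-presentable in $\ck_0$; I check (b$'$). Let $k_i\colon K_i\to K$ be a $\lambda$-directed colimit, $f',f''\colon A\to K_i$, and put $\eps=d(k_if',k_if'')$; since $\eps\le\inf_{j\ge i}d(k_{ij}f',k_{ij}f'')$ is automatic, it suffices to produce $j\ge i$ with $d(k_{ij}f',k_{ij}f'')\le\eps$. As $A+A$ is again $\lambda$-presentable and $K_i$ is a $\lambda$-directed colimit of $\lambda$-presentable objects, $[f',f'']\colon A+A\to K_i$ factors through a colimit map $e\colon L\to K_i$ with $L$ $\lambda$-presentable; let $g',g''\colon A\to L$ be its components, so $eg'=f'$, $eg''=f''$. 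Let $c\colon L\to Q$ be the $\eps$-coequalizer of $g',g''$; by hypothesis $Q$ is $\lambda$-presentable in $\ck_0$. Since $d((k_ie)g',(k_ie)g'')=d(k_if',k_if'')=\eps$, the morphism $k_ie$ factors through $c$ as $k_ie=\bar kc$, and since $Q$ is $\lambda$-presentable, $\bar k$ factors through some $k_j$ with $j\ge i$ (by $\lambda$-directedness), say $\bar k=k_jm$. Then $k_j(mc)=k_ie=k_j(k_{ij}e)$, so by $\lambda$-presentability of $L$ there is $j'\ge j$ with $(k_{jj'}m)c=k_{ij'}e$. Hence $d(k_{ij'}f',k_{ij'}f'')=d((k_{jj'}m)cg',(k_{jj'}m)cg'')\le d(cg',cg'')\le\eps$, using that composition is nonexpanding and $cg'\sim_\eps cg''$. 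Thus (b$'$) holds, so $A$ is $\lambda$-presentable in the enriched sense.

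The delicate point is the direction ``$(\Leftarrow)$'': the $\eps$-coequalizer must be interposed between a presentation $e\colon L\to K_i$ of the pair $(f',f'')$ by a $\lambda$-presentable object and the colimit $K=\colim_jK_j$, and one must track the three successive stages $i\le j\le j'$ — there is no canonical map back from $K$ to the $K_j$, so the factorizations have to be pushed forward step by step. A subtler-than-it-looks ingredient in ``$(\Rightarrow)$'' is that the infimum $\inf_{j\ge i}d(k_{ij}hf_1,k_{ij}hf_2)$ is genuinely \emph{attained}; this is exactly where uncountability of $\lambda$ (via Remark \ref{directed}(1)) enters, and without it the argument would only place $k_{ij}h$ in an $(\eps+\delta)$-coequalizer rather than in the $\eps$-coequalizer.
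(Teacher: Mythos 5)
Your proof is correct and follows essentially the same route as the paper's: one direction presents the pair $f',f''$ through a $\lambda$-presentable object of the canonical diagram over $K_i$, interposes the $\eps$-coequalizer, and pushes the factorizations forward through the chain; the other direction factors $gc$ through some $K_i$ and uses enriched presentability of $A$ together with epimorphy of $c$. If anything, you are a little more careful than the paper at two points --- making explicit that the infimum in condition (b$'$) is actually \emph{attained} via Remark \ref{directed}(1) (the paper silently uses this), and invoking $\lambda$-presentability of $L$ (rather than of the $\eps$-coequalizer's codomain, as the paper's text slips into) to merge the two factorizations.
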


\begin{proof}
(1) Assume that $\lambda$-presentable objects in $\ck_0$ are closed under $\eps$-coequalizers for every $\eps>0$. Let $A$
 be $\lambda$-presentable 
in $\ck_0$. Given a $\lambda$-directed colimit $k_i:K_i \to K$ $(i \in I)$, it is our task to prove that for every parallel pair of morphisms $f',f'':A \to K_i$ with $d(k_if',k_if'')\leq\varepsilon$ there exists a connecting morphism $k_{i,j}: K_i \to K_j$  
for some $j\in I$ with $d(k_{ij}f',k_{i,j} f'') \leq \varepsilon.$ Since $\ck_0$ is locally $\lambda$-presentable, $K_i$ is a $\lambda$-directed colimit
$l_\alpha:L_\alpha\to K_i$, $\alpha\in J$ of $\lambda$-presentable objects $L_\alpha$. As $A$ is $\lambda$-presentable in $\ck_0$,
there are $\alpha\in J$ and $g',g'':A\to L_\alpha$ with $l_\alpha g'=f'$ and $l_\alpha g''=f''$. Let $c:L_\alpha\to C$ be 
an $\varepsilon$-coequalizer of $g'$ and $g''$. We have, due to 
$$
d(k_il_\alpha g,'k_il_\alpha g'')= d(k_if',k_if'')\leq\varepsilon,
$$
a factorization $k_il_\alpha=hc$ for some $h:C \to K$. Since $C$ is an $\eps$-coequalizer of $\lambda$-presentable objects 
in $\ck_0$, it is $\lambda$-presentable in $\ck_0$. Hence there is $i\leq j'\in I$ and $h':C\to K_{j'}$ such that $h=k_{j'}h'$.

We have
$$
k_{j'}k_{ij'}l_\alpha=k_il_\alpha=hc=k_{j'}h'c
$$
Since $C$ is  $\lambda$-presentable in $\ck_0$, there is $j'\leq j\in I$ such that
$$
k_{j'j}k_{ij'}l_\alpha=k_{j'j}h'c.
$$
Consequently, $k_{ij}l_\alpha=k_{j'j}k_{ij'}l_\alpha$ factorizes through $c$. Thus
$d(k_{ij}l_\alpha g',k_{ij}l_\alpha g'')\leq\eps$. Therefore $d({k_{ij}f',k_{ij}f'')}\leq\eps$.

(2) Conversely, assume that $\ck$ is locally $\lambda$-presentable in the enriched sense.
Let $c:B\to C$ be an $\eps$-coequalizer of $u,v:A\to B$ with $A$ and $B$ $\lambda$-presentable, then we are to prove that $C$ is $\lambda$-presentable. 
Let $k_i:K_i \to K$ $(i \in I)$ be a $\lambda$-directed colimit and $f:C\to K$. There are $i\in I$ and $f':B\to K_i$ such that $fc=k_if'$. Hence $k_if'u\sim_\eps k_if'v$ and, since $B$ is $\lambda$-presentable in $\ck$, there is $i\leq j\in I$ such that $k_{ij}f'u\sim_\eps k_{ij}f'v$. Thus $k_if'$ factorizes through $c$,
$k_{ij}f'=f''c$. Hence $f$ factorizes through $k_j$: we have $f=k_jf''$ because $c$ is epic by \ref{R:coeq} and 
	$$cf=k_if'=k_jf''c.$$
The essential uniqueness of this factorization is evident since $c$ is a coisometry by Assumptions \ref{A:enr}.

\end{proof}

\begin{coro}\label{coincide1}
Let $\ck_0$ be locally $\lambda$-presentable with $\lambda$-presentable objects closed under coisometric quotients.
Then $\ck$ is locally $\lambda$-presentable in the enriched sense.
\end{coro}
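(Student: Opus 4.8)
The plan is to deduce the corollary directly from Proposition \ref{coincide}. That proposition asserts that, for $\ck_0$ locally $\lambda$-presentable, $\ck$ is locally $\lambda$-presentable in the enriched sense precisely when the $\lambda$-presentable objects of $\ck_0$ are closed under $\eps$-coequalizers for every $\eps>0$. So the entire task reduces to verifying this closure property, and for that we only need to feed in the hypothesis that $\lambda$-presentable objects are closed under coisometric quotients.

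To carry this out I would take an arbitrary parallel pair $u,v\colon A\to B$ with $B$ $\lambda$-presentable in $\ck_0$ and let $c\colon B\to C$ be its $\eps$-coequalizer, which exists by the standing Assumption \ref{A:enr}. By that same assumption $\ck$ has coisometric $\eps$-coequalizers, so $c$ is a coisometry; moreover, by Remark \ref{R:coeq}, every $\eps$-coequalizer is an epimorphism. Hence $c$ exhibits $C$ as a coisometric quotient of the $\lambda$-presentable object $B$, and the hypothesis on $\ck_0$ gives that $C$ is again $\lambda$-presentable. Since $u$, $v$ and $\eps$ were arbitrary, the $\lambda$-presentable objects of $\ck_0$ are closed under $\eps$-coequalizers, and Proposition \ref{coincide} then yields the claim.

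There is essentially no real obstacle here; the whole content is the reduction through Proposition \ref{coincide}. The one point to keep in mind is the reading of the phrase \emph{coisometric quotient}: it should be taken to mean an epimorphism (i.e. a quotient object) that is a coisometry, which is exactly what an $\eps$-coequalizer provides by Remark \ref{R:coeq}. Note that $\eps$-coequalizers need not be regular epimorphisms (Remark \ref{R:coeq}), so under a narrower reading of ``quotient'' as ``regular epimorphism'' the hypothesis would have to be strengthened to closure under coisometric epimorphisms; under the intended reading the deduction is immediate as sketched.
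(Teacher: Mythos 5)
Your proposal is correct and is precisely the paper's argument: the paper's proof simply cites Proposition \ref{coincide} together with Assumption \ref{A:enr}, and you have spelled out the same reduction, namely that $\eps$-coequalizers are coisometric epimorphisms and hence covered by the hypothesis on coisometric quotients. Your closing caveat about the reading of ``quotient'' is sensible but does not affect the argument.
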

\begin{proof}
This follows from \ref{coincide} and \ref{A:enr}.
\end{proof}

\begin{exam}\label{assume1}
{
\em
The categories $\Met$, $\CMet$ and $\Ban$ are locally $\aleph_1$-presentable in the enriched sense.

Indeed, we have seen in Section 2 that the underlying ordinary categories are locally $\aleph_1$-presentable. Thus, we only need to observe that for every regular cardinal $\lambda$ the two concepts of $\lambda$-presentable object coincide. This follows from the above corollary: recall from Section 2 that if $\lambda$ is uncountable, then presentability in $\Met$ is just the cardinality being smaller than $\lambda$. Such spaces are clearly closed under coisometric quotients. And the only $\aleph_0$-presentable space is the empty one.  The situation with the other two categories is analogous.
}
\end{exam}

\medskip

The following theorem improves \cite{RT} 5.8. Recall the set-theory axiom
\begin{center}
\textit{Weak Vop\v{e}nka's Principle}
\end{center}
stating that no full embedding $\Ord^{\op} \hookrightarrow \Gra$ exists. Here $\Ord$ is the ordered category of all ordinals and $\Gra$ the category of graphs. This principle implies that measurable cardinals exist, thus, its negation is consistent with set theory by \cite {AR}, A.7. 
Weak Vop\v{e}nka's Principle is also consistent with set theory by \cite{AR}, 6.21 and A.12. Moreover, Weak Vop\v{e}nka's Principle follows from  \textit{Vop\v{e}nka's Principle} which states that a large discrete category cannot be fully embedded into $\Gra$
(\cite[Remark 6.21]{AR}).

\begin{theo}\label{WVP}
Assume Weak Vop\v{e}nka's Principle. If $\ck$ is locally $\lambda$-presentable in the enriched sense, then a class of objects is an approximate injectivity class iff it is closed under products and approximately split morphisms.
\end{theo}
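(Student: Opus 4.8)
The plan is to show, first, that every class of the form $\Inj_{\ap}\ch$ is closed under products and approximately split morphisms, and, conversely, that any class $\cx$ with these two closure properties coincides with $\Inj_{\ap}\ch$ for a suitable class $\ch$ of weak reflections. For the first direction, closure under approximately split morphisms is Remark~\ref{spl}; closure under products uses that, by Lemma~\ref{L:epush}, all limits in $\ck$ are conical. Hence for $X_t\in\Inj_{\ap}\ch$ the product $P=\prod_{t}X_t$ satisfies $d(g,g')=\sup_t d(\pi_t g,\pi_t g')$ on each hom-metric $\ck(A,P)$, and given $h\colon A\to A'$ in $\ch$, $g\colon A\to P$ and $\eps>0$, I would pick $\eps$-factorizations $g'_t\colon A'\to X_t$ of $\pi_t g$ through $h$, form $g'=\langle g'_t\rangle_t$, and observe that $\pi_t(g'h)\sim_\eps\pi_t g$ for all $t$ forces $g'h\sim_\eps g$. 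So $P$ is approximately injective with respect to every $h\in\ch$, i.e.\ $P\in\Inj_{\ap}\ch$.

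For the converse, let $\cx$ be closed under products and under approximately split morphisms. A retraction $p$ of a morphism $m$ satisfies $pm\sim_\eps\id$ for all $\eps>0$, so $\cx$ is in particular closed under retracts in $\ck_0$, which is locally presentable because $\ck$ is locally $\lambda$-presentable in the enriched sense. At this point I invoke Weak Vop\v{e}nka's Principle: a full subcategory of a locally presentable category that is closed under products and retracts is weakly reflective (see \cite{AR}) --- the weak-reflection analogue of the fact that, under Vop\v{e}nka's Principle, limit-closed subcategories are reflective, and this is precisely the place at which the hypotheses of \cite{RT}~5.8 can be relaxed. Fix for each object $Y$ a weak reflection $r_Y\colon Y\to\widehat Y$, so that $\widehat Y\in\cx$ and every morphism from $Y$ into an object of $\cx$ factors (not necessarily uniquely) through $r_Y$.

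Now put $\ch=\{\,r_Y\mid Y\in\ck\,\}$ and check $\Inj_{\ap}\ch=\cx$. If $X\in\cx$, every morphism $Y\to X$ factors through $r_Y$, so $X$ is injective, hence approximately injective, with respect to each $r_Y$; thus $\cx\subseteq\Inj_{\ap}\ch$. Conversely, let $Z\in\Inj_{\ap}\ch$. Applying the approximate injectivity of $Z$ with respect to $r_Z\colon Z\to\widehat Z$ to the identity $\id_Z\colon Z\to Z$ (Definition~\ref{ain}(1)) gives, for every $\eps>0$, a morphism $e_\eps\colon\widehat Z\to Z$ with $\id_Z\sim_\eps e_\eps\cdot r_Z$; in other words $r_Z$ is approximately split. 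Since $\widehat Z\in\cx$ and $\cx$ is closed under approximately split morphisms, $Z\in\cx$. Hence $\Inj_{\ap}\ch\subseteq\cx$, and the two classes agree.

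I expect the main obstacle to be the set-theoretic step in the second paragraph, namely deducing weak reflectivity from closure under products and retracts in a locally presentable category under Weak Vop\v{e}nka's Principle. This is the standard mechanism by which the principle enters accessibility and injectivity arguments --- in the absence of a small weak solution set one builds a rigid $\Ord^{\op}$-indexed chain, contradicting the principle --- and everything surrounding it is bookkeeping. In particular the $\Met$-enrichment introduces no complications: $\ck$ and $\ck_0$ share the same morphisms and, by Lemma~\ref{L:epush}, the same conical products, so a weak reflection formed in $\ck_0$ serves equally well in $\ck$.
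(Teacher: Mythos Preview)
Your proof is correct and follows the same route as the paper, which simply defers to \cite{RT}, Theorem~5.8, and observes that the only set-theoretic ingredient needed there is the weak reflectivity of a full subcategory closed under products and split subobjects in a locally presentable category. The paper sharpens your citation for that step: the result was proved in \cite{AR93} under the \emph{Semi-Weak} Vop\v{e}nka Principle, and Wilson \cite{W} has since shown that this principle is equivalent to Weak Vop\v{e}nka's Principle, which is exactly why the hypothesis of \cite{RT}~5.8 can be relaxed as stated.
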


For the proof see \cite{RT}, Theorem 5.8. The stronger assumption that Vop\v{e}nka's Principle holds made there was not fully used in that proof: by inspecting the proof one sees that all that was needed was the result that a full subcategory of a locally presentable category closed under products and split subobjects in weakly reflective. However, this was proved in \cite{AR93} under the following assumption:
\begin{center}
\textit{Semi-Weak Vop\v{e}nka's Principle}
\end{center}
stating that no class $L_i$ ($i\in \Ord$) of graphs fulfils $\Gra (L_i, L_j)= \emptyset$ iff $i<j$. Recently  Wilson proved that this principle is equivalent to Weak Vop\v{e}nka's Principle \cite{W}.

\section{Banach spaces}

We now turn to the category $\Ban$ and apply our results to prove that the Gurarii space is essentially unique. 

\begin{rem}\label{e-iso3}
{
	\em
	(1) We will show that, in $\Ban$,  $\eps$-isometries of Definition \ref{e-iso1} coincide with the usual concept of an $\eps$-isometry of Banach spaces 
	of norm $\leq 1$. Recall that a linear mapping $f:A\to B$ between Banach spaces is called an $\varepsilon$-\textit{isometry}  if
	it satisfies
	\[
	\tag{$\sharp$}(1-\varepsilon)\parallel x\parallel\leq\parallel fx\parallel\leq (1+\varepsilon)\parallel x\parallel 
	\] 
	for every $x\in A$ (see \cite{ASCGM}). It is evident that it suffices to take $x$ with $\parallel x\parallel= 1$ only.   
	
	(2) It should be a folklore that a linear mapping $f:A\to B$ between Banach spaces satisfies ($\sharp$) iff it satisfies
	\[
	\tag{$\ast$}|\parallel fx\parallel - \parallel x\parallel|\leq\varepsilon 
	\]
	for every $x\in A$, $\parallel x\parallel\leq 1$. We provide a short proof for the convenience of the reader.
	
	Let $f$ satisfy ($\sharp$) and choose $x$ with $\parallel x\parallel\leq 1$ such that $\parallel fx\parallel >\parallel x\parallel$. Then
	$$
	\parallel fx\parallel - \parallel x\parallel \leq (1+\varepsilon)\parallel x\parallel - \parallel x\parallel=\varepsilon\parallel x\parallel\leq\varepsilon.
	$$
	If $\parallel fx\parallel\leq\parallel x\parallel$ then $(1-\varepsilon)\parallel x\parallel\leq\parallel fx\parallel$ and thus
	$$
	\parallel x\parallel -\parallel fx\parallel \leq\varepsilon\parallel x\parallel\leq\varepsilon.
	$$
	Therefore $f$ satisfies ($\ast$).
	
	Conversely, let $f$ satisfy ($\ast$) and choose $x$ with $\parallel x\parallel = 1$. The inequality $\parallel fx\parallel\leq (1+\varepsilon)\parallel x\parallel$ is evident for $\parallel fx\parallel\leq\parallel x\parallel$. Assume that $\parallel x\parallel\leq\parallel fx\parallel$.
	Following ($\ast$), 
	\[
	\frac{|\parallel fx\parallel-\parallel x\parallel|}{\parallel x\parallel|}=|\frac{\parallel fx\parallel}{\parallel x\parallel}-1|=|\parallel f(\frac{x}{\parallel x\parallel})\parallel -\parallel\frac{x}{\parallel x\parallel}\parallel|\leq\varepsilon
	\]
	and thus
	$$\parallel fx\parallel\leq (1+\varepsilon)\parallel x\parallel.
	$$
	The other inequality $(1-\varepsilon)\parallel x\parallel\leq\parallel fx\parallel$ is evident for 
	$\parallel x\parallel\leq\parallel fx\parallel$. Assume that $\parallel x\parallel\geq \parallel fx\parallel$. 
	Following ($\ast$),
	$$
	\parallel x\parallel-\parallel fx\parallel \leq\varepsilon\parallel x\parallel
	$$
	and thus 
	$$
	(1-\varepsilon)\parallel x\parallel\leq \parallel fx\parallel.  
	$$
	Hence $f$ satisfies ($\sharp$).
}
\end{rem}

\begin{lemma}
A morphism $f:A\to B$ in $\Ban$ is an $\eps$-isometry iff it satisfies ($\sharp$).
\end{lemma}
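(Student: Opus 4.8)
The plan is to prove the two implications of the equivalence separately: the ``only if'' direction is immediate from the reverse triangle inequality, while the ``if'' direction is handled by exhibiting, under $(\sharp)$, an explicit Banach space together with two isometries witnessing Definition~\ref{e-iso1}.

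First I would dispose of the easy direction. Suppose $f$ is an $\eps$-isometry, so there are isometries $g\colon B\to C$ and $h\colon A\to C$ with $gf\sim_\eps h$; by Notation~\ref{N:eps}(2) this says $\|g(f(a))-h(a)\|\le\eps\|a\|$ for all $a\in A$. Isometries in $\Ban$ are exactly the norm-preserving linear maps (apply Remark~\ref{isometry}(5) with $Q=\mathbb{C}$), so $\|g(f(a))\|=\|f(a)\|$ and $\|h(a)\|=\|a\|$; the reverse triangle inequality in $C$ then gives $\bigl|\,\|f(a)\|-\|a\|\,\bigr|\le\eps\|a\|$ for every $a$, which is $(\ast)$, hence $(\sharp)$ by Remark~\ref{e-iso3}(2).

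For the converse, assume $(\sharp)$. On the vector space $A\oplus B$ I would put the seminorm
$$
p(a,b)=\inf_{a'\in A}\bigl(\|a-a'\|+\|b+f(a')\|+\eps\|a'\|\bigr),
$$
which satisfies $p(a,b)\le\|a\|+\|b\|$ (take $a'=0$), let $C$ be the completion of $(A\oplus B)/\{p=0\}$ with canonical map $\pi$, and set $h(a)=\pi(a,0)$ and $g(b)=\pi(0,b)$. The key computations are $p(a,0)=\|a\|$ and $p(0,b)=\|b\|$: in both cases $a'=0$ gives ``$\le$'', while for ``$\ge$'' one uses $\|a-a'\|\ge\|a\|-\|a'\|$ together with the lower bound $\|f(a')\|\ge(1-\eps)\|a'\|$ from $(\sharp)$ in the first case, and $\|f(a')\|\le\|a'\|$ (since $\|f\|\le1$) in the second. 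Thus $h$ and $g$ are isometries. Finally $h(a)-g(f(a))=\pi(a,-f(a))$ has norm $p(a,-f(a))\le\eps\|a\|$ (take $a'=a$), so $gf\sim_\eps h$, and $f$ is an $\eps$-isometry by Definition~\ref{e-iso1}.

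The only step carrying real content is the tightness of those two norm estimates, which is precisely where $(\sharp)$ enters — consistent with the fact that the easy direction already shows $(\sharp)$ is necessary, so one does not expect to need the hypothesis anywhere else. The sole bookkeeping nuisance is that $p$ is a priori only a seminorm, so one must quotient by $\{p=0\}$ before completing; for $\eps>0$ a minimizing sequence for $p(a,b)=0$ forces $a'_n\to0$ and then $a=b=0$, so $p$ is already a norm and this step is vacuous. Alternatively the whole argument can be routed through Lemma~\ref{e-iso2}: the triple $(C,h,g)$ is designed to be the $\eps$-pushout of $\id_A$ and $f$, and verifying its universal property reduces the statement to ``$h$ is an isometry iff $(\sharp)$''; the direct route above simply sidesteps that universal property.
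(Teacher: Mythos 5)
Your proof is correct, and it follows the same route as the paper: the ``only if'' direction is the identical reverse-triangle-inequality argument, and the ``if'' direction is in substance the $\eps$-pushout of $\id_A$ and $f$, which is exactly what the paper invokes via Lemma~\ref{e-iso2}. The one real difference is that the paper outsources the key fact --- that under $(\sharp)$ the comparison map into this $\eps$-pushout is an isometry --- to the reference \cite{G}~2.1, whereas you build the witnessing space $C$ explicitly via the seminorm $p(a,b)=\inf_{a'}(\|a-a'\|+\|b+f(a')\|+\eps\|a'\|)$ and verify $p(a,0)=\|a\|$, $p(0,b)=\|b\|$ and $p(a,-f(a))\le\eps\|a\|$ by hand; your estimates are all correct (the lower bound for $p(a,0)$ is where $(\sharp)$ enters, the one for $p(0,b)$ only needs $\|f\|\le 1$), so your version is self-contained where the paper's is not. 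The only point worth making explicit is the identification of categorical isometries in $\Ban$ with norm-preserving contractions, which you use tacitly when concluding that $g$ and $h$ are isometries in the sense of Definition~\ref{D:iso}; this follows from testing against $Q=\mathbb{C}$ and homogeneity, as you indicate.
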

\begin{proof}
Following \cite{G} 2.1, if a morphism $f: A\to B$ in $\Ban$ satisfies ($\sharp$) then $f$ has the property from \ref{e-iso2} and thus it is an $\eps$-isometry. Conversely, let $f$ be an $\eps$-isometry.
Then
$$
\pa x\pa - \pa fx\pa= |\pa x\pa - \pa fx\pa| = |\pa hx\pa - \pa gfx\pa|\leq \pa hx - gfx\pa\leq\eps
$$ 
Hence $f$ satisfies ($\sharp$). 
\end{proof}

\begin{rem}\label{e-iso3a}
{
	\em
	If a morphism $f:A\to B$ in $\Ban$ is an $\eps$-isometry between finite-dimensional Banach spaces, then the Banach space $C$ from \ref{e-iso1} can be taken finite-dimensional. The reason is that the $\eps$-pushout from \ref{e-iso2} is finite-dimensional (see \cite{G} 2.1).
}
\end{rem}

\begin{rem}\label{e-iso}
{
	\em
	(1) A mapping $f:A\to B$ between metric spaces (not necessarily non-expanding) is called an $\varepsilon$-\textit{isometry}
	in \cite{HU}) if 
	\[
	\tag{$\sharp\sharp$}|d(x,y)-d(fx,fy)|\leq\varepsilon
	\]
holds for all $x,y\in A$. Let us show that a morphism $f:A\to B$ in $\Met$ satisfies ($\sharp\sharp$) iff it is 
	a $\frac{\eps}{2}$-isometry in our sense. 
	
	Let $f:A\to B$ be a $\frac{\eps}{2}$-isometry, i.e., there are isometries $g:B\to C$ and $h:A\to C$ such that 
	$gf\sim_{\frac{\eps}{2}}h$. Then, for $x,y\in A$, we have
	$$
	d(x,y)=d(hx,hy)\leq d(hx,gfx) + d(gfx,gfy) + d(gfy,hy)=d(fx,fy)+\varepsilon.
	$$
	Hence $f$ satisfies ($\sharp\sharp$).
	
	Conversely, assume that $f$ satisfies ($\sharp\sharp$). Consider the $\frac{\varepsilon}{2}$-pushout used in \ref{e-iso2}. Since $u$ is an isometry (see \ref{stable}), we have
	$$
	d(\overline{f}x,\overline{f}y)\geq d(\overline{f}x,ufx) + d(ufx,ufy) + d(ufy,\overline{f}y) = \varepsilon + d(fx,fy) + \varepsilon = d(fx,fy) + 2\varepsilon.
	$$
	Since $f$ satisfies ($\sharp\sharp$), $d(fx,fy) + 2\varepsilon\geq d(x,y)$. Following the construction of $\eps$-pushouts
	in $\Met$ from \cite{RT} 2.3, we have $d(\overline{f}x,\overline{f}y)\geq d(x,y)$. Hence $\overline{f}$ is an isometry.
}
\end{rem}

\begin{rem}\label{class}
{
\em
The class of all isometries in $\Ban$ is stable under pushouts (see \cite{ASCGM} A.19) and hence under $\eps$-pushouts 
(see \ref{stable}). Since isometries in $\Ban$ are closed under transfinite composites, they are both cellularly closed and approximately cellularly closed.
}
\end{rem}

In \cite {RT} 6.5(1) it is claimed that finite-dimensional spaces are approximately $\aleph_0$-generated. We now prove this claim using Definition \ref{apgen}. This is, as remarked in \ref{apgen1}, weaker than that of \cite{RT} but the proof works for the stronger variant as well.

\begin{propo}\label{fin-dim}
Finite-dimensional Banach spaces are approximately $\aleph_0$-generated.
\end{propo}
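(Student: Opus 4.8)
The plan is to verify directly the two defining conditions of Definition~\ref{D:cell}(2) for a finite-dimensional Banach space $A$, with $\ch$ the class of all isometries, as required by Definition~\ref{apgen}. By Remark~\ref{class} the isometries in $\Ban$ are approximately cellularly closed, so $\cell_{\ap}(\ch)$ consists of isometries. Hence it suffices to consider a directed transfinite composite $(k_{ij}\colon K_i\to K_j)_{i\le j\le\mu}$ (with $\mu$ a limit ordinal) in which every $k_{i,i+1}$, and therefore every $k_{ij}$ with $i\le j\le\mu$, is an isometric embedding. By Remark~\ref{directed}(4) the colimit $K_\mu$ is the completion of $\bigcup_{i<\mu}k_{i\mu}[K_i]$, so this union is dense in $K_\mu$. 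Throughout I use that isometries in $\Ban$ in the sense of Definition~\ref{D:iso} are exactly the usual isometric embeddings, so that each $\ck(A,k_{ij})$ is distance-preserving (Remark~\ref{isometry}(5)).

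Condition (b) is then immediate: if $k_{i\mu}f'\sim_\varepsilon k_{i\mu}f''$ for $f',f''\colon A\to K_i$, then since $\ck(A,k_{i\mu})$ preserves distances we already have $d(f',f'')\le\varepsilon$, whence $d(k_{ij}f',k_{ij}f'')\le\varepsilon$ for every $i\le j<\mu$. For condition (a), fix $f\colon A\to K_\mu$ and $\varepsilon>0$ and set $\delta=\varepsilon/2$. Choose an Auerbach basis $a_1,\dots,a_d$ of $A$, with coordinate functionals $a_1^\ast,\dots,a_d^\ast$ of norm $1$, so that $x=\sum_j a_j^\ast(x)a_j$ and $\sum_j|a_j^\ast(x)|\le d\|x\|$. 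Using density of $\bigcup_{i<\mu}k_{i\mu}[K_i]$ together with directedness of the chain, pick a single index $i<\mu$ and vectors $b_1,\dots,b_d\in K_i$ with $\|k_{i\mu}(b_j)-f(a_j)\|\le\delta/d$ for all $j$, and let $f'\colon A\to K_i$ be the linear map with $f'(a_j)=b_j$. For $\|x\|\le1$ this gives $\|k_{i\mu}f'(x)-f(x)\|\le\sum_j|a_j^\ast(x)|\,\|k_{i\mu}(b_j)-f(a_j)\|\le\delta$.

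The one point that needs care is that $f'$ need not be non-expanding. Since $k_{i\mu}$ is an isometric embedding, $\|f'(x)\|=\|k_{i\mu}f'(x)\|\le\|f(x)\|+\delta\le1+\delta$ for $\|x\|\le1$, so $\|f'\|\le1+\delta$ and hence $g':=(1+\delta)^{-1}f'$ is a morphism of $\Ban$. Then, for $\|x\|\le1$,
$$
\|k_{i\mu}g'(x)-f(x)\|\le\|g'(x)-f'(x)\|+\|k_{i\mu}f'(x)-f(x)\|\le\frac{\delta}{1+\delta}\|f'(x)\|+\delta\le 2\delta=\varepsilon,
$$
so $f\sim_\varepsilon k_{i\mu}g'$ and $g'$ is the required $\varepsilon$-factorization of $f$ through $k_{i\mu}$. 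This establishes conditions (a) and (b), so $A$ is approximately $\aleph_0$-small with respect to isometries, i.e.\ approximately $\aleph_0$-generated. The main (and essentially only) obstacle is precisely this rescaling step: approximating $f$ on a basis is elementary once the union is known dense, but the resulting linear map lies slightly outside the unit-ball category $\Ban$ and must be contracted back into it, which is harmless once the error budget $\varepsilon$ is split as $\delta+\delta$. The Auerbach basis serves only to bound $\sum_j|a_j^\ast(x)|$ by $d$ on the unit ball; any basis of $A$ would do, with $d$ replaced by a constant depending on $A$. Finally, the same argument goes through verbatim with the chain replaced by an arbitrary directed diagram of isometries (Remark~\ref{directed}(2),(4)), which yields the stronger variant of \cite{RT}.
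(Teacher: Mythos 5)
Your proof is correct, and its core mechanism --- approximate the images of a basis of $A$ inside some $K_i$ using density of $\bigcup_{i<\mu}k_{i\mu}[K_i]$, define $f'$ linearly on that basis, bound the error by a constant depending only on $A$, then rescale by $(1+\delta)^{-1}$ to land back in $\Ban$ --- is the same as the paper's. The differences are organizational but genuinely in your favor. The paper first treats the case where $f$ is an isometry: it shows the approximating map $f'$ is an $\varepsilon$-isometry in the sense of ($\sharp$), uses that to bound $\|f'\|\le 1+\varepsilon'$, rescales, and only then handles a general $f$ in a second step by factoring it as an epimorphism onto $f[A]$ followed by an isometry. Your observation that $\|f'(x)\|=\|k_{i\mu}f'(x)\|\le\|f(x)\|+\delta\le 1+\delta$ holds for \emph{any} nonexpanding $f$ makes the isometry case, the $\varepsilon$-isometry detour, and the image factorization all unnecessary; the Auerbach basis plays exactly the role of the paper's constant $r$ from equivalence of norms (either works). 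You also explicitly dispatch condition (b) of Definition~\ref{D:cell}(2) from the fact that the $k_{i\mu}$ are isometries, which the paper leaves to Remark~\ref{small:iso}(2); being explicit there is harmless and arguably clearer.
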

\begin{proof}
Suppose $A$ is a finite-dimensional Banach space and let $(k_{ij} \colon K_i \to K_j)_{i\leq j\leq \mu}$ be a transfinite composite of isometries.

I. Let $f\colon A\to K_\mu$  be an isometry. 

(a) For every $\varepsilon>0$ we prove that there exists an $\varepsilon$-isometry $f'\colon A\to K_i$ (not necessarily of norm $\leq 1$, i.e.
satisfying $\sharp$ in \ref{e-iso3}) for some $i<\mu$ with $f\sim_\varepsilon k_{i\mu}f'$.
 
Let $e_1,\dots,e_n$ be a basis of $A$. Since any two norms on a finite-dimensional Banach space are equivalent, there is a number $r$ such that 
$$
\sum_{0<j\leq n} |a_j|\leq r\parallel\sum_{0<j\leq n} a_je_j\parallel.
$$
Let $\delta = \frac{\varepsilon}{r}$. There are elements $u_1,\dots,u_n\in K_i$, $i<\mu$, such that 
$$
|\parallel u_j\parallel - \parallel e_j\parallel|=|\parallel k_iu_j\parallel- \parallel fe_j\parallel|\leq\delta
$$
for $j=1,\dots, n$. Let $f':A\to K_i$ be the linear mapping such that $f'e_j=u_j$ for $j=1,\dots,n$. 
We have
$$
\parallel(k_{i,\mu}f'-f)(\sum_{0<j\leq n} a_je_j)\parallel\leq\sum_{0< j\leq n} |a_j|\parallel(k_{i,\mu}f'-f)(e_j)\parallel\leq\sum_{0< j\leq n}|a_j|\delta\leqslant\varepsilon\parallel\sum_{0<j\leq n} a_je_j\parallel.
$$
Hence $k_{i,\mu}f'\sim_\varepsilon f$.

Following \ref{e-iso3}, $f'$ is an $\varepsilon$-isometry because
$$
|\parallel f'\sum_{0<j\leq n}a_je_j\parallel - \parallel\sum_{0<j\leq n}a_je_j\parallel| = |\parallel k_if'\sum_{0<j\leq b}a_je_j|
\parallel - \parallel f\sum_{0<j\leq n}a_je_j\parallel|
$$
and this value is at most
$$
 \parallel k_if'\sum_{0<j\leq n}a_je_j - f\sum_{0<j\leq n}a_je_j\parallel =
\parallel(k_if'-f)\sum_{0<j\leq n} a_je_j\parallel \leq\varepsilon
$$
provided that $\parallel\sum_{0<j\leq n}a_je_j\parallel\leq 1$.
 
(b) For every $\varepsilon>0$ we next prove that there is $i<\mu$ and a morphism $f'':A\to K_i$ such that $\parallel k_if'' - f\parallel\leq\varepsilon$. 
 
Take $f'$ from the proof of (a) for $\varepsilon'= \frac{\varepsilon}{2}$. Let $\parallel\sum_{0<j\leq n}a_je_j\parallel=1$. Then
$$
|\parallel f'(\sum_{0<j\leq n}a_je_j)\parallel -1|\leq\varepsilon'.
$$
If $\parallel f'(\sum_{0<j\leq n}a_je_j)\parallel\geq 1$ then $\parallel f'(\sum_{0<j\leq n}a_je_j)\parallel\leq 1+\varepsilon'$.
If $\parallel f'(\sum_{0<j\leq n}a_je_j)\parallel \leq 1$ then, again, 
$\parallel f'(\sum_{0<j\leq n}a_je_j)\parallel\leq 1+\varepsilon'$. We have proved that $\parallel f'\parallel\leq 1+\varepsilon'$.
Hence $f''= \frac{1}{1+\varepsilon'} f'$ is a morphism in $\Ban$.

For $a=\sum_{0<j\leq n}a_je_j$ we have
$$
\parallel  f'a - f''a\parallel = \frac{1}{1+\varepsilon'} \parallel (1+\varepsilon')f'a - f'a)\parallel = \frac{\varepsilon'}{1+\varepsilon'}\parallel f'a\parallel\leq\frac{\varepsilon'}{1+\varepsilon'}(1+\varepsilon')\parallel a\parallel=\varepsilon'\parallel a\parallel.
$$
Hence $\parallel f'-f''\parallel\leq\varepsilon'$ and thus $\parallel k_if'-k_if''\parallel\leq\varepsilon'$. Since
$\parallel k_if' - f\parallel\leq\varepsilon'$, we have $\parallel k_if'' - f\parallel\leq\varepsilon$.

 
II. Every morphism $f:A\to K_\mu$ has a decomposition 
$$
A\to f[A]\to K_\mu
$$
where $f_1:A\to f[A]$ is an  epimorphism and $f_2: f[A]\to K_\mu$ is an isometry. Since $f[A]$ is a finite-dimensional Banach space
and $f_2$ is an isometry, following I. we have a morphism $f'_2:f(A)\to K_i$ such that $\pa k_if'_2 - f_2\pa\leq\eps$. Now, for
$f'=f'_2f_1$ we have $\pa k_if' - f\pa\leq\eps$.
\end{proof}

\begin{rem}\label{apgen2}
{
\em
(1) An example of an approximately $\aleph_0$-generated Banach space which is not
finite-dimensional is given in \cite[Example 6.4]{DR}.

Following \cite{RT} 6.5(2), every approximately $\aleph_0$-generated Banach space $A$ admits for every $\varepsilon>0$ 
an $\varepsilon$-split morphism $u:A\to B$ to a finite-dimensional Banach space $B$. This means that there exists $r:B\to A$ such
that $ru\sim_{\eps}\id_A$. Moreover, $r$ can be taken as an isometry.

Conversely, every Banach space $A$ with morphisms $r$ and $u$ as above is approxi\-ma\-tely $\aleph_0$-generated. Indeed, let 
$(k_{ij}:K_i\to K_j)_{i\leq j\leq\mu}$ a transfinite composite of isometries and let $f:A\to K_\mu$ and $\eps>0$ be given. There is an $\frac{\varepsilon}{2}$-split morphism $u:A\to B$ with $B$ finite-dimensional. Since $B$ is approximately $\aleph_0$-generated, there is  $f':B\to K_i$ for some $i<\mu$ such that 
$k_if'\sim_{\frac{\varepsilon}{2}}fr$ (where $r$ $\eps$-splits $u$). Hence $k_if'u\sim_{\frac{\varepsilon}{2}}fru$ and 
$fru\sim_{\frac{\varepsilon}{2}}f'$. Therefore $k_if'\sim_\varepsilon f$.

(2) Every approximately $\aleph_0$-generated Banach space $A$ is separable. Indeed, for every $n>0$ there is a $\frac{1}{n}$-split morphism $u_n:A\to B_n$ to a finite-dimensional Banach space $B_n$. Let $X_n$ be a countable dense set in $B_n$. Then for
$r_n:B_n\to A$ which $\frac{1}{n}$-splits $u_n$ we conclude that $\bigcup\limits_{n>0}r_n(X_n)$ is a countable dense set in $A$.



}
\end{rem}

\begin{rem}
{
\em
$\Ban$ is \textit{isometry-locally finitely generated} as a $\CMet$-enriched category (cf. \cite{AR1}). It suffices to combine \ref{fin-dim} and \ref{small:iso}(3) with the fact that every Banach space is a directed colimit of finite-dimensional Banach spaces and isometries.
}
\end{rem}

\begin{rem}\label{gur}
{
\em 
(1) \textit{Approximately $\aleph_0$-saturated} objects were defined in \cite{RT} 6.6. as objects approximately injective w.r.t.
isometries between approximately $\aleph_0$-generated objects in the category $\ck_{\iso}$ of $\ck$-objects and isometries.
This means that in \ref{ain} $\ch$ consists of isometries between approximately $\aleph_0$-generated objects and $f,f'$ are isometries.

(2) An approximately $\aleph_0$-saturated object in $\Ban$ is approximately injective w.r.t. isometries between approximately $\aleph_0$-generated Banach spaces. It suffices to take an arbitrary $f:A\to X$ from \ref{ain}, factorize it as $f=f_2f_1$ where $f_2:A_0\to X$
is an isometry and $A_0$ is finite-dimensional and take the following pushout 
$$
\xymatrix@=3pc{\ar@{} [dr]|{}
A \ar [r]^{h} \ar[d]_{f_1} &  A'\ar[d]^{\overline{f}_1}\\
A_0\ar[r]_{\overline{h}} &A'_0
}
$$
Then for $f_2:A_0\to X$, we take $f'_2:A'_0\to X$ such that $f'_2\overline{h}\sim_{\eps}f_2$. We conclude 
$f'_2\overline{f}_1h\sim_{\eps}f$.

(3) Banach spaces of almost universal disposition for finite dimensional Banach spaces can be defined as Banach spaces approximately injective w.r.t.
isometries between finite-dimensional Banach spaces in the category $\Ban_{\iso}$. The definition is formulated differently in \cite{ASCGM}, but its equivalence with the above condition is shown in \cite{K}.

Every approximately $\aleph_0$-saturated Banach space is clearly of almost universal disposition for finite dimensional Banach spaces. 

(4) Gurarii constructed a separable Banach space of almost universal disposition for finite dimensional Banach spaces, \cite{Gu}. This space is unique up to isomorphism (see \cite{KS}) and is called the \textit{Gurarii space}. On the other hand, there exists no separable Banach space injective w.r.t. isometries between finite-dimensional Banach spaces in $\Ban$ (see \cite{ASCGM}, 3.10).  

In \cite{RT} 6.7, a separable $\aleph_0$-saturated Banach space was obtained as $\bar{r}_{K_0}=\bar{m}_{0\omega}:K_0\to\bar{K}_0$ (from \ref{soa}) where $K_0=0$ is the null space. It could also be obtained as $r_{K_0}=m_{0\omega}:K_0\to\widehat{K}_0$ from \ref{T:ref} and \ref{class}. This space is the Gurarii space and thus the Gurarii
space is approximately $\aleph_0$-saturated.



}
\end{rem}

We are going to prove the uniqueness of a Gurarii space based on the results above, but using the approximate back-and-forth method of \cite{KS}. 

\begin{lemma}\label{e-iso4}
Let $K$ be an approximately $\aleph_0$-saturated Banach space. Given an $\varepsilon$-isometry $h:A\to A'$ in $\Ban$ between 
finite-dimensional Banach spaces and an isometry $f:A\to K$, then for every $\delta>0$ there is an isometry  
$f':A'\to K$ in $\Ban$ such that $f'h\sim_{\eps+\delta} f$.
\end{lemma}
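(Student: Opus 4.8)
The plan is to reduce the statement to the defining approximate injectivity of $K$ by replacing the $\varepsilon$-isometry $h$ with a genuine isometry into a finite-dimensional ``common envelope''. First I would use that $h\colon A\to A'$ is an $\varepsilon$-isometry between finite-dimensional Banach spaces: by Definition~\ref{e-iso1} there are isometries $g\colon A'\to C$ and $k\colon A\to C$ with $gh\sim_\varepsilon k$, and by Remark~\ref{e-iso3a} the space $C$ may be taken finite-dimensional.

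Next I would invoke the approximate $\aleph_0$-saturation of $K$. Being finite-dimensional, $A$ and $C$ are approximately $\aleph_0$-generated (\ref{fin-dim}), so $k\colon A\to C$ is an isometry in the class with respect to which an approximately $\aleph_0$-saturated object is approximately injective in the category $\Ban_{\iso}$; recall from \ref{gur}(1) that here both the given morphism and its $\delta$-factorization are required to be isometries. Applying this to the isometry $f\colon A\to K$ yields, for the given $\delta>0$, an isometry $\bar f\colon C\to K$ with $\bar f k\sim_\delta f$.

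Finally I would set $f':=\bar f\cdot g\colon A'\to K$. As a composite of isometries it is an isometry (\ref{isometry}(3)), and since composition with the nonexpanding map $\bar f$ does not increase distances, the triangle inequality gives
$$
d(f'h,f)\ \le\ d(\bar f\,gh,\bar f\,k)+d(\bar f\,k,f)\ \le\ d(gh,k)+\delta\ \le\ \varepsilon+\delta,
$$
that is, $f'h\sim_{\varepsilon+\delta}f$, as required.

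I do not expect a genuine obstacle; the argument is essentially a bookkeeping of the two error terms $\varepsilon$ and $\delta$. The one point that needs care is the very first step, namely that the witnessing space $C$ of Definition~\ref{e-iso1} can be chosen finite-dimensional, so that $k$ really lies in the class relative to which $K$ is saturated. This is exactly Remark~\ref{e-iso3a}, which in turn rests on the $\varepsilon$-pushout of \ref{e-iso2} between finite-dimensional spaces being finite-dimensional. Once this is in hand, only closure of isometries under composition and the triangle inequality are used.
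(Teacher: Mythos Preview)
Your proof is correct and follows essentially the same route as the paper: obtain a finite-dimensional common envelope via \ref{e-iso3a}, apply the saturation of $K$ to the isometry into that envelope, and compose. The only difference is notational, and your version spells out the triangle-inequality estimate and the citations (\ref{fin-dim}, \ref{isometry}(3)) more explicitly than the paper does.
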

\begin{proof}
Following \ref{e-iso3a}, there are isometries $u:A'\to B$ and $v:A\to B$ where $B$ is finite-dimensional such that 
$uh\sim_{\eps}v$. Given $\delta>0$, there is an isometry $f'':C\to K$ such that $f''v\sim_\delta f$. 
Let $f'=f''u$. Then $f'h\sim_{\varepsilon + \delta}f$.
\end{proof}
 
\begin{propo}
Any two separable approximately $\aleph_0$-saturated Banach spaces are isomorphic.
\end{propo}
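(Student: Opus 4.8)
The plan is to run the approximate back-and-forth of \cite{KS}, but organised so as to realise \emph{both} given spaces as a single (completed) colimit of finite-dimensional Banach spaces; this sidesteps the usual perturbation bookkeeping. Call the two spaces $K$ and $L$ and, using separability, fix dense sequences $(x_k)_{k\geq 1}$ in $K$ and $(y_k)_{k\geq 1}$ in $L$. First I would build a chain
$$
A_0\xrightarrow{\ h_0\ }A_1\xrightarrow{\ h_1\ }A_2\xrightarrow{\ h_2\ }\cdots
$$
of finite-dimensional Banach spaces with isometric linking maps $h_n$, together with isometries $p_n\colon A_n\to K$ and $q_n\colon A_n\to L$ such that for all $n$
$$
p_{n+1}h_n\sim_{2^{-n}}p_n,\qquad q_{n+1}h_n\sim_{2^{-n}}q_n,
$$
and arranged so that every $x_k$ lies in $p_{n(k)}[A_{n(k)}]$ for some $n(k)\to\infty$ and every $y_k$ lies in $q_{m(k)}[A_{m(k)}]$ for some $m(k)\to\infty$. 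Start with $A_0=0$ and the trivial isometries $p_0,q_0$.

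The inductive step uses approximate $\aleph_0$-saturation together with the fact that finite-dimensional spaces are approximately $\aleph_0$-generated (Proposition~\ref{fin-dim}); equivalently one may quote Lemma~\ref{e-iso4} with $\varepsilon=0$, as a $0$-isometry is an isometry. At an even (``$K$-'') stage $n$, where $x_k$ is the next point to capture, put $A_{n+1}=p_n[A_n]+\langle x_k\rangle\subseteq K$, let $h_n$ be $p_n$ corestricted to $A_{n+1}$ and let $p_{n+1}\colon A_{n+1}\hookrightarrow K$ be the inclusion, so that $p_{n+1}h_n=p_n$ and $x_k\in p_{n+1}[A_{n+1}]$; then, since $h_n$ is an isometry between finite-dimensional spaces and $L$ is approximately $\aleph_0$-saturated, choose an isometry $q_{n+1}\colon A_{n+1}\to L$ with $q_{n+1}h_n\sim_{2^{-n}}q_n$ (using \ref{ain}, \ref{gur}(1)--(2), or Lemma~\ref{e-iso4}). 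The odd (``$L$-'') stages are symmetric, interchanging the roles of $K$ and $L$ and of $p_\bullet$ and $q_\bullet$ and using $y_k$.

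Next I would pass to $A_\infty=\colim_n A_n$ in $\Ban$, which by \ref{directed}(4) is complete, has canonical isometries $a_n\colon A_n\to A_\infty$ with $a_{n+1}h_n=a_n$, and has $\bigcup_n a_n[A_n]$ dense. Since the $h_n$ are isometries and $p_{n+1}h_n\sim_{2^{-n}}p_n$, for $v\in A_m$ the sequence $\bigl(p_n h_{n-1}\cdots h_m(v)\bigr)_{n\geq m}$ has $n$-th gap of norm at most $2^{-n}\|v\|$, hence is Cauchy in $K$ with limit depending only on $a_m(v)$; this defines a linear, norm-preserving map on the dense subspace $\bigcup_n a_n[A_n]$, which extends to an isometric embedding $P\colon A_\infty\to K$ satisfying $\|P a_m(v)-p_m(v)\|\leq 2^{1-m}\|v\|$. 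Thus if $x_k=p_{n(k)}(v)$ then $\|P a_{n(k)}(v)-x_k\|\leq 2^{1-n(k)}\|x_k\|\to 0$; since the $x_k$ are dense this forces $P[A_\infty]$ dense in $K$, and being an isometric embedding of a complete space $P$ has closed image, so $P$ is an isometric isomorphism $A_\infty\cong K$. The identical argument applied to the $q_n$ yields an isometric isomorphism $Q\colon A_\infty\cong L$, and $Q\circ P^{-1}\colon K\to L$ is the desired isomorphism.

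The estimates above are routine; the one place needing care is the density step, where for a given $x\in K$ and $\eta>0$ one must pick the index $k$ large enough that both $\|x_k-x\|$ and the error $2^{1-n(k)}\|x_k\|$ are below $\eta/2$. The genuine structural point — and the reason for working with an abstract chain with linking maps rather than an increasing tower of subspaces inside $K$ and inside $L$ — is that saturation only lets one extend the partial inverse of $p_n$ across a larger subspace \emph{up to a small error}, so the successive ``preimages'' in $K$ need not literally contain the previously constructed subspaces; forming the colimit of an abstract chain makes this a non-issue, at the cost of the short convergence argument just described.
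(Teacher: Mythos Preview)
Your argument is correct and genuinely different from the paper's. The paper follows \cite{KS} more literally: it fixes colimit presentations $K=\colim K_i$ and $L=\colim L_j$ by finite-dimensional subspaces and isometric inclusions, and then zig-zags with $\tfrac{1}{n}$-isometries $f_n\colon K_{i_n}\to L_{j_n}$ and $g_n\colon L_{j_n}\to K_{i_{n+1}}$ satisfying $\|g_nf_n-k_{i_n,i_{n+1}}\|\leq\tfrac{2}{n+1}$ and $\|f_{n+1}g_n-l_{j_n,j_{n+1}}\|\leq\tfrac{2}{n+1}$; the key step is Lemma~\ref{e-iso4} in its full strength (extending an isometry across an $\eps$-isometry, which in turn rests on the $\eps$-pushout characterisation \ref{e-iso2}), and the conclusion comes from showing that $\lim_n f_n$ and $\lim_n g_n$ are mutually inverse. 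Your route sidesteps $\eps$-isometries entirely: by carrying an abstract chain $(A_n,h_n)$ with \emph{isometric} connecting maps and isometries $p_n,q_n$ into $K$ and $L$, each extension step only needs the $\eps=0$ case of Lemma~\ref{e-iso4}, i.e.\ the bare definition of approximate $\aleph_0$-saturation in $\Ban_{\iso}$. What you gain is conceptual cleanliness (both spaces are exhibited as the \emph{same} completed colimit $A_\infty$) and a lighter toolkit; what the paper's approach gains is that it stays inside the given spaces throughout and makes the convergence $f=\lim f_n$, $g=\lim g_n$ immediate without the separate density-plus-closed-image argument you need for $P$ and $Q$. Both are legitimate implementations of the approximate back-and-forth, and your final paragraph correctly identifies why the abstract-chain formulation is forced once one insists on genuine isometries at every stage.
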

\begin{proof}
Let $K$ and $L$ be separable approximately $\aleph_0$-saturated Banach spaces. Both $K$ and $L$ are colimits of an $\omega$-chain of 
finite-dimensional Banach spaces and linear isometries: $K = \colim\limits_{i<\omega} K_i$ and $L = \colim\limits_{j<\omega} L_j$ 
where $K_0=L_0=0$ and with the connecting morphisms $k_{i_1i_2}:K_{i_1}\to K_{i_2}$ and $l_{j_1j_2}:L_{j_1}\to L_{j_2}$.
We will produce increasing sequences $i_1<\dots i_n<\dots$ and $j_0=0 < j_1<\dots j_n<\dots$, together with $\frac{1}{n}$-isometries $f_n:K_{i_n}\to L_{j_n}$, $n>0$ in $\Ban$ and $\frac{1}{n+1}$-isometries $g_n:L_{j_n}\to K_{i_n+1}$, $n\geq 0$ in $\Ban$ such that for every $n>0$ we have 
\begin{equation}
\parallel g_nf_n -k_{i_n,i_{n+1}}\parallel\leq\frac{2}{n+1}\tag{$\ast$}
\end{equation}
and
\begin{equation}
\parallel f_{n+1}g_n -l_{j_n,j_{n+1}}\parallel\leq\frac{2}{n+1}\tag{$\ast\ast$}.
\end{equation}
We proceed by induction, $g_0=\id_0: 0\to 0$. Assume that we have $g_n$. Following \ref{e-iso4}, there is an isometry $t:K_{i_n}\to L$ such that 
$$
\parallel t g_n - l_{j_n}\parallel\leq\frac{1}{n+1}.
$$
There is an index $j_{n+1}>j_n$ for which we have a morphism $f_{n+1}:K_{i_{n+1}}\to L_{j_{n+1}}$ such that 
$\pa l_{j_{n+1}}f_{n+1}-t\pa\leq{\frac{1}{n+1}}$. Following \ref{e-iso1}, $f_{n+1}$ is an $\frac{1}{n+1}$-isometry. Moreover,
$$
\pa l_{j_{n+1}}f_{n+1}g_n - l_{j_{n+1}}l_{j_nj_{n_1}}\pa=\pa l_{j_{n+1}}f_{n+1}g_n -tg_n +tg_n- l_{j_{n+1}}l_{j_nj_{n_1}}\pa
\leq \frac{1}{n+1} + \frac{1}{n+1}.
$$
Hence we have $(\ast\ast)$.
 
Next, assume that we have $f_n$. Following \ref{e-iso4}, there is an $\frac{1}{n+1}$-isometry $t:L_{j_n}\to K$ such that
$$
\parallel t f_n - k_{i_n}\parallel\leq\frac{1}{n+1}.
$$
There is an index $i_{n+1}>i_n$ for which we have a morphism $g_n:L_{j_n}\to K_{i_{n+1}}$ such that $\pa k_{i_{n+1}}g_n-t\pa\leq{\frac{1}{n+1}}$. Following \ref{e-iso1}, $g_n$ is an $\frac{1}{n+1}$-isometry. Moreover,
$$
\pa k_{i_{n+1}}g_nf_n - k_{i_{n+1}}k_{i_ni_{n_1}}\pa=\pa k_{i_{n+1}}g_nf_n -tf_n +tf_n- k_{i_{n+1}}k_{i_ni_{n_1}}\pa
\leq \frac{1}{n+1} + \frac{1}{n+1}.
$$
Hence we have $(\ast)$.
 
Then the morphisms $f=\lim_nf_n:K\to L$ and $g=\lim_ng_n:L\to K$ are mutually inverse. In more detail, every $x\in K$ is a limit $x=\lim_n x_n$
where $x_n\in K_{i_n}$. Then $fx = \lim_n f_nx_n$. Similarly, we define $g$. 

Hence $K\cong L$.
\end{proof}


\begin{thebibliography}{ASCGM}

\itemsep=2pt
\bibitem{AHSo} J. Ad\'{a}mek, M. H\'{e}bert and L. Sousa, {\em A logic of injectivity}, J. Homotopy Th. Rel. Struct. 2(2007), 13--47.

\bibitem{AHRT} J. Ad\'amek, H. Herrlich, J. Rosick\'y and W. Tholen, {\em On a generalized small object argument for the injective subcategory problem}, Cah. Top. G\'eom. Diff. Categ. XLIII (2002), 93-106.

\bibitem{AHS} J. Ad\'{a}mek, H. Herrlich and G.E. Strecker, {\em Abstract and Concrete Categories}, 3$^{\mbox{rd}}$ edition, Dover publ. 2009, Mineola, New York 
 
\bibitem{AR93} J. Ad\'{a}mek and J. Rosick\'{y}, {\em On injectivity in locally presentable categories}, Transactions Amer. Math. Soc. 336 (1993), 785-804.
  
\bibitem{AR} J. Ad\'amek and J. Rosick\'y, {\em Locally Presentable and Accessible Categories}, Cambridge Univ. Press 1994.
 
\bibitem{AR1} J. Ad\'amek and J. Rosick\'y, What are locally generated categories?, in Proc. Categ. Conf. Como  1990, Lect. Notes in Math. 1488 (1991), 14-19. 
          
\bibitem{ASCGM} A. Avil\'es, F. C. S\'anches, J. M. F. Castillo, M. Gonz\'ales and Y. Moreno, {\em Separably Injective Banach Spaces}, Lect. Notes in Math., Springer 2018.

\bibitem{BY} I. Ben-Yaacov, {\em Fra\"iss\'e limits of metric structures}, J. Symb. Logic 80 (2015), 100-115.

\bibitem{B} F. Borceux, {\em Handbook of Categorical Algebra}, Vol. 2, Cambridge Univ. Press 1994.

\bibitem{DR} I. Di Liberti and  J. Rosick\'y, {\em Enriched locally generated categories}, arXiv:2009.10980. 

\bibitem{G} J. Garbuli\'nska-W\c egrzyn, {\em Isometric uniqueness of a complementably universal Banach space for Schauder decompositions}, Banach J. Math. Anal. 8 (2014), 211-220.

\bibitem{GK} J. Garbuli\' nska and W. Kubi\'s, {\em Remarks on Gurarii spaces}, Exctracta Math. 26 (2011/2012), 235-269.

\bibitem{GK1} J. Garbuli\' nska-W\c egrzyn and W. Kubi\'s, {\em A universal operator on the Gurarii space}, Operator Theory 73 (2015), 143-158.

\bibitem{Gu} V. I. Gurarii, {\em Spaces of universal placement. isotropic spaces and a problem of Mazur on rotations of Banach spaces}, Sibirsk. Mat. \v Z 7 (1966), 1002-1013
(in Russian).

\bibitem{HH} A. Hirvonen and T. Hyttinen, {\em Categoricity in homogeneous complete metric spaces}, Arch. Math. Log. 48 )2009),
269-322.

\bibitem{HU} D. H. Hyers and S. M. Ulam, {\em On approximate isometries}, Bull. AMS 51 (1945), 288-292.

\bibitem{Ke} G. M. Kelly, {\em Structures defined by finite limits in the enriched context, I}, Cahiers Topologie G\'eom. Diff\'erentielle Cat\'egoriques 23 (1982),  3-42.

\bibitem{K1} W. Kubi\'s, {\em Metric enriched categories and approximate Fraiss\'e limits}, arXiv:1210.6506.

\bibitem{K} W. Kubi\'s, {\em Game-theoretic characterization of the Gurarii space}, Archiv Math. 110 (2018), 53-59.

\bibitem{KS} W. Kubi\'s and S. Solecki, {\em A proof of uniqueness of the Gurarii space}, Israel J. Math. 195 (2013), 449-456 
 
\bibitem{LaR} S. Lack and J. Rosick\'y, {\em Enriched weakness}, J. Pure Appl. Algebra 216 (2012), 1807-1822.

\bibitem{L} M. Lupini, {\em Fra\"iss\'e limits in functional analysis}, Adv. Math. 338 (2018), 93-174.

\bibitem{LR} M. Lieberman and J. Rosick\'y, {\em Metric abstract elementary classes as accessible categories}, J. Symb. Logic. 82 (2017), 1022-1040.

\bibitem{MPP} R. Mardare, P. Panangaden and G. Plotkin, {\em Quantitative algebra reasoning}, In Proc. LICS (2016), 700-709.
 
\bibitem{RT} J. Rosick\'y and W. Tholen, {\em Approximate injectivity}, Appl. Cat. Struct. 26 (2018), 699-716.

\bibitem{SGK} F. C. S\'anchez, J. Garbuli\'nska-Wegrzyn and W. Kubi\'s, {\em Quasi-Banach spaces of almost universal disposition}, 
J. Funct. Anal. 267 (2014), 744-771.

 \bibitem{W} T. Wilson, {\em Weak Vop\v enka's principle does not imply Vop\v enka's principle}, Adv. Math. 363 (2020).\end{thebibliography}
\end{document}